\newcommand{\cip}{\stackrel{\P}{\rightarrow}}
\newcommand{\eid}{\stackrel{d}{=}}
\newcommand{\x}{\mathbf{x}}
\newcommand{\y}{\mathbf{y}}
\definecolor{darkblue}{rgb}{.2, 0.2,.8}
\definecolor{darkgreen}{rgb}{0,0.5,0.3}
\definecolor{darkred}{rgb}{.8, .1,.1}
\newtheorem{lemma}{Lemma}[section]
\newtheorem{theorem}[lemma]{Theorem}
\newtheorem{proposition}[lemma]{Proposition}
\newtheorem{definition}[lemma]{Definition}
\newtheorem{corollary}[lemma]{Corollary}
\newtheorem{example}[lemma]{Example}
\newtheorem{exercise}[lemma]{Exercise}
\newtheorem{remark}[lemma]{Remark}
\newtheorem{fig}[lemma]{Figure}
\newtheorem{tab}[lemma]{Table}
\newcommand{\cid}{\stackrel{d}{\rightarrow}}
\newcommand{\bth}{\begin{theorem}}
\newcommand{\ethe}{\end{theorem}}
\newcommand{\bre}{\begin{remark}\em }
\newcommand{\ere}{\end{remark}}
\newcommand{\ble}{\begin{lemma}}
\newcommand{\ele}{\end{lemma}}
\newcommand{\bde}{\begin{definition}}
\newcommand{\ede}{\end{definition}}
\newcommand{\bco}{\begin{corollary}}
\newcommand{\eco}{\end{corollary}}
\newcommand{\bpr}{\begin{proposition}}
\newcommand{\epr}{\end{proposition}}
\newcommand{\bexer}{\begin{exercise}}
\newcommand{\eexer}{\end{exercise}}
\newcommand{\bexam}{\begin{example}}
\newcommand{\eexam}{\end{example}}
\newcommand{\bfi}{\begin{fig}}
\newcommand{\efi}{\end{fig}}
\newcommand{\btab}{\begin{tab}}
\newcommand{\etab}{\end{tab}}
\newcommand{\Var}{\operatorname{Var}}
\newcommand{\Cov}{\operatorname{Cov}}
\newcommand{\Corr}{\operatorname{Corr}}
\newcommand{\rhs}{right-hand side}
\newcommand{\df}{distribution function}
\newcommand{\beao}{\begin{eqnarray*}}
\newcommand{\eeao}{\end{eqnarray*}\noindent}
\newcommand{\beam}{\begin{eqnarray}}
\newcommand{\eeam}{\end{eqnarray}\noindent}
\newcommand{\beqq}{\begin{equation}}
\newcommand{\eeqq}{\end{equation}\noindent}
\newcommand{\bce}{\begin{center}}
\newcommand{\ece}{\end{center}}
\newcommand{\barr}{\begin{array}}
\newcommand{\earr}{\end{array}}
\newcommand{\vague}{\stackrel{\lower0.2ex\hbox{$\scriptscriptstyle
                    \it{v} $}}{\rightarrow}}
\newcommand{\weak}{\stackrel{\lower0.2ex\hbox{$\scriptscriptstyle
                    \it{w} $}}{\rightarrow}}
\newcommand{\what}{\stackrel{\lower0.2ex\hbox{$\scriptscriptstyle
                    \it{\hat{w}} $}}{\rightarrow}}
\newcommand{\bdis}{\begin{displaymath}}
\newcommand{\edis}{\end{displaymath}\noindent}
\newcommand{\N}{\mathbb{N}}
\newcommand{\R}{\mathbb{R}}
\newcommand{\nto}{n\to\infty}
\newcommand{\ov}{\overline}
\newcommand{\vep}{\varepsilon}
\newcommand{\st}{such that}
\newcommand{\seq}{sequence}
\newcommand{\bfI}{{\bf I}}
\newcommand{\E }{{\mathbb E}}
\renewcommand{\P }{{\mathbb P}}
\newcommand{\1}{\mathds{1}}
\DeclareMathOperator{\e}{e}
\begin{document}
\bibliographystyle{acm}
\title[Interpoint distances]{Maximum interpoint distance of high-dimensional random vectors}
\thanks{Johannes Heiny's and Carolin Kleemann's research was partially supported by grant VR-2023-03577 ``High-dimensional extremes and random matrix structures'' and by  RTG 2131 High-dimensional Phenomena in Probability -- Fluctuations and Discontinuity. We thank Peter Eichelsbacher for fruitful discussions. The authors acknowledge the feedback and constructive comments of 3 anonymous reviewers and an associate editor which led to an improved version of this paper.}

\author[J. Heiny]{Johannes Heiny}
\address{Department of Mathematics,
Stockholm University,
Albano hus 1,
10691 Stockholm,
Sweden}
\email{johannes.heiny@math.su.se}
\author[C. Kleemann]{Carolin Kleemann}
\address{Fakult\"at f\"ur Mathematik,
Ruhruniversit\"at Bochum,
Universit\"atsstrasse 150,
D-44801 Bochum,
Germany}
\email{carolin.kleemann@rub.de}

\begin{abstract}
A limit theorem for the largest interpoint distance of $p$ independent and identically distributed  points in $\R^n$ to the Gumbel distribution is proved, where the number of points $p=p_n$ tends to infinity as the dimension of the points $n\to\infty$. The theorem holds under moment assumptions and corresponding conditions on the growth rate of $p$. We obtain a plethora of ancillary results such as the joint convergence of  maximum and minimum interpoint distances. Using the inherent sum structure of interpoint distances, our result is generalized to maxima of dependent random walks with non-decaying correlations and we also derive point process convergence. An application of the maximum interpoint distance to testing the equality of means for high-dimensional random vectors is presented. Moreover, we study the largest off-diagonal entry of a sample covariance matrix.
The proofs are based on the Chen-Stein Poisson approximation method and Gaussian approximation to large deviation probabilities. 
\end{abstract}
\keywords{Maximum under dependence, high dimension, Gumbel distribution, extreme value theory, $p$-norms,  independence test}
\subjclass{Primary 60G70; Secondary 60G50, 60F10, 60B12}
\maketitle

\section{Introduction}\label{sec:intro}

In this paper we study the asymptotic distribution of the largest interpoint distance
\begin{align*}
	M_{n,p}:=\max_{1\leq i<j\leq p}\Vert\x_i-\x_j\Vert_2\,,
\end{align*}
where $\x_1,\x_2,\ldots, \x_p$ are random vectors in $\R^n$ and $\Vert\cdot\Vert_2$ denotes the Euclidean norm on $\R^n$.  
Interpoint distances are used in a wide range of applications in many areas of probability and statistics, for example
in distributional characterization, classification, independence testing and cluster analysis \cite{li:2018}. Thanks to their simplicity of computation and straightforward geometric interpretation, interpoint distance-based procedures have been particularly appealing to practitioners for analyzing data samples.

Several limit theorems for the largest interpoint distance $M_{n,p}$ of independent and identically distributed (iid) random vectors $\x_1,\ldots,\x_p$ with a fixed dimension $n$ have been proved. 
Typically a distinction is made between bounded and unbounded support of the distribution of the points $\x_i$.
For instance, if the points are uniformly distributed on the two-dimensional unit ball, we can see in Figure~\ref{fig:Bild1} that for a growing number of points, that is $p\to \infty$, the largest interpoint distance $M_{n,p}$ converges to the diameter of the unit ball. Regarding the maximum interpoint distance as the diameter of the convex hull of $p$ independent points, Mayer and Molchanov \cite{mayer:molchanov:2007} and  Lao and Mayer \cite{lao2008Umax} obtained a Weibull distribution as the limiting law of the suitably centered and normalized $M_{n,p}$ in case of points distributed on the $n$-dimensional unit ball (including the uniform distribution). For bounded support and fixed dimension~$n$, Appel et al.\ \cite{appel2002limit} found a limiting distribution for $M_{n,p}$ in the case of uniformly distributed points in a compact set with a well defined major axis and a suitable decay rate at the endpoints.
For a uniform distribution in a proper ellipse with major axis $2$, Jammalamadaka and Janson \cite{jammalamadaka:janson:2015} gave a limiting law for $M_{n,p}$, which involved two independent Poisson processes. This result was generalized by Schrempp \cite{schrempp:2016} to uniform or nonuniform distributions over an $n$-dimensional ellipsoid.

If the support of the $\x_i$ is unbounded, then one single observation/outlier might cause $M_{n,p}$ to be large. An example of a distribution with unbounded support is given in Figure~\ref{fig:Bild2} which shows a cloud of bivariate standard normal distributed points.
In the unbounded case, Matthews and Rukhin \cite{matthews:rukhin:1993} obtained a Gumbel limiting distribution if the points follow a spherical symmetric normal distribution. Henze and Klein \cite{henze:klein:1996}, Jammalamadaka and Janson \cite{jammalamadaka:janson:2015} and Demichel et al.\ \cite{demichel:fermin:soulier:2015} generalized this result to any spherically symmetric distribution.
Complementary to these developments, Jammalamadaka and Janson \cite{jammalamadaka:janson:1986} obtained a limiting distribution for the minimum interpoint distance by considering the asymptotic distribution of a triangular scheme of $U$-statistics. The minimum interpoint distance is usually attained by points in the bulk of the distribution, whereas the maximum interpoint distance is achieved by outliers. Therefore, $M_{n,p}$ is less suitable for goodness of fit tests, but could be used to identify outliers.

\begin{figure}[h] 
	\centering
	\subfloat[10 points]{ \includegraphics[width=0.45\textwidth]{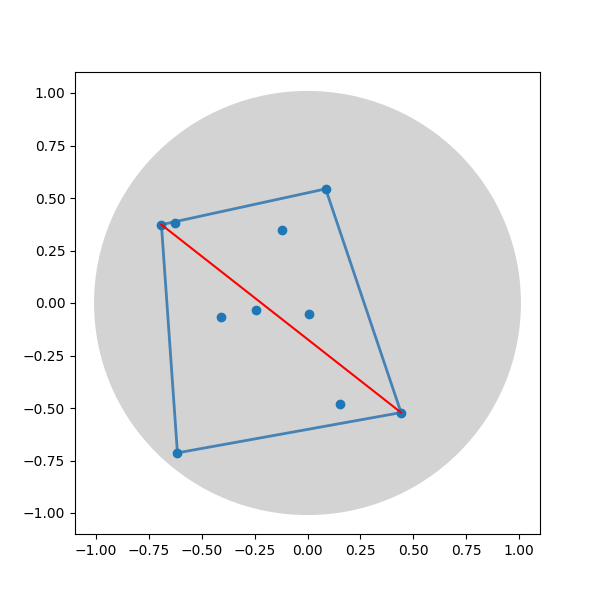}}
	\qquad
	\subfloat[250 points]{ \includegraphics[width=0.45\textwidth]{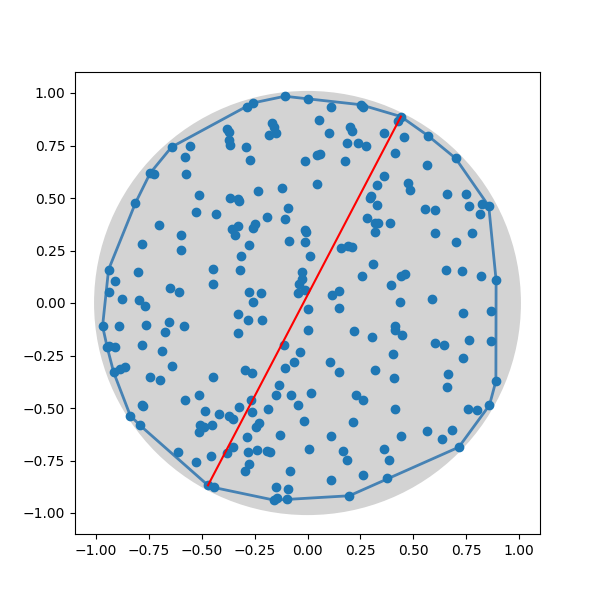}}   
	\caption{Uniformly distributed points on the two-dimensional unit ball}
	\label{fig:Bild1}
\end{figure}

In all of these works, the dimension $n$ is assumed to be fixed. 
Recent technological advances such as the rapid improvement of computing power and measurement devices, however, have greatly facilitated the collection of high-dimensional data.
Huge data sets arise naturally in genome sequence data in biology, online networks, wireless communication, large financial portfolios, and natural sciences. More applications where the dimension $n$ might be of the same or even higher magnitude than the sample size $p$ are discussed in \cite{johnstone:2001,donoho:2000}. In such a high-dimensional setting, one faces new probabilistic and statistical challenges; see \cite{johnstone:titterington:michael:2009} for a review.
Since interpoint distances can be easily computed in any dimension, they provide a promising approach to analyzing high-dimensional data; see \cite{zhu:shao:2021}.

\begin{figure}
	\centering
	\includegraphics[width=0.45\textwidth]{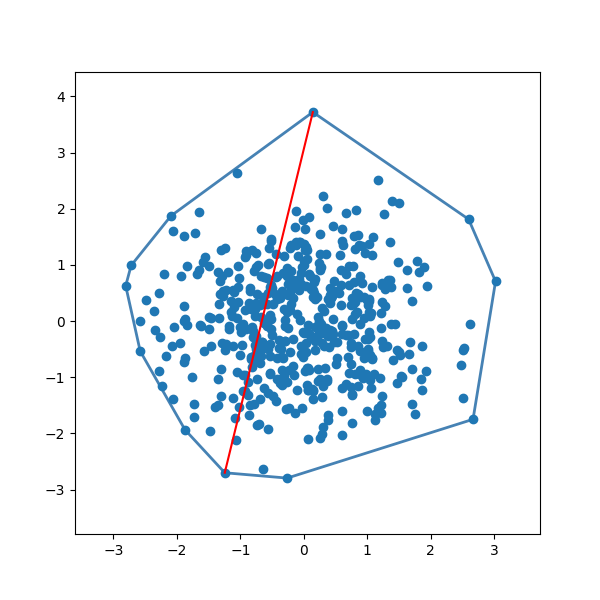}  
	\caption{500 normal distributed points on $\R^2$}
	\label{fig:Bild2}
\end{figure}

\subsection{Objective and structure of this paper}
Unfortunately, in the case of large data the techniques developed for the case of fixed $n$ do not work anymore. Our main objective is, therefore, to prove limit theorems for the largest interpoint distance $M_{n,p}$ in the high-dimensional case, where $p=p_n\to\infty$ as $n\to\infty$. 

This paper is structured as follows. The main results on the convergence of the maximum are presented in Section \ref{sec:2}. Theorem~\ref{thm:interpoint} asserts that after suitable centering and normalization the largest interpoint distance $M_{n,p}$ converges in distribution  to a standard Gumbel random variable in the high-dimensional regime $p,n\to \infty$. The correlation between the interpoint distances can be expressed in terms of the fourth moment of the entries of the vectors $\x_i$. Interestingly, it turns out that the fluctuations of $M_{n,p}$ might be influenced by this correlation, whereas the first order behavior of $M_{n,p}$ is not (Theorem~\ref{thm:firstord}). 

Theorem~\ref{thm:interpoint} is obtained from the analysis of dependent random walks in Section~\ref{sec:2.2}. In Theorem~\ref{thm:summax}, it is shown that the maximum of these random walks is asymptotically Gumbel distributed under various types of moment assumptions and corresponding growth rates of $p$. We obtain a plethora of ancillary
results such as the joint convergence of maximum and minimum interpoint distances (Theorem~\ref{cor:minimaxi}).

Section \ref{sec:applications} is devoted to geometrical and statistical applications of our findings. First, we generalize the result for the interpoint distances regarding the Euclidean norm to the maximum interpoint distance regarding $q$-norms in Theorem~\ref{thm:qnorm}. Then we propose a test for the equality of means for high-dimensional random vectors based on interpoint distances. In Theorem~\ref{thm:3.3} we show the consistency of this test under the null hypothesis of equal mean vectors and that significant deviations from the null hypothesis will be detected. Section~\ref{sec:3.3} contains an application to maximum-type tests which have gained significant popularity in high-dimensional data analysis. In particular, we study the asymptotic behavior of the largest off-diagonal entry of a sample covariance matrix of iid random vectors from an equicorrelated normal population (Theorem \ref{thm:sample}). 

Finally, we prove in Section \ref{sec:pointproc} that the convergence of the maximum of the dependent random walks can be extended to point process convergence to some Poisson random measure. Among other interesting consequences, this yields the joint distribution of a fixed number of upper order statistics. 
The proof of Theorem \ref{thm:summax} is presented in Section~\ref{sec:mainproof}, while the proofs of the remaining results in Section~\ref{sec:2} are deferred to Section~\ref{sec:otherproofs}. 
In the \hyperref[sec:appendix]{Appendix} 
we collect some useful technical tools.

\subsection{Notation}
Convergence in distribution (resp.\ probability) is denoted by $\cid$ (resp.\ $\cip$) and unless explicitly stated otherwise all limits are for $\nto$. For sequences $(a_n)_n$ and $(b_n)_n$ we write $a_n=O(b_n)$ if $a_n/b_n\leq C$ for some constant $C>0$ and every $n\in\N$, and $a_n=o(b_n)$ if $\lim_{n\to\infty} a_n/b_n=0$. Additionally, we use the notation
$a_n\sim b_n$ if $\lim_{n\to\infty} a_n/b_n=1$ and $a_n\lesssim b_n$ if $a_n$ is smaller than or equal to $b_n$ up to a positive universal multiplicative constant. We further write $a\wedge b:=
  \min \{a,b\}$ for $a,b\in\R$ and for a set $A$ we denote $|A|$ as the number of elements in $A$.

\section{Main results: convergence of the maximum}\label{sec:2}\setcounter{equation}{0}

\subsection{Maximum interpoint distance}\label{sec:maxinterpoint}

We are interested in the limit behavior of the maximum of the interpoint distances,
\begin{equation}\label{eq:defDij}
	D_{ij}^{(2)}:= \| \x_i -\x_j \|^2_2\,, \qquad 1\le i <j\le p\,,
\end{equation} 
where $\x_i=(X_{i1}, \ldots, X_{in})^\top, i=1,\ldots,p$ are $n$-dimensional random vectors, whose components\linebreak $(X_{it})_{i,t\ge 1}$ satisfy the following {\em standard conditions}:
\begin{itemize}
	\item $(X_{it})$ are independent and identically distributed random variables with generic element~$X$.
	\item $\E[X]=0$ and $\E[X^2]=1$. 
\end{itemize}
It is worth mentioning that this is a non-standard extreme value problem since the maximum interpoint distance is a max $U$-statistic. Consequently, the limiting distribution might not necessarily be an extreme value distribution. Further, notice that the mean of the random vectors $\x_i$ has no impact on the distance between the vectors, so we assume it to be zero for simplicity.

In this paper, $p=p_n$ is some integer sequence tending to infinity as $\nto$.
For $\tilde{p}:=p(p-1)/2$ and $y\in\R$ we define
\begin{align}\label{dn1}
	d_{n,1}:=\sqrt{2\log \tilde{p}} - \dfrac{\log\log \tilde{p}+\log 4\pi}{2(2\log \tilde{p})^{1/2}}\qquad\text{and}\qquad
	d_{n}^{(y)}:=d_{n,1}-\frac{y\log \tilde{p}}{3n^{1/2}}\,, \quad n\ge 1.
\end{align}
These sequences will also be used for the appropriate centralization and scaling of $\max_{i<j} D_{ij}^{(2)}$ with the following heuristic explanation. By the central limit theorem (assuming $\E[X^4]<\infty$), the distribution function of 
$\big(D_{12}^{(2)}-\E[D_{12}^{(2)}]\big)/\sqrt{\Var D_{12}^{(2)}}$ converges, as $\nto$, to the standard normal distribution function $\Phi$, where for $i<j$ we have $\E[D_{ij}^{(2)}]= 2n$ and $\Var(D_{ij}^{(2)})=2n(\E[X^4]+1)$.
For an iid \seq\ $(\xi_i)$ of standard normal random variables and $d_{n,1}$ defined as in \eqref{dn1} it holds 
\begin{equation*}
	\lim_{\nto} \P\Big(d_{n,1} \Big(\max_{i=1,\ldots,\tilde{p}} \xi_i-d_{n,1}\Big)\le x\Big)=\exp(-\exp(-x))=:\Lambda(x)\,,
	\qquad x\in\R\,. 
\end{equation*}
The limit \df\ is the standard Gumbel $\Lambda$; see \cite[Example~3.3.29]{embrechts:kluppelberg:mikosch:1997}.
Note that the \seq\ 
$(d_{n,1})$ is chosen \st\  $\tilde{p}\,\ov \Phi(d_{n,1}) \to 1$ as $n\to\infty$,
where $\ov \Phi(x):=1-\Phi(x)$.
Of course $D_{ij}^{(2)}, 1\le i<j\le p$ are not independent random variables. In particular, we have constant correlations
\begin{equation}\label{corr}
\Corr(D_{ij}^{(2)},D_{ik}^{(2)})=\frac{\E[X^4]-1}{2(\E[X^4]+1)}\in[0,1/2)\,, \qquad i<j<k\,,
\end{equation}
and uncorrelatedness if and only if $X$ follows the symmetric Bernoulli distribution $\P(X=-1)=\P(X=1)=1/2$. For large $\tilde{p}$ (relative to the dimension $n$) we have to deal with large number of dependent interpoint distances $D_{ij}^{(2)}$, each of which satisfies a central limit theorem with convergence rate only depending on $n$ and $X$. Therefore, conditions on $X$ and the interplay of $n$ and $p$ are required for the asymptotic behavior of the maximum interpoint distance. Our techniques will rely on Poisson approximation and precise large deviation results in Lemma \ref{lin}, which connects the conditions on the moments of $X$ and the rate of $p=p_n\to \infty$. 
We will assume one of the following four moment conditions:
\begin{enumerate}
	\item[\rm (B1)]
	There exists $s>2$ \st\ $\E[|X|^{2s}(\log(|X|))^{s/2}]< \infty$ and $\E[X^4]\leq 5$ .
	\item[\rm (B2)] 
	There exist constants $\eta>0$ and $0<r\leq 2/3$ \st\ $\E[\exp(\eta\,|X|^{2r})]<\infty$ and $\E[X^4]< 5$.
	\item[\rm (B3)]
	There exist constants $\eta>0$ and $r\geq 1/2$ \st\ $\E[\exp(\eta\,|X|^{2r})]<\infty$ and \mbox{$\E[X^4]= 5$}.
	\item[\rm (B4)]
	There exists a constant $K$ with $\P(|X|\leq K)=1$ and $\E[X^4]= 5$. 
\end{enumerate}

The next theorem is our main result for interpoint distances. 

\begin{theorem}\label{thm:interpoint}
	Let $(\x_i)_{i\le p}$ be iid $\R^n$-valued random vectors, whose components fulfill the standard conditions. Assume one of the conditions {\rm (B1)} -- {\rm (B4)} on $X$ and that 
	$p=p_n\to\infty$ satisfies
	\begin{itemize}
		\item
		$p=O(n^{(s-2)/4})$, if {\rm (B1)} holds.
		\item
		$p=\exp(o(n^{r/(2-r)}))$, if {\rm (B2)} holds.
		\item
		$p=\exp(o(n^{1/(3+2/r)}))$, if {\rm (B3)} holds.
		\item
		$p=\exp(o(n^{1/3}))$, if {\rm (B4)} holds.
	\end{itemize} 
	Then we have 
	$$\max_{1\le i <j\le p} c_n^{(2)} (D_{ij}^{(2)}-b_n^{(2)}) \cid G\,,$$
	where $G$ is standard Gumbel distributed. The sequences $(b_n^{(2)})$ and $(c_n^{(2)})$ are given by
	\begin{align}\label{normconst}
	b_n^{(2)}:=2n+\sqrt{2n(\E[X^4]+1)}\,\tilde{d}_n \quad\text{and}\quad c_n^{(2)}:=\frac{\tilde{d}_n}{\sqrt{2n(\E[X^4]+1)}},
	\end{align}
	where 
	$$\tilde{d}_n:=\begin{cases}d_{n}^{(\tilde{\kappa})}, & \, \text{if (B2) holds with}\,\, r>1/2\,,  \\
		d_{n,1}, & \, \text{otherwise},
	\end{cases}$$ 
	with 
	\begin{align*}
		\tilde{\kappa}:=\frac{\E[X^6]+9\E[X^4]-10(\E[X^3])^2-10}{\sqrt{2}(\E[X^4]+1)^{3/2}}.
	\end{align*}
\end{theorem}
 
	\begin{remark}\em{
 (1) Very recently, \cite{tang2022asymptotic} studied the convergence in distribution of the maximum interpoint distance in the special case $p/n\to c\in(0,\infty)$ and assuming a finite moment generating function of $|X|$. This is a lot more restrictive than the assumptions of Theorem \ref{thm:interpoint}, where in the case $p/n\to c$ only $\E[|X|^{12}(\log|X|)^{3}]< \infty$ is required.\\
 (2) By taking the square root, we see that $\P\Big(\max_{1\le i <j\le p}\Vert \mathbf{x}_i-\mathbf{x}_j\Vert_2<\sqrt{x/c_n^{(2)}+b_n^{(2)}}\Big) \to \Lambda(x)$ for $x\in \R$, as $\nto$.\\
 (3) Instead of considering the largest interpoint distance between all possible combinations of points of one sample, we can study the largest distance between points of two different samples $(\x_i)_{i\le p}$ and $(\y_j)_{j\leq p}$ with the same mean. After similar normalization as in Theorem~\ref{thm:interpoint}, it is shown in Section~\ref{sec:twosample} that $\max_{1\le i,j\le p} \Vert\x_i-\y_j\Vert_2^2$ converges to a Gumbel distributed random variable.\\
 (4) Notice that the assumption $\E[X^4]\leq 5$ in (B1) is equivalent to the correlation in \eqref{corr} being at most $1/3$. The case of correlation larger than $1/3$ will be discussed in Section~\ref{sec:strongcor}. In (B2) we consider exponential moments and require $\E[X^4]<5$. In the special case $\E[X^4]=5$ we need to make stronger assumptions. One possibility is to require a slower rate for $p$ depending on $n$, which we consider in assumption (B3). Alternatively, we can demand stronger assumptions on $X$ such as (B4).}
	\end{remark}

\subsection{Maximum of dependent random walks}\label{sec:2.2}

Theorem \ref{thm:interpoint}  is a direct consequence of Theorem~\ref{thm:summax} below, where more general random walks $\tilde{T}_{ij}$ with the following additive  structure are considered:
\begin{equation}\label{eq:additive}
	\tilde{T}_{ij}:=  \sum_{l=1}^n f(X_{il},X_{jl})\,, \qquad 1\le i<j\le p\,,
\end{equation}
for some measurable function $f:\R^2\to \R$ with $f(x,y)=f(y,x)$. If the random vectors $(\x_i)$ have iid components, then $\tilde{T}_{ij}$ is a sum of iid random variables. This suggests that after appropriate centering and scaling $\tilde{T}_{ij}$ will converge to a standard normal variable. More precisely, for $1\le i<j \le p$ we introduce the standardized sums
\begin{align}\label{eq:defTij}
	T_{ij}:= \frac{1}{\sqrt n} \sum_{l=1}^n Z_{i,j,l},\quad \text{where}\quad Z_{i,j,l}:=\frac{f(X_{il},X_{jl})-\E[f(X_{11},X_{21})]}{\sqrt{\Var(f(X_{11},X_{21}))}}  
\end{align}
are iid (with respect to $l\ge 1$) mean zero, unit variance random variables with generic element $Z$. Define the sequences $(b_n)$ and $(c_n)$ by 
\begin{align}
	b_n:=n\,\E[f(X_{11},X_{21})]+\sqrt{n\Var(f(X_{11},X_{21}))}\,d_{n,1},\quad 
	c_n:=\frac{d_{n,1}}{\sqrt{n\Var(f(X_{11},X_{21}))}},
\end{align}
for $n\geq 1$.
By construction, it holds for $x\in \R$
\begin{align*}
	\P(c_n(\tilde{T}_{ij}-b_n)>x)= \P\Big( T_{ij} > \frac{x}{d_{n,1}}+d_{n,1}  \Big)\,,
\end{align*}
and the central limit theorem yields
$\P\big( T_{ij} > x \big)\sim \ov \Phi(x)\,, \nto$. Note that for an iid sequence $(\xi_i)_{i\ge 1}$ the convergence
\begin{equation*}
	\lim_{\nto} \P\Big(d_{n,1} \Big(\max_{i=1,\ldots,\tilde{p}} \xi_i-d_{n,1}\Big)\le x\Big)=\Lambda(x)\,,
	\qquad x\in\R\,, 
\end{equation*}
is equivalent to (see \cite{resnick:1987})
$$\tilde{p} \, \P(\xi_1>x_n) \sim  \tilde{p} \, \ov \Phi(x_n)\to \e^{-x}\,, \qquad \nto, x\in \R\,,$$
where we used the shorthand notation $x_n=\frac{x}{d_{n,1}}+d_{n,1}$. Hence, it is natural to first establish the corresponding limit relation $\tilde{p}\, \P(T_{12}>x_n)\to \e^{-x}$.

Using Lemma \ref{lin} we are able to find moment conditions on $Z$ under which $\P(T_{12}>x_n) \sim \ov \Phi(x_n)$ as $\nto$. For instance, we get 
\begin{align}\label{eq:edstses}
	\tilde{p}\,\P\big( T_{12} > x_n \big)\sim  \tilde{p}\,\ov \Phi\big(x_n \big)\sim \e^{-x}, \qquad \nto\,
\end{align}
if $p=\exp(o(n^{1/3}))$ and $\E[\exp(\eta\,|Z|^{1/2})]<\infty$ for some $\eta>0$. If we want to choose a larger $p$, we furthermore know by Lemma \ref{lin} that for $p=\exp(o(n^{1/2}))$ and $\E[\exp(\eta\,|Z|^{2/3})]<\infty$ for some $\eta>0$ it holds that
\begin{align}\label{cond2}
	&\tilde{p}\,\P\big( T_{12} > x_n \big) \sim  \tilde{p}\,\ov \Phi\big(x_n \big)\exp\Big(\frac{x_n^3 \, \E[Z^3]}{6n^{1/2}}\Big)\,, \qquad \nto\,.
\end{align}
Therefore, we have to replace the sequence $d_{n,1}$ by $d_{n}^{(\kappa)}$ with $\kappa:=\E[Z^3]$ to get the convergence of \eqref{cond2} to $\e^{-x}$ for $x\in\R$.
\smallskip

Interestingly, the influence of the dependence among the $(T_{ij})_{i<j}$ on the asymptotic distribution of their maximum can be captured in one single correlation parameter
\begin{equation}\label{eq:defrho}
	\rho:=\E[Z_{1,2,1}Z_{1,3,1}] =\Corr(f(X_{11},X_{21}),f(X_{11},X_{31}))\,.
\end{equation}
\begin{remark}\label{rem:rho}\em{
	The range of possible values for $\rho$ is given by $0\le \rho \le 1/2$. This can be shown by checking that the covariance matrix of the random variables $(T_{ij})_{1\le i <j}$ is positive semidefinite if and only if $0\le \rho \le 1/2$. 
	
	To this end, note that for $d\in \N$ the covariance matrix of $(T_{12},T_{13},\ldots, T_{1,d+1})$ is given by $\Sigma_d=\rho {\bf1}_d{\bf1}_d^{\top} +(1-\rho) \bfI_d$, where ${\bf1}_d$ is the $d$-dimensional vector of ones. Since $\rho$ is a correlation coefficient we must have $\rho\in [-1,1]$. It is well--known that $\Sigma_d$ is positive semidefinite if and only if $\rho \ge -1/(d-1)$. Since $d$ was arbitrary, we deduce that $\rho\ge 0$. 
	Next, one can check that the covariance matrix $M_3$ of $(T_{12},T_{13},T_{23},T_{14},T_{24},T_{34})$ has an eigenvalue $1-2\rho$ and therefore $\rho \le 1/2$ in order for $M_3$ to be positive semidefinite.
}\end{remark}

We will assume one of the following four moment conditions, where we recall that $Z\eid Z_{1,2,1}$ with $Z_{1,2,1}$ as in \eqref{eq:defTij}:
\begin{enumerate}
	\item[\rm (C1)]
	There exists $s>2$ \st\ $\E[|Z|^s(\log(|Z|))^{s/2}]< \infty$ and $\rho\leq 1/3$.
	\item[\rm (C2)] 
	There exist constants $\eta>0$ and $0<r\leq 2/3$ \st\ $\E[\exp(\eta\,|Z|^r)]<\infty$ and $\rho<1/3$.
	\item[\rm (C3)]
	There exist constants $\eta>0$ and $r\geq 1/2$ \st\ $\E[\exp(\eta\,|Z|^r)]<\infty$ and $\rho=1/3$.
	\item[\rm (C4)] 
	There exists a constant $K$ with $\P(|Z|\leq K)=1$ and $\rho=1/3$. 
\end{enumerate}
Our next result, Theorem \ref{thm:summax}, provides conditions for the convergence of the maximum
$$\max_{1\le i<j\le p} d_n(T_{ij}-d_n)=\max_{1\le i<j\le p} c_n(\tilde{T}_{ij}-b_n)\,,$$
where
$$
d_n:=\begin{cases}d_{n}^{(\kappa)}, & \, \text{if {\rm (C2)} holds with}\,\, r>1/2,  \\
	d_{n,1}, & \, \text{otherwise.}
\end{cases}
$$

\begin{theorem}\label{thm:summax}
	Let $(X_{it})_{i,t\ge 1}$ be iid random variables and let $Z\eid Z_{1,2,1}$ with $Z_{1,2,1}$ as in \eqref{eq:defTij}. Furthermore, assume one of the conditions {\rm (C1)} -- {\rm (C4)} on $Z$ and that 
	$p=p_n\to\infty$ satisfies
	\begin{itemize}
		\item
		$p=O(n^{(s-2)/4})$, if {\rm (C1)} holds.
		\item
		$p=\exp(o(n^{r/(2-r)}))$, if {\rm (C2)} holds.
		\item
		$p=\exp(o(n^{1/(3+2/r)}))$, if {\rm (C3)} holds.
		\item
		$p=\exp(o(n^{1/3}))$, if {\rm (C4)} holds.
	\end{itemize} 
	Then
	\begin{equation}\label{eq:maxf}
		\max_{1\le i<j\le p} d_n(T_{ij}-d_n)\cid G, \qquad \nto\,,
	\end{equation}
	where $G$ is standard Gumbel distributed.
\end{theorem}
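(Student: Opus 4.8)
Fix $x\in\R$ and put $x_n:=x/d_n+d_n$; since $d_n>0$ we have $\{\max_{1\le i<j\le p}d_n(T_{ij}-d_n)\le x\}=\{\max_{1\le i<j\le p}T_{ij}\le x_n\}$, so, writing $I_{ij}:=\1\{T_{ij}>x_n\}$ and $W:=\sum_{1\le i<j\le p}I_{ij}$, the claim \eqref{eq:maxf} is equivalent to $\lim_{\nto}\P(W=0)=\exp(-\e^{-x})$. I would prove this via the Chen--Stein Poisson approximation in its dependency-graph form, applied to the index set of pairs $\{(i,j):1\le i<j\le p\}$ with the adjacency rule $(i,j)\sim(k,l)$ iff $\{i,j\}\cap\{k,l\}\neq\emptyset$, whose neighbourhoods have size $2(p-2)+1$. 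Because $I_{ij}$ depends on $(\x_i,\x_j)$ only and the $\x_m$ are independent, $I_{ij}$ is independent of $\{I_{kl}:\{k,l\}\cap\{i,j\}=\emptyset\}$, so the term usually denoted $b_3$ vanishes, and it suffices to show $\E W\to\e^{-x}$ together with
\[
b_1=\sum_{(i,j)}\sum_{(k,l)\sim(i,j)}\P(I_{ij}=1)\P(I_{kl}=1)\to0,\qquad
b_2=\sum_{(i,j)}\ \sum_{\substack{(k,l)\sim(i,j)\\(k,l)\neq(i,j)}}\P(I_{ij}=1,I_{kl}=1)\to0.
\]

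For the mean, $\E W=\tilde p\,\P(T_{12}>x_n)$ by exchangeability. The sequence $(d_{n,1})$ was chosen so that $\tilde p\,\ov\Phi(d_{n,1})\to1$, and $x_n=d_{n,1}+o(1)$, whence an expansion of $\ov\Phi$ gives $\tilde p\,\ov\Phi(x_n)\to\e^{-x}$ (when $d_n=d_n^{(\kappa)}$, the relevant statement is that $\tilde p\,\ov\Phi(x_n)$ times the cubic factor from the large deviation expansion tends to $\e^{-x}$, which is exactly what the passage from $d_{n,1}$ to $d_n^{(\kappa)}$ is designed to achieve). Under each of (C1)--(C4) together with the matching growth bound on $p$, Lemma~\ref{lin} supplies the moderate/large deviation approximation relating $\P(T_{12}>x_n)$ to $\ov\Phi(x_n)$ (up to the cubic correction in the case (C2) with $r>1/2$), and combining this with the previous sentence gives $\E W\to\e^{-x}$. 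The term $b_1$ is then immediate: $b_1\le\tilde p\cdot 2p\cdot\P(T_{12}>x_n)^2$, and since $\tilde p\,\P(T_{12}>x_n)\to\e^{-x}$ and $p=o(\tilde p)$, this tends to $0$.

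The main work is $b_2\to0$, i.e.\ $\tilde p\,p\,\P(T_{12}>x_n,T_{13}>x_n)\to0$; this is exactly where $\rho$ and the four regimes enter. The key is the bivariate large deviation estimate
\[
\P(T_{12}>x_n,\,T_{13}>x_n)\ \lesssim\ \frac{1}{x_n^{2}}\exp\!\Big(-\frac{x_n^{2}}{1+\rho}\Big),
\]
with the usual cubic factor appended when (C2) holds with $r>1/2$. To establish it I would condition on $\mathcal F_1:=\sigma(X_{1l}:1\le l\le n)$, the coordinates of $\x_1$ that $T_{12}$ and $T_{13}$ share, which renders $T_{12}$ and $T_{13}$ conditionally independent; writing $T_{1j}=H+U_j$ with $H:=n^{-1/2}\sum_{l=1}^n\E[Z_{1,2,l}\mid X_{1l}]$ and $U_j$ the conditionally centred remainder (so $\Var H=\rho$, $\Var U_j=1-\rho$, $U_2\perp U_3\mid\mathcal F_1$), one applies the one-dimensional tail bound of Lemma~\ref{lin} conditionally to each $U_j$ and integrates over $H$ by a Laplace-type argument; the extremal split $H\approx\tfrac{2\rho}{1+\rho}x_n$, $U_2,U_3\approx\tfrac{1-\rho}{1+\rho}x_n$ produces the exponent $x_n^2/(1+\rho)$, while each of the two conditional tails contributes a factor $x_n^{-1}$. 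This second factor $x_n^{-1}$ is precisely what the crude bound $\P(T_{12}>x_n,T_{13}>x_n)\le\P(T_{12}+T_{13}>2x_n)$ misses, and it is decisive at the boundary $\rho=1/3$. Inserting $x_n^2=2\log\tilde p-\log\log\tilde p-\log(4\pi)+2x+o(1)$ shows that $\exp(-x_n^2/(1+\rho))$ equals $\tilde p^{-2/(1+\rho)}(\log\tilde p)^{1/(1+\rho)}$ up to a constant and a factor $1+o(1)$, and, using $\tilde p\sim p^2/2$ and $x_n^2\sim2\log\tilde p$,
\[
b_2\ \lesssim\ \tilde p\,p\,\frac{1}{x_n^{2}}\,\tilde p^{-2/(1+\rho)}(\log\tilde p)^{1/(1+\rho)}\ \lesssim\ p^{\,(3\rho-1)/(1+\rho)}\,(\log\tilde p)^{\,1/(1+\rho)-1}.
\]
For $\rho<1/3$ (case (C1) with $\rho<1/3$, and (C2)) the exponent of $p$ is strictly negative and dominates the logarithmic factor, as well as the extra $\tilde p^{o(1)}$ from the cubic correction in the (C2), $r>1/2$ case; for $\rho=1/3$ (the remainder of (C1), and (C3), (C4)) the power of $p$ vanishes but $1/(1+\rho)-1=-1/4<0$, so $b_2\to0$ still. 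Combining, $d_{\mathrm{TV}}(W,\mathrm{Poisson}(\E W))\le b_1+b_2\to0$ while $\E W\to\e^{-x}$, hence $\P(W=0)\to\exp(-\e^{-x})$, which is \eqref{eq:maxf}. The genuinely delicate point is the bivariate large deviation estimate with the sharp $x_n^{-2}$ prefactor, holding uniformly in the conditioning on $\mathcal F_1$ and across all four moment regimes; the rest is bookkeeping on top of Lemma~\ref{lin}.
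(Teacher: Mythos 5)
Your Chen--Stein skeleton is exactly the paper's: same index set of pairs, same neighbourhoods $B_\alpha$, $b_3=0$ by independence, $b_1=O(1/p)$, the mean $\E W=\tilde p\,\P(T_{12}>x_n)\to\e^{-x}$ via Lemma~\ref{lin} (with the cubic correction absorbed into $d_n^{(\kappa)}$ under (C2), $r>1/2$), and the whole theorem reduced to the joint tail $\P(T_{12}>x_n,T_{13}>x_n)=o(p^{-3})$. You have also correctly identified why the sharp prefactor $x_n^{-2}$ in the bivariate estimate is indispensable at the boundary $\rho=1/3$ (without it the bound only gives $p^3\exp(-\tfrac34 x_n^2)\asymp(\log p)^{3/4}$), and your exponent bookkeeping $b_2\lesssim p^{(3\rho-1)/(1+\rho)}(\log\tilde p)^{1/(1+\rho)-1}$ matches what the paper obtains in Lemma~\ref{lem:minN} and Proposition~\ref{thm:rate}.

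The gap is in how you propose to prove that bivariate estimate. Writing $T_{1j}=H+U_j$ with $H=n^{-1/2}\sum_l\E[Z_{1,2,l}\mid X_{1l}]$ does give conditional independence of $U_2,U_3$ and $\Var H=\rho$, but conditionally on $\mathcal F_1$ the summands of $U_j$ are independent \emph{not identically distributed} variables whose conditional variances, third moments and exponential moments depend on the realized $X_{1l}$; Lemma~\ref{lin} is an iid statement and cannot be applied "conditionally" as written. To make your Laplace argument work you would need moderate-deviation expansions for non-identically distributed sums, uniform over the range $h\in[0,\tfrac{2\rho}{1+\rho}x_n]$ and over the random conditioning (including events where $n^{-1}\sum_l\Var(Z_{1,2,l}\mid X_{1l})$ deviates from $1-\rho$, which must be shown negligible at the $p^{-3}$ scale), and under (C1)/(C3) the conditional laws of $Z$ given $X_{1l}$ need not satisfy any uniform exponential-moment bound at all, so a truncation step is unavoidable. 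Truncation, however, perturbs the correlation to some $\rho_n$ which at the boundary $\rho=1/3$ can exceed $1/3$; the paper spends the $\beta_n$-argument in Lemma~\ref{lem:minN} precisely on showing this shift is harmless, and your sketch does not address it. The paper sidesteps all of this differently: after truncating (Fuk--Nagaev/union bounds), it applies Zaitsev's strong Gaussian approximation (Theorem~1.1 of \cite{Zaitsev:1987}) to the bivariate vector $(T_{12},T_{13})$ and then uses the exact bivariate Gaussian tail asymptotics of Lemma~\ref{lem:1}, which is where the $x_n^{-2}$ prefactor comes from; under (C2), where $\rho<1/3$ strictly, it avoids the sharp estimate altogether via the crude bound $\P(T_{12}>x_n,T_{13}>x_n)\le\P(T_{12}+T_{13}>2x_n)$ and Lemma~\ref{lin}(i). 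So your plan is plausible in outline, but the step you yourself flag as "genuinely delicate" is exactly the content of the theorem's hardest part, and as proposed it is not yet a proof.
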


\begin{proof}[Sketch of the proof]
	We restrict ourselves to the proof under condition {\rm (C4)}. While this case might appear as the easiest of the four, it deals with the largest correlation $\rho=1/3$ and explains why this particular value plays a special role. 
	
	Firstly, \eqref{eq:edstses} already establishes the necessary and sufficient condition for the convergence of the maximum of $\tilde{p}$ iid copies of $T_{12}$. 
	More precisely, letting $x_n:=x/d_n+d_n$ for $x\in \R$, an application of Lemma \ref{lin}(iii) yields for $p=\exp(o(n^{1/3}))$ that
	\begin{equation}\label{eq:drhdfdf}
		\begin{split}
			\tilde{p}\, \P\Big( \frac{1}{\sqrt n} \sum_{l=1}^n Z_{1,2,l} > x_n \Big)&=\tilde{p}\, \ov \Phi(x_n)\exp\left(\frac{x_n^3\E[Z^3]}{6n^{1/2}}\right)\left(1+O\left(\frac{1+x_n}{n^{1/2}}\right)\right)\\
			&\sim \tilde{p}\, \ov \Phi(x_n)\to \e^{-x} \,, \qquad \nto\,.
		\end{split}
	\end{equation}
	Notice that this convergence does not hold if $p=\exp(n^{1/3})$ and $\E[Z^3]\neq 0$. This means that the growth rate $p=\exp(o(n^{1/3}))$ cannot be increased without adjusting the normalization; c.f.\ Remark~\ref{rem:2.6}. We would like to point out that similar optimality properties hold in all cases in Theorem~\ref{thm:summax}.
	
	Secondly, combining \eqref{eq:drhdfdf} and Lemma \ref{lem:poissonapprox2} we may conclude the desired result \eqref{eq:maxf} if we can show that
	\begin{equation}\label{eq:A20102}
		\lim_{\nto} p^3\,\P\big( d_n(T_{12}-d_n)>x, d_n(T_{13}-d_n)>x\big)=0\,.
	\end{equation}
	As the $|Z_{i,j,l}|$ are bounded by $K$, we can apply Theorem 1.1 of \cite{Zaitsev:1987} to obtain an approximation of the distribution of the vector $(T_{12}, T_{13})$ by the distribution of a vector $(N_1,N_2)$ of standard normal variables with $\Cov(N_1,N_2)=\rho$. From this approximation we deduce that for $\lambda_n=(\log p)^{-1/2}$ 
	\begin{align*}
		&p^3\,\P \big(\min(T_{12}, T_{13})>x_n\big)\leq p^3\,\P(\min(N_1,N_2)>x_n-\lambda_n)+ p^3\,c_1\exp\Big(-c_2\frac{\sqrt{n}\lambda_n}{K}\Big),
	\end{align*}
	where $c_1,c_2$ are absolute constants. 
 Using properties of the tails of equicorrelated Gaussian random variables in Lemma~\ref{lem:1}, the terms on the \rhs\ converge to zero provided that $\rho\le 1/3$ and $p= \exp(o( n^{1/3}))$. 
	
	It is worth mentioning that $p^3\,\P(\min(N_1,N_2)>x_n)\to 0$ if and only if $\rho\le 1/3$. The arguments under conditions {\rm (C1)}-{\rm (C3)} are similar but more involved. Unfortunately, since $Z$ is not bounded in these cases a series of truncation steps is required. The detailed proof of Theorem \ref{thm:summax} is given in Section \ref{sec:mainproof}.
\end{proof}
An important step in the sketch of the proof is the normal approximation of the sum of independent random variables. The independence requirement on the components of the random vectors $\x_i$ can be weakened
	if we accept stronger conditions on the moments of the components or the rate of $p$. For example, using the moderate deviation result for locally dependent random variables in \cite[Theorem~2.1]{liu2021cramer} and following the lines of the proof of Theorem \ref{thm:summax} under (C2) we can show \eqref{eq:maxf} for locally dependent components if we demand $p=\exp(o(n^{1/3}))$ and a moment condition, which is stronger than $\E[\exp(\eta\,|Z|)]<\infty$ and determined by the dependence of the components.
\begin{remark}\label{rem:29}\em{
	In the proof of Theorem \ref{thm:summax} we employ the Chen-Stein Poisson
	approximation method from \cite{arratia:goldstein:gordon:1989}. For $x\in \R$ consider the sums
	\begin{align*}
		W_n(x):=\sum_{1\le i<j\le p}  \1_{\{d_n(T_{ij}-d_n) >x\}}\,, \qquad n \ge 1.
	\end{align*}
	Along the lines of the proof Theorem \ref{thm:summax} it can be shown that $W_n(x)\cid W(x)$, where $W(x)$ is a Poisson distributed random variable with parameter $\e^{-x}$. It is easy to see 
	$$\{W_n(x)=0\} = \left\{\max_{1\le i<j\le p} d_n(T_{ij}-d_n) \le x \right\}\,, \qquad x\in \R\,,$$
	so that the convergence in distribution of $W_n(x)$ yields 
	$$\lim_{\nto} \P(W_n(x)=0) =\exp\big(-\e^{-x}\big)\,,\qquad x\in \R\,,$$
	which in turn implies \eqref{eq:maxf}. Finally, we mention that in Section~\ref{sec:pointproc} the convergence of the maximum will be extended to point process convergence (Theorem~\ref{thm:summaxpp}).
}\end{remark}

We proceed by discussing the assumptions of Theorem \ref{thm:summax}. One can see that the rate of $p\to\infty$ is connected to conditions on the moments of $|Z|$. The larger $p$ is relatively to $n$ the more moments have to exist to obtain \eqref{eq:maxf}. Intuitively this makes a lot of sense as a large $p$ increases the number of $T_{ij}$'s in the maximum, but does not improve the rate of convergence of the $T_{ij}$'s to the normal distribution. If $c_1 n\leq p\leq c_2 n$ holds for constants $c_1$ and $c_2$, then \cite{li:liu:rosalsky:2010} and \cite{li:qi:rosalsky:2012} proved that $\E[Z^{6-\delta}]<\infty$ for every $\delta>0$ is a necessary condition in the case $f(x,y)=xy$ (see \eqref{eq:additive}). According to our Theorem \ref{thm:summax} a sufficient condition in this case is $\E[Z^6\log(|Z|^3)]<\infty$ showing that the moment condition {\rm(C1)} cannot be weakened in general.

If $p$ grows exponentially in $n$, then we need finite exponential moments of certain powers of $|Z|$.
If, for instance, $\E[\exp(\eta\,|Z|^{1/2})]<\infty$, we get \eqref{eq:maxf} for $p=\exp(o(n^{1/3}))$ provided that $\rho<1/3$. If $\rho=1/3$, one has to either reduce the range to $p=\exp(o(n^{1/7}))$ or assume that $|Z|$ is bounded.

Noting that, for $0<s<t$, $\E[\exp(\eta\,|Z|^t)]<\infty$ implies $\E[\exp(\eta\,|Z|^s)]<\infty$, Theorem~\ref{thm:summax} generalizes several special cases known in the literature, where typically a finite moment generating function of $|Z|$ is assumed and the maximum is taken over uncorrelated terms, that is, $\rho=0$; see \cite{jiang:2004b, li:liu:rosalsky:2010, han:chen:liu:2017, heiny:mikosch:yslas:2021}.

\begin{remark}\label{rem:2.6}\em{
	Under {\rm (C2)} choosing $r>1/2$ equation \eqref{eq:maxf} does not hold for $d_n=d_{n,1}$ anymore. For example for $\log p=n^{1/3}$ we get
	\begin{align*}
		\lim\limits_{n\to\infty}\P\Big(\max_{1\leq i<j\leq p}d_{n,1}\big(T_{ij}-d_{n,1}\big)\leq x\Big)=\exp(-\e^{-x+\frac{4}{3}\E[Z^3]})\,, \qquad x\in \R.
	\end{align*}
	We have to recenter $\max_{1\leq i<j\leq p}T_{ij}$ by $d_{n}^{(\E[Z^3])}$ to obtain \eqref{eq:maxf}, compare also Shao and Zhou \cite[p.~625-626]{shao2014necessary}. 
}\end{remark}
Our next result concerns the order of the maximum.
\begin{proposition}\label{cor:opt}
	\begin{enumerate}
		\renewcommand{\labelenumi}{(\roman{enumi})}
		\item
		Under each of the conditions of Theorem \ref{thm:summax} we have
		\begin{align}\label{conv}
			\max_{1\leq i<j\leq p}\frac{1}{\sqrt{\log p}}\, T_{ij}\to 2
		\end{align}
		in probability as $n\to\infty$.
		\item
		Let $0<s\leq 1/2$. If \eqref{conv} holds for  any $p$ satisfying $\log p = o(n^s)$ as $n\to\infty$, then\linebreak $\E[\exp(\eta|Z|^\frac{2s}{1+s})]<\infty$ for some $\eta>0$.
	\end{enumerate}
\end{proposition}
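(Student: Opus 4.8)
The plan is to treat the two parts separately, with part (i) being a routine consequence of the machinery already in place and part (ii) being the genuinely new contribution.

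For part (i), the idea is that \eqref{conv} is a weak law, so it suffices to prove the one-sided bounds $\max_{i<j} T_{ij}/\sqrt{\log p}\ge 2-\ep$ and $\le 2+\ep$ with probability tending to one, for every $\ep>0$. The upper bound I would extract directly from the Poisson approximation argument underlying Theorem~\ref{thm:summax}: for a level $x_n=(2+\ep)\sqrt{\log p}$ one has $x_n/d_{n,1}\to\infty$, and by the large deviation estimate of Lemma~\ref{lin} together with $\tilde p\sim p^2/2$ one checks that $\tilde p\,\P(T_{12}>x_n)\to 0$; a union bound over the $\tilde p$ pairs then gives $\P(\max_{i<j}T_{ij}>x_n)\to 0$. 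Here one uses that the exponential-moment (resp.\ polynomial-moment) hypotheses of (C1)--(C4) and the corresponding growth rates for $p$ are exactly what make Lemma~\ref{lin} applicable at this slightly-above-critical level, since $(2+\ep)\sqrt{\log p}$ is only a constant factor above $d_{n,1}\sim\sqrt{2\log\tilde p}$. For the lower bound, one takes $x_n=(2-\ep)\sqrt{\log p}$; now $\tilde p\,\P(T_{12}>x_n)\to\infty$, and the same second-moment/Chen--Stein estimates that control the dependence in the proof of Theorem~\ref{thm:summax} (in particular the bound \eqref{eq:A20102}-type control on $p^3\P(T_{12}>x_n,T_{13}>x_n)$, which is even easier at this lower level) show $W_n$ concentrates around its mean, so $\P(\max_{i<j}T_{ij}\le x_n)\to 0$. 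Combining the two bounds yields \eqref{conv}.

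For part (ii) the strategy is a converse/necessity argument of the type in Shao and Zhou~\cite{shao2014necessary}. Suppose \eqref{conv} holds whenever $\log p=o(n^s)$. The point is to feed in a \emph{specific} slowly growing choice of $p$, say $\log p_n = L_n$ with $L_n\to\infty$, $L_n=o(n^s)$ to be optimized, and deduce a tail bound on $Z$. The mechanism: if $\E[\exp(\eta|Z|^{2s/(1+s)})]=\infty$ for all $\eta>0$, then the $Z_{i,j,l}$ have tails heavy enough that, at the relevant deviation level $x_n\sqrt n=(2-\ep)\sqrt{n\log p}$, a single summand $Z_{i,j,l}$ can by itself push the walk $\sum_l Z_{i,j,l}$ above the threshold with probability not much smaller than $n\,\P(|Z|> (2-\ep)\sqrt{n\log p})$. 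Quantitatively, $x_n\sqrt n \asymp \sqrt{nL_n}$, and the ``one big jump'' heuristic says $\P(T_{12}>x_n)\gtrsim n\,\P(Z>c\sqrt{nL_n})$; with $\log p=L_n$ one gets $\tilde p\,\P(T_{12}>x_n)\gtrsim p^2 n\,\P(Z>c\sqrt{nL_n})$, and if the tail of $Z$ is heavier than $\exp(-\const\cdot t^{2s/(1+s)})$ then one can choose $L_n\to\infty$ with $L_n=o(n^s)$ along which $p^2 n\,\P(Z> c\sqrt{nL_n})\not\to 0$; indeed $t=c\sqrt{nL_n}$ gives $t^{2s/(1+s)}=c'(nL_n)^{s/(1+s)}$, which is $o(L_n)=o(\log p^2)$ precisely when $L_n=o(n^s)$, so the tail contribution overwhelms $p^{-2}$. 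This forces $\max_{i<j}T_{ij}/\sqrt{\log p}$ to exceed $2$ with positive probability (in fact to blow up), contradicting \eqref{conv}. Hence $\E[\exp(\eta|Z|^{2s/(1+s)})]<\infty$ for some $\eta>0$.

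The main obstacle is the quantitative ``one big jump'' lower bound in part (ii): one must show that the heavy-tailed single-summand event is \emph{not} destroyed by the remaining $n-1$ summands (i.e.\ that $\sum_{l\neq l_0}Z_{i,j,l}=O(\sqrt n)$ on a set of probability bounded below, which follows from Chebyshev since these have bounded variance), and one must make the interplay between the exponent $2s/(1+s)$, the growth $\log p=o(n^s)$, and the deviation scale $\sqrt{n\log p}$ match up exactly — the equation $\frac{2s}{1+s}\cdot\frac12 = \frac{s}{1+s}$ and the requirement $(nL_n)^{s/(1+s)}=o(L_n)\iff L_n\gg n^{s}\cdot(\text{nothing})$, i.e.\ the borderline nature of $L_n=o(n^s)$, is what pins down the precise exponent and must be handled with care. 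A secondary technical point is that one should verify the argument is insensitive to whether $d_n=d_{n,1}$ or $d_n=d_n^{(\kappa)}$, but since part (ii) only concerns the first-order rate $\sqrt{\log p}$ and the two differ by a lower-order $O(\log p/\sqrt n)$ term, this causes no difficulty.
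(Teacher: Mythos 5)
For part (i) you are over-engineering: Theorem \ref{thm:summax} is assumed to be in force, so $d_n\bigl(\max_{1\le i<j\le p}T_{ij}-d_n\bigr)$ has a distributional limit, and since $d_n\sim 2\sqrt{\log p}$ the relation \eqref{conv} follows immediately -- this one-line deduction is exactly the paper's proof. Your alternative union-bound/second-moment route can be made to work, but your remark that the joint-tail control is ``even easier'' at the level $(2-\ep)\sqrt{\log p}$ is wrong when $\rho=1/3$: there $p^3\,\P(T_{12}>x_n,T_{13}>x_n)\approx p^{3\ep-3\ep^2/4}$ diverges; the second-moment method survives only because this cross term is still of smaller order than the square of the expected number of exceedances, not because it vanishes.

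The genuine gap is in the key quantitative step of part (ii). Your mechanism (one big jump: on the event $\{|\sum_{l\ge 2}Z_{1,2,l}|<\sqrt n\}$, of probability asymptotically at least $1/2$, a single large summand pushes $T_{12}$ over the threshold, and an iid subfamily of pairs converts this into a statement about the maximum) is essentially the paper's, which derives $p\,\P\bigl(|Z|\ge (C_0+1)\sqrt{n\log p}\bigr)\to 0$ using the pairs $(i,[p/2]+i)$ and then cites Shao and Zhou for the conversion to the exponential moment. But your exponent bookkeeping is backwards: $(nL_n)^{s/(1+s)}=o(L_n)$ holds if and only if $n^{s}=o(L_n)$, i.e.\ exactly in the regime excluded by $L_n=\log p=o(n^{s})$; in the admissible regime one has $(nL_n)^{s/(1+s)}/L_n=(n^{s}/L_n)^{1/(1+s)}\to\infty$, so for a tail of the fixed form $\exp(-\eta t^{2s/(1+s)})$ your quantity $p^2 n\,\P\bigl(|Z|>c\sqrt{nL_n}\bigr)\lesssim \exp\bigl(2L_n+\log n-\eta c'(nL_n)^{s/(1+s)}\bigr)\to 0$, and no contradiction with \eqref{conv} arises -- the argument as written proves nothing. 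The necessity cannot be obtained from ``tail heavier than $\exp(-\const\cdot t^{2s/(1+s)})$'' with a single constant; one must exploit that $\E[\exp(\eta|Z|^{2s/(1+s)})]=\infty$ for \emph{every} $\eta>0$: choose $t_k\to\infty$ with $\P(|Z|>t_k)\ge\exp(-t_k^{2s/(1+s)}/k)$, then set $L_k\asymp t_k^{2s/(1+s)}/k$ and $n_k\asymp t_k^{2}/L_k$, so that $L_k\asymp n_k^{s}k^{-(1+s)}=o(n_k^{s})$ while $p_{n_k}\,\P(|Z|>t_k)$ stays bounded away from $0$. At the admissible growth rate the deviation exponent is \emph{comparable} to $\log p$, not negligible, and the contradiction comes from the constants $1/k\to 0$; this is precisely the delicate step of the Shao--Zhou argument that the paper invokes and that your sketch, as stated, gets the wrong way around.
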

From Proposition \ref{cor:opt}(ii) we conclude that an exponential growth of $p$ requires a finite moment generating function of some power of $|Z|$ which is in line with conditions {\rm (C2)-(C4)} of Theorem~\ref{thm:summax}. In particular, it is impossible to replace {\rm (C2)-(C4)} by the weaker condition {\rm (C1)}.

By Theorem \ref{thm:summax} we are able to draw conclusions for the minimum of the random walks $T_{ij}$, for example the minimum interpoint distance.
\begin{corollary}
	Under the conditions of Theorem \ref{thm:summax} we get
	\begin{equation}
		\min_{1\le i<j\le p} d_n(T_{ij}+d_n) \cid -G,\qquad \nto\,,
	\end{equation}
	where $G$ is standard Gumbel distributed.
\end{corollary}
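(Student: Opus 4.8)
The plan is to obtain the corollary from Theorem~\ref{thm:summax} by a reflection argument. Replace the kernel $f$ in \eqref{eq:additive} by $-f$ (equivalently, set $Z':=-Z$) and consider the standardized walks
\begin{equation*}
T_{ij}':=-T_{ij}=\frac{1}{\sqrt n}\sum_{l=1}^n Z_{i,j,l}'\,,\qquad Z_{i,j,l}':=-Z_{i,j,l}\,.
\end{equation*}
These are again iid in $l$ with mean zero and unit variance, and the generic element $Z'\eid -Z$ satisfies $|Z'|=|Z|$. Hence $Z'$ fulfils exactly the same condition among {\rm (C1)}--{\rm (C4)} as $Z$, with the same constants $s$, resp.\ $\eta,r,K$. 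The key point is that the correlation parameter is reflection-invariant, since the two sign changes cancel:
\begin{equation*}
\rho'=\E[Z_{1,2,1}'Z_{1,3,1}']=\E[(-Z_{1,2,1})(-Z_{1,3,1})]=\E[Z_{1,2,1}Z_{1,3,1}]=\rho\,.
\end{equation*}
Therefore the constraint on $\rho$ in the chosen condition is inherited, the admissible growth rate of $p=p_n$ is unchanged, and Theorem~\ref{thm:summax} applies verbatim to $(T_{ij}')_{1\le i<j\le p}$.

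Next I would use the elementary identity, valid for any real array and any $d_n>0$,
\begin{equation*}
\min_{1\le i<j\le p} d_n(T_{ij}+d_n)=-\max_{1\le i<j\le p} d_n\big((-T_{ij})-d_n\big)=-\max_{1\le i<j\le p} d_n(T_{ij}'-d_n)\,.
\end{equation*}
Theorem~\ref{thm:summax} applied to $(T_{ij}')$ yields $\max_{1\le i<j\le p} d_n'(T_{ij}'-d_n')\cid G$, where $d_n'$ is the centering sequence of Theorem~\ref{thm:summax} attached to $Z'$: under {\rm (C1)}, {\rm (C3)}, {\rm (C4)}, and under {\rm (C2)} with $r\le 1/2$ one has $d_n'=d_{n,1}=d_n$; under {\rm (C2)} with $r>1/2$ one has $d_n'=d_n^{(\kappa')}$ with $\kappa'=\E[(Z')^3]=-\E[Z^3]$, that is, $d_n$ with the skewness $\E[Z^3]$ replaced by its negative. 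Combining the identity with this convergence and with the fact that $-G$ is the reflected Gumbel law gives
\begin{equation*}
\min_{1\le i<j\le p} d_n'(T_{ij}+d_n')\cid -G\,,
\end{equation*}
which is the assertion, $d_n$ being read as $d_n'$ (the two sequences coincide in every case except {\rm (C2)} with $r>1/2$). In the setting of Theorem~\ref{thm:interpoint}, in that one exceptional case this amounts to replacing $\tilde{\kappa}$ by $-\tilde{\kappa}$, which is exactly what passing from $f(x,y)=(x-y)^2$ to $-(x-y)^2$ does.

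I do not expect a real obstacle here: absolute moments and the single correlation parameter $\rho$ are reflection-invariant, so all hypotheses of Theorem~\ref{thm:summax} transfer to $-Z$, and the proof reduces to a one-line application of the theorem plus a trivial $\min/\max$ identity. The only point requiring some care is keeping track of the sign of the skewness correction in the centering sequence in the sub-case {\rm (C2)} with $r>1/2$. Alternatively one could avoid reflection and rerun the Chen-Stein Poisson-approximation argument of Section~\ref{sec:mainproof} directly for the lower tail, using $\tilde{p}\,\P(T_{12}<-x_n)=\tilde{p}\,\P(-T_{12}>x_n)$ together with the large-deviation expansion of Lemma~\ref{lin} for $-Z$ and Lemma~\ref{lem:poissonapprox}; the reflection route is shorter and uses Theorem~\ref{thm:summax} as a black box.
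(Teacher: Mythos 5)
Your reflection argument is precisely the paper's proof: the paper deduces the corollary from Theorem~\ref{thm:summax} via the identity $\min_{1\le i<j\le p}(T_{ij}+d_n)=-\max_{1\le i<j\le p}-(T_{ij}+d_n)$, i.e.\ by applying the theorem to the walks built from $-Z$. Your extra bookkeeping — that $|-Z|=|Z|$ and $\rho$ are reflection-invariant so the hypotheses transfer, and that under {\rm (C2)} with $r>1/2$ the correct centering is $d_n^{(-\E[Z^3])}$ rather than $d_n^{(\E[Z^3])}$ — is correct and in fact more careful than the paper's one-line proof, which silently glosses over the sign of the skewness correction.
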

\begin{proof}
	The result is an immediate consequence of Theorem \ref{thm:summax} as 
	\begin{align*}
		\min_{1\le i<j\le p}T_{ij}+d_n=-\max_{1\le i<j\le p}-(T_{ij}+d_n).
	\end{align*}
\end{proof}

Additionally, it turns out that the normalized maxima and minima are asymptotically independent.

\begin{theorem}\label{cor:minimaxi}
	Under the conditions of Theorem \ref{thm:summax} 
	we have 
	\begin{align*}
		&\Big(\max_{1\le i<j\le p} d_n(T_{ij}-d_n), \min_{1\le i<j\le p} d_n(T_{ij}+d_n)\Big)\cid (G,-G'),\qquad \nto\,,
	\end{align*}
	where $G$ and $G'$ are independent standard Gumbel distributed random variables.
\end{theorem}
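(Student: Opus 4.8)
The plan is to prove the joint convergence via the Chen--Stein Poisson approximation machinery already developed for Theorem~\ref{thm:summax}, applied now to a bivariate indicator count. For $x,y\in\R$ set $x_n:=x/d_n+d_n$ and $y_n:=-y/d_n-d_n$ (so that $\{d_n(T_{ij}+d_n)>y\}=\{T_{ij}<y_n\}$ after the obvious sign reflection), and introduce the count
\begin{align*}
V_n(x,y):=\sum_{1\le i<j\le p}\Big(\1_{\{T_{ij}>x_n\}}+\1_{\{-T_{ij}>-y_n\}}\Big).
\end{align*}
Because the event $\{\max_{i<j} d_n(T_{ij}-d_n)\le x,\ \min_{i<j} d_n(T_{ij}+d_n)\le -y'\}$ for a suitable reflected threshold can be read off from $\{V_n(x,y)=0\}$ together with the monotonicity of the thresholds, it suffices to show $V_n(x,y)\cid \mathrm{Poisson}(\e^{-x}+\e^{-y})$. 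Equivalently, by Lemma~\ref{lem:poissonapprox} (the Arratia--Goldstein--Gordon bound) one must control the three quantities $b_1,b_2,b_3$ for the index set consisting of ordered pairs tagged ``$+$'' or ``$-$''. The term $b_1$ involves $\lambda^2/\tilde p$-type sums and vanishes exactly as in the proof of Theorem~\ref{thm:summax}, using $\tilde p\,\P(T_{12}>x_n)\to\e^{-x}$ and the symmetric statement $\tilde p\,\P(T_{12}<y_n)\to\e^{-y}$ (which follows from the same large-deviation Lemma~\ref{lin} applied to $-Z$; note the moment conditions (C1)--(C4) are symmetric in sign, and the third-moment correction uses $\E[(-Z)^3]=-\E[Z^3]$, absorbed by the definition of $d_n$).

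The key step is the cross term $b_3$, which after grouping reduces to showing two things. First, the pure-max and pure-min interactions $p^3\,\P(T_{12}>x_n,\,T_{13}>x_n)\to 0$ and $p^3\,\P(T_{12}<y_n,\,T_{13}<y_n)\to 0$ — these are precisely \eqref{eq:A20102} and its sign-reflected twin, already handled in the proof of Theorem~\ref{thm:summax} (via Zaitsev's or the truncation-based Gaussian approximation and Lemma~\ref{lem:minN}, using $\rho\le 1/3$). Second, the genuinely mixed term
\begin{align*}
p^3\,\P\big(T_{12}>x_n,\ T_{13}<y_n\big)\longrightarrow 0,
\end{align*}
and also the ``two-edges-sharing-a-vertex with opposite signs'' plus ``disjoint-edge'' mixed contributions. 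For the disjoint-edge case $T_{12}$ and $T_{34}$ are independent, so $\tilde p^2\,\P(T_{12}>x_n)\P(T_{34}<y_n)=O(1)$ and the contribution to $b_2$ is $O(\tilde p^{-1})\to 0$ as in the one-sided proof. For the vertex-sharing opposite-sign case one applies the same Gaussian approximation to the bivariate vector $(T_{12},T_{13})$, whose limiting covariance is $\rho\ge 0$: the point is that $\P(N_1>x_n-\lambda_n,\ N_2<y_n+\lambda_n)$ with $\corr(N_1,N_2)=\rho\ge 0$ is \emph{bounded above} by $\P(N_1>x_n-\lambda_n)\,\P(N_2<y_n+\lambda_n)$ (nonnegatively correlated Gaussians are positively quadrant dependent, so the \emph{opposite}-tail joint probability is at most the product), hence this term is $O(\tilde p^{-1})$ and vanishes. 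The Gaussian-approximation error $p^3 c_1\exp(-c_2\sqrt n\lambda_n/K)$ is killed by $\lambda_n=(\log p)^{-1/2}$ exactly as before; under (C1)--(C3) the same truncation cascade used in Section~\ref{sec:mainproof} applies verbatim since it only touches the marginal/one-sided estimates.

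I would carry this out in the order: (1) reduce the joint law statement to $V_n(x,y)=0$ probabilities and hence to $V_n(x,y)\cid\mathrm{Poisson}(\e^{-x}+\e^{-y})$; (2) verify the marginal asymptotics $\tilde p\,\P(T_{12}>x_n)\to\e^{-x}$, $\tilde p\,\P(T_{12}<y_n)\to\e^{-y}$, invoking Lemma~\ref{lin} with $Z$ and $-Z$; (3) bound $b_1$ trivially; (4) bound $b_2$ and $b_3$ by splitting pairs of tagged edges into disjoint / vertex-sharing and same-sign / opposite-sign, reusing \eqref{eq:A20102} for the same-sign vertex-sharing parts and the positive-quadrant-dependence inequality for the opposite-sign parts; (5) conclude via Lemma~\ref{lem:poissonapprox} and extract the joint c.d.f. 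The main obstacle is bookkeeping in step (4): one must be careful that the opposite-sign vertex-sharing term does not blow up, and this is where the sign of $\rho$ matters — fortunately $\rho\ge 0$ (Remark~\ref{rem:rho}) makes the opposite-tail joint Gaussian probability \emph{sub}multiplicative, which is exactly what is needed, so no new moment or growth condition beyond those of Theorem~\ref{thm:summax} is required.
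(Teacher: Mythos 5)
Your proposal is correct and follows essentially the same route as the paper: reduce the joint law to the probability that no $T_{ij}$ falls in the two-sided rare region, apply the Chen--Stein bound of Lemma~\ref{lem:poissonapprox}, verify the marginal asymptotics via Lemma~\ref{lin} applied to $\pm Z$ (with the skewness correction absorbed by $d_n$), and split the vertex-sharing joint terms into same-sign parts (reusing (A2)) and mixed-sign parts handled by a Gaussian comparison exploiting $\rho\ge 0$, with the truncation steps carried over verbatim. The only blemishes are cosmetic: in the Arratia--Goldstein--Gordon bookkeeping the vertex-sharing joint probabilities belong to $b_2$ (with $b_3=0$ by independence of disjoint edges), and for the truncated cases the mixed term is cleaner via the comparison Lemmas~\ref{lem:2}/\ref{lem:1} (correlation $-\rho_n\le 1/3$) than via positive quadrant dependence, since the truncated correlation need not be exactly nonnegative.
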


\subsection{Maximum of dependent random walks in the case of strong correlation} \label{sec:strongcor}

In Remark~\ref{rem:rho}, we showed that $0\le \rho \le 1/2$ whereas the results so far were restricted to $\rho \in [0,1/3]$. The sketch of the proof of Theorem~\ref{thm:summax} gives a first explanation as to why the case $\rho >1/3$ is different. Next, we provide some error bounds for the convergence in \eqref{eq:maxf} under condition {\rm(C4)}.
\begin{proposition}\label{thm:rate}
	If $p=\exp(o(n^{1/3}))$ and there exists a constant $K$ with $|Z|<K$, then it holds, as $\nto$, 
	\begin{align*}
		&\big|\P\big(\max_{1\leq i<j\leq p}d_{n,1}(T_{ij}-d_{n,1})\leq x\big)-\exp(-\e^{-x})\big|\\
		&= O((\log p)^{-\rho/(1+\rho)}p^{-(1-3\rho)/(1+\rho)})+ O\Big(\sqrt{\frac{(\log p)^3}{n}}\Big)+O\Big(\frac{(\log(\log p))^2}{\log p}\Big)\,, \qquad x\in \R.
	\end{align*}
\end{proposition}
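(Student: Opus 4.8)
The plan is to re-run the Chen--Stein Poisson approximation behind the proof of Theorem~\ref{thm:summax}, but this time keeping every error term quantitative. Fix $x\in\R$, put $x_n:=x/d_{n,1}+d_{n,1}$, $A_{ij}:=\{d_{n,1}(T_{ij}-d_{n,1})>x\}=\{T_{ij}>x_n\}$ and $W_n(x):=\sum_{1\le i<j\le p}\1_{A_{ij}}$, so that $\{\max_{i<j}d_{n,1}(T_{ij}-d_{n,1})\le x\}=\{W_n(x)=0\}$. Using as dependency neighbourhood of the index $\{i,j\}$ the set $B_{ij}$ of all pairs sharing at least one index ($|B_{ij}|=2p-3$), the term $b_3$ in the bound of \cite{arratia:goldstein:gordon:1989} vanishes, since $\1_{A_{ij}}$ is independent of $\sigma(\1_{A_{kl}}\colon\{k,l\}\cap\{i,j\}=\emptyset)$; as $\tfrac{1-\e^{-\mu_n}}{\mu_n}\le1$ with $\mu_n:=\E[W_n(x)]=\tilde p\,\P(A_{12})$, this gives
\begin{equation*}
\big|\P(W_n(x)=0)-\e^{-\e^{-x}}\big|\ \le\ b_1+b_2+\big|\e^{-\mu_n}-\e^{-\e^{-x}}\big|\ \le\ b_1+b_2+|\mu_n-\e^{-x}|,
\end{equation*}
with $b_1=\sum_{\{i,j\}}\sum_{\{k,l\}\in B_{ij}}\P(A_{ij})\P(A_{kl})$ and $b_2=\sum_{\{i,j\}}\sum_{\{k,l\}\in B_{ij}\setminus\{\{i,j\}\}}\P(A_{ij}\cap A_{kl})$. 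It then remains to show that $|\mu_n-\e^{-x}|$ yields the second and third error terms, that $b_1=O(1/p)$, and that $b_2$ yields the first error term.

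For $|\mu_n-\e^{-x}|$ I would invoke Lemma~\ref{lin}(iii) exactly as in \eqref{eq:drhdfdf}, i.e.\ $\mu_n=\tilde p\,\ov\Phi(x_n)\exp\!\big(x_n^3\E[Z^3]/(6\sqrt n)\big)\big(1+O((1+x_n)/\sqrt n)\big)$. Since $x_n\asymp\sqrt{\log p}$ and $p=\exp(o(n^{1/3}))$, the argument $x_n^3\E[Z^3]/(6\sqrt n)$ is $O((\log p)^{3/2}/\sqrt n)=o(1)$, so the skewness factor equals $1+O(\sqrt{(\log p)^3/n})$, which also dominates the $(1+x_n)/\sqrt n$ correction. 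A Mills-ratio expansion of $\ov\Phi$ about $d_{n,1}$, using $d_{n,1}^2=2\log\tilde p-(\log\log\tilde p+\log4\pi)+O((\log\log\tilde p)^2/\log\tilde p)$ and $x_n^2=d_{n,1}^2+2x+O(1/\log\tilde p)$, then gives $\tilde p\,\ov\Phi(x_n)=\e^{-x}\big(1+O((\log\log\tilde p)^2/\log\tilde p)\big)$; as $\log\tilde p\sim2\log p$, this produces $|\mu_n-\e^{-x}|=O(\sqrt{(\log p)^3/n})+O((\log\log p)^2/\log p)$, hence the same bound for $|\e^{-\mu_n}-\e^{-\e^{-x}}|$. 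Since $\mu_n\to\e^{-x}$ is bounded, $\P(A_{12})=O(1/\tilde p)$, so $b_1=\binom p2(2p-3)\,\P(A_{12})^2=O(p/\tilde p)=O(1/p)$, which for $\rho\in[0,1/3]$ is absorbed into the first error term.

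The core of the argument is the bound on $b_2$, which has $\binom p2\cdot(2p-4)=p(p-1)(p-2)$ summands, each of them (after relabelling and using $f(x,y)=f(y,x)$) of the form $\P(\min(T_{12},T_{13})>x_n)$, where $(T_{12},T_{13})$ is $n^{-1/2}$ times a sum of $n$ iid bounded mean-zero $\R^2$-vectors with unit variances and $\Cov(Z_{1,2,l},Z_{1,3,l})=\rho$. I would follow the sketch of Theorem~\ref{thm:summax}: Theorem~1.1 of \cite{Zaitsev:1987} with smoothing level $\delta_n:=(\log p)^{-1/2}$ gives $\P(\min(T_{12},T_{13})>x_n)\le\P(\min(N_1,N_2)>x_n-\delta_n)+c_1\exp(-c_2\sqrt n\,\delta_n/K)$ for a bivariate standard normal $(N_1,N_2)$ with $\Cov(N_1,N_2)=\rho$ and absolute $c_1,c_2$. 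Multiplied by the $\lesssim p^3$ summands, the Zaitsev error contributes $p^3\exp(-c_2\sqrt n/(K\sqrt{\log p}))$, which is negligible relative to all three stated terms because $\sqrt n/\sqrt{\log p}\gg n^{1/3}\gg\log p$ under $p=\exp(o(n^{1/3}))$. For the Gaussian main term, Lemma~\ref{lem:minN} supplies a bound of the order $t^{-2}\exp(-t^2/(1+\rho))$ (up to a $\rho$-dependent constant) with $t=x_n-\delta_n$; since $t=x_n(1+O(1/\log p))$ one has $t^2=x_n^2-4+O(1/\log p)$ and $t^{-2}\asymp(\log p)^{-1}$, and inserting $x_n^2=2\log\tilde p-(\log\log\tilde p+\log4\pi)+2x+o(1)$ yields
\begin{equation*}
\P(\min(N_1,N_2)>x_n-\delta_n)\ \lesssim\ (\log p)^{-1}\exp\!\Big(-\tfrac{x_n^2}{1+\rho}\Big)\ \asymp\ (\log p)^{-1}\,\tilde p^{\,-2/(1+\rho)}(\log\tilde p)^{1/(1+\rho)}.
\end{equation*}
Multiplying by the $\asymp p^3$ summands and using $\tilde p\asymp p^2$, $\log\tilde p\asymp\log p$ gives $b_2\lesssim p^{\,3-4/(1+\rho)}(\log p)^{\,1/(1+\rho)-1}=(\log p)^{-\rho/(1+\rho)}\,p^{-(1-3\rho)/(1+\rho)}$, the first error term. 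Collecting the three bounds finishes the proof.

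The hard part will be the bookkeeping in this last step: one must carry the second-order term $-(\log\log\tilde p+\log4\pi)$ inside $d_{n,1}^2$ through the exponent $-1/(1+\rho)$ without losing the factor $(\log\tilde p)^{1/(1+\rho)}$, verify that passing from $x_n$ to $x_n-\delta_n$ only perturbs the exponent by a bounded amount, and check that the single choice $\delta_n=(\log p)^{-1/2}$ is at once small enough to leave the bivariate Gaussian tail unchanged up to constants and large enough to make $p^3\exp(-c_2\sqrt n\,\delta_n/K)$ decay --- which is exactly where the threshold $p=\exp(o(n^{1/3}))$ is consumed. By contrast, the second and third error terms are essentially the known $O((\log\log p)^2/\log p)$ Gumbel-approximation rate for the maximum of $\tilde p$ iid standard normals (with the normalisation $d_{n,1}$) together with the $O(\sqrt{(\log p)^3/n})$ large-deviation skewness correction, both obtained from the Mills expansion built into $d_{n,1}$ and from \eqref{eq:drhdfdf}. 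Note finally that for $\rho>1/3$ the first term exceeds $1$ for large $p$, so the bound is vacuous there, in agreement with the rest of this section.
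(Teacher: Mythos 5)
Your proposal is correct and follows essentially the same route as the paper's proof: Chen--Stein with neighbourhoods of index-sharing pairs ($b_3=0$, $b_1=O(1/p)$), the $b_2$ term handled via Zaitsev's theorem with smoothing level $(\log p)^{-1/2}$ plus the bivariate Gaussian tail bound of Lemma~\ref{lem:minN}, and $|\lambda-\e^{-x}|$ controlled through Lemma~\ref{lin}(iii) and a Mills-ratio expansion, yielding exactly the three stated error terms. The only differences are cosmetic (indicator form of the Poisson approximation, absorbing the $O(1/p)$ terms into the first error term, which in fact works for all $\rho\in[0,1/2]$, not just $\rho\le 1/3$).
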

We see that the first error term does not vanish asymptotically if $\rho>1/3$. Therefore we need an alternative approach.

To prove Theorem \ref{thm:summax} under optimal assumptions we applied Theorem 1.1 of \cite{Zaitsev:1987}, which provides a pointwise normal approximation of the random variables $T_{ij}$. Another possibility is to use the normal approximation for maxima of sums of high-dimensional random vectors in \cite{chernozhukov2013gaussian}, which yields a bound on the Kolmogorov distance between the maximum of the $T_{ij}$ and the maximum of normal distributed random variables. For this method stronger moment assumptions and more restrictions on the rate of $p$ are necessary but the advantage is that this bound does not limit the dependency structure. Therefore, we can make statements about the maximum of the $T_{ij}$ even if $\rho>1/3$. 

To this end, consider a random field $(Y_{ij})_{1\le i <j}, i,j\in \N$ of standard normal random variables $Y_{ij}$. For $i<j$ and $s<t$ assume that
\begin{align}\label{eq:Yijsdf}
	\Cov(Y_{ij},Y_{st})=\begin{cases} 0,\quad\quad\quad\quad\lbrace i,j\rbrace\cap \lbrace s,t\rbrace=\emptyset\\
		1,\quad\quad\quad\quad \lbrace i,j\rbrace=\lbrace s,t\rbrace\\
		\rho,\quad\quad\quad\quad \text{otherwise}.
	\end{cases}
\end{align}
The range of possible values for $\rho$ is given by $0\le \rho \le 1/2$. This can be shown by checking that the covariance matrix of the random variables $(Y_{ij})_{1\le i <j}$ is positive semidefinite if and only if $0\le \rho \le 1/2$; see Remark~\ref{rem:rho}. Indeed, the covariance matrices of $(Y_{ij})_{1\le i<j\le p}$ and $(T_{ij})_{1\le i<j\le p}$ are the same.
By Corollary 2.1 of \cite{chernozhukov2013gaussian} we get the following lemma.
\begin{lemma}\label{lem:norm}
	Assume there exist constants $\eta, c,C>0$ such that 
	\begin{align}\label{eq:lem2.11}
		\E[\exp(\eta |Z|)]<\infty \qquad \text{and}\qquad\frac{(\log(\tilde{p}n))^7}{n}\leq C{}n^{-c}\,,
	\end{align}
	then it holds that
	\begin{align*}
		\sup_{t\in\R}\Big|\P\Big(\max_{1\leq i<j\leq p}\frac{1}{\sqrt{n}}\sum_{l=1}^n Z_{i,j,l}\leq t\Big)-\P\Big(\max_{1\leq i<j\leq p} Y_{ij}\leq t\Big)\Big|\leq C{}n^{-c}.
	\end{align*}
\end{lemma}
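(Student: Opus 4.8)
The plan is to derive Lemma~\ref{lem:norm} as a direct specialization of Corollary~2.1 of \cite{chernozhukov2013gaussian}. First I would set up the bookkeeping: there are $\tilde p = p(p-1)/2$ index pairs $(i,j)$ with $i<j$, so we are looking at the maximum of a $\tilde p$-dimensional mean-zero random vector whose $(i,j)$-th coordinate is $n^{-1/2}\sum_{l=1}^n Z_{i,j,l}$. Fix the dimension label $d:=\tilde p$ and relabel the coordinates $1,\dots,\tilde p$. The summands are $n^{-1/2}(Z_{i,j,l})_{i<j}$, $l=1,\dots,n$; they are independent across $l$ (the $X_{\cdot l}$ are independent across $l$) but highly dependent within each $l$. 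Each coordinate has mean zero and unit variance, and the covariance structure of the sum equals the matrix in \eqref{eq:Yijsdf} exactly, since $\E[Z_{i,j,l}Z_{s,t,l}]$ is $1$, $\rho$, or $0$ according to whether $\{i,j\}$ equals, overlaps, or is disjoint from $\{s,t\}$ (this is exactly the computation behind Remark~\ref{rem:rho}). Hence the Gaussian comparison vector in Chernozhukov--Chetverikov--Kato's result is precisely $(Y_{ij})_{i<j}$ with the stated covariances.

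Next I would verify the moment/regularity hypotheses that Corollary~2.1 of \cite{chernozhukov2013gaussian} requires. Their result needs a lower bound on the variances (here uniformly $=1$, trivially satisfied), and a control on the tails of the summands: typically a sub-exponential condition of the form $\E[\exp(|{\rm coordinate}|/B_n)]\le 2$ for a suitable envelope $B_n$, together with moment bounds $\E[|{\rm coordinate}|^{2+k}]\le B_n^k$. Since each $Z_{i,j,l}$ is a copy of $Z$ and $\E[\exp(\eta|Z|)]<\infty$ by \eqref{eq:lem2.11}, $Z$ is sub-exponential with a fixed constant; dividing by $\sqrt n$ only improves matters, so one can take $B_n$ to be a constant (or a slowly growing power of $\log(\tilde p n)$, whichever their statement is phrased with). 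The remaining hypothesis is the rate condition relating $B_n$, $d=\tilde p$, $n$: in CCK it reads $B_n^2(\log(dn))^7/n \le C n^{-c}$, and with $B_n$ bounded this is exactly the second half of \eqref{eq:lem2.11}, namely $(\log(\tilde p n))^7/n\le C n^{-c}$. Feeding these into Corollary~2.1 yields
\begin{align*}
\sup_{t\in\R}\Big|\P\Big(\max_{1\le i<j\le p}\tfrac{1}{\sqrt n}\sum_{l=1}^n Z_{i,j,l}\le t\Big)-\P\Big(\max_{1\le i<j\le p} Y_{ij}\le t\Big)\Big|\le C n^{-c}\,,
\end{align*}
after absorbing the universal constants from their bound into $C$ and possibly shrinking $c$.

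The main obstacle I anticipate is purely one of translation rather than of mathematics: one must check that the precise form of the hypotheses in the version of \cite{chernozhukov2013gaussian} being cited matches what \eqref{eq:lem2.11} provides --- in particular whether their envelope constant $B_n$ is allowed to be a fixed constant, whether they require $\E[\exp(|Z|/B_n)]\le 2$ (which follows from $\E[\exp(\eta|Z|)]<\infty$ by scaling) or some one-sided variant, and whether the bound involves $\max(d,n)$ or $dn$ inside the logarithm. Because $d=\tilde p \asymp p^2$, one should note $\log(\tilde p) \le 2\log p \le 2\log(\tilde p n)$, so logarithmic factors in $p$ versus $\tilde p$ are interchangeable up to constants and the rate condition is unaffected. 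A secondary point worth a sentence is that the summands are only assumed independent (not identically distributed) across $l$ in CCK, which is satisfied here, and that no restriction on the dependence structure within a block is needed --- this is precisely the feature being exploited, since $\rho$ may now lie in $(1/3,1/2]$. Once these identifications are made, the proof is a one-line invocation of the cited corollary.
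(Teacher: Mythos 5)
Your proposal is correct and follows exactly the paper's route: the paper derives Lemma~\ref{lem:norm} as an immediate consequence of Corollary~2.1 of \cite{chernozhukov2013gaussian}, with the same identifications you make (coordinates indexed by the $\tilde p$ pairs, independence across $l$, unit variances, the covariance structure \eqref{eq:Yijsdf}, sub-exponential envelope $B_n$ constant thanks to $\E[\exp(\eta|Z|)]<\infty$, and the rate condition in \eqref{eq:lem2.11}). Your verification of the hypotheses is in fact more detailed than what the paper records.
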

If \eqref{eq:lem2.11} is fulfilled, we obtain that the limiting distributions of $\max T_{ij}$ and $\max Y_{ij}$ are the same. More precisely, it holds
\begin{align}\label{eq:sdsdsxd}
	\sup_{t\in\R}\Big|\P\Big(\max_{1\leq i<j\leq p} d_{n,1}(T_{ij}-d_{n,1})  \leq t\Big)-\P\Big(\max_{1\leq i<j\leq p} d_{n,1}(Y_{ij}-d_{n,1}) \leq t\Big)\Big|\to 0\,, \qquad \nto\,.
\end{align}

\begin{remark}\em{
	The second condition in \eqref{eq:lem2.11} holds if $p=\exp(o(n^{1/7-\vep}))$ for $\vep>0$. For comparison, in the case $\rho<1/3$  Theorem~\ref{thm:summax} provides a result for the larger rate $p=\exp{o(n^{1/3})}$ under the weaker moment assumption $\E[\exp(\eta |Z|^{1/2} )]<\infty$ for some $\eta>0$. 
}\end{remark}
In view of \eqref{eq:sdsdsxd}, it is natural to ask for which values $\rho$ a Gumbel limit can be achieved. 
By similar arguments as in the proof of Proposition \ref{thm:rate} we get for $x\in\R$ that
\begin{align*}
	&\Big|\P\Big(\max_{1\leq i<j\leq p} d_{n,1}(Y_{ij}-d_{n,1})\leq x\Big)-\exp(-\e^{-x})\Big|\\
	&= O((\log p)^{-\rho/(1+\rho)}p^{-(1-3\rho)/(1+\rho)})+O\Big(\frac{(\log \log p)^2}{\log p}\Big),
\end{align*}
as $\nto$. Unfortunately, this does not yield a positive result for $\rho>1/3$. To proceed, we define the random variables
$$V_{n,\rho}(x):=\sum_{1\le i<j\le p}  \1_{\{d_{n,1}(Y_{ij}-d_{n,1}) >x\}}\,, \qquad n \ge 1, x\in \R,$$
where the $(Y_{ij})$ are as in \eqref{eq:Yijsdf}.
For the same reasons as in Remark \ref{rem:29} it holds that 
$$V_{n,\rho}(x)\cid W(x)\,, \qquad \nto\,, \rho\in [0, 1/3]\,,$$ 
where $W(x)$ is a Poisson distributed random variable with parameter $\e^{-x}$. Recall that the Poisson distribution is uniquely characterized by its sequence of moments; see e.g.\ \cite{kuba:panholzer:2016}. Our next result reveals the asymptotic behaviors of the first two moments of $V_{n,\rho}(x)$.
\begin{proposition}\label{prop:momentsv}
	For $\rho\in [0,1/2]$ and $x\in \R$ we have, as $\nto$,
	\begin{align*}
		\E[V_{n,\rho}(x)]&\to \E[W(x)] =\e^{-x}\,,\\
		\E[V_{n,\rho}^2(x)]&\sim \begin{cases} 
			\E[W(x)^2]=\e^{-x}+\e^{-2x} ,\quad\quad\quad &\rho\le 1/3,\\
			\frac{(1+\rho)^{3/2}}{8\pi (1-\rho)^{1/2}}\, (\log p)^{-\rho/(1+\rho)}p^{(3\rho-1)/{(1+\rho)}},\quad\quad\quad &\rho>1/3.
		\end{cases}
	\end{align*}
\end{proposition}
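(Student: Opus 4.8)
The plan is to compute the first two moments of $V_{n,\rho}(x)$ by expanding the indicator sums and reducing everything to one- and two-dimensional Gaussian tail probabilities, then extracting precise asymptotics. For the first moment, linearity gives $\E[V_{n,\rho}(x)]=\tilde p\,\P(d_n(Y_{12}-d_n)>x)=\tilde p\,\ov\Phi(x_n)$ with $x_n=x/d_n+d_n$, and the choice of $d_{n,1}$ (together with the $d_n^{(\kappa)}$ correction when it applies) was made precisely so that $\tilde p\,\ov\Phi(x_n)\to\e^{-x}$; this is exactly the computation already recorded around \eqref{eq:edstses}, so the first claim is essentially immediate. Note this part does not see $\rho$ at all, consistent with the first-order behavior being insensitive to the correlation.

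For the second moment, I would write
\[
\E[V_{n,\rho}^2(x)]=\sum_{\{i,j\}}\sum_{\{s,t\}}\P\big(d_n(Y_{ij}-d_n)>x,\ d_n(Y_{st}-d_n)>x\big)
\]
and split the double sum according to the overlap pattern of the index pairs: (a) $\{i,j\}=\{s,t\}$, contributing $\tilde p\,\ov\Phi(x_n)\to\e^{-x}$; (b) $\{i,j\}\cap\{s,t\}=\emptyset$, where independence gives $\binom{\tilde p}{2}$-many terms each equal to $\ov\Phi(x_n)^2$, contributing $(\tilde p\,\ov\Phi(x_n))^2\to\e^{-2x}$ up to negligible error; and (c) exactly one shared index, of which there are of order $p^3$ terms, each equal to $\P(\min(Y_1,Y_2)>x_n)$ for a bivariate normal with correlation $\rho$. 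The whole dichotomy in $\rho$ then comes from case (c): by the large-deviation asymptotics for $\P(\min(N_1,N_2)>x_n)$ with $\Cov(N_1,N_2)=\rho$ — the same bivariate Gaussian tail estimate underlying Lemma~\ref{lem:minN} — this term is $\asymp p^{(3\rho-1)/(1+\rho)}(\log p)^{-\rho/(1+\rho)}$ up to the stated constant, which vanishes when $\rho<1/3$, stays bounded when $\rho=1/3$, and blows up when $\rho>1/3$. Combining (a)–(c) yields $\E[V_{n,\rho}^2(x)]\to\e^{-x}+\e^{-2x}=\E[W(x)^2]$ for $\rho\le 1/3$, and $\E[V_{n,\rho}^2(x)]\sim \frac{(1+\rho)^{3/2}}{8\pi(1-\rho)^{1/2}}(\log p)^{-\rho/(1+\rho)}p^{(3\rho-1)/(1+\rho)}$ for $\rho>1/3$, the $p^3$-term dominating the $O(1)$ contributions from (a) and (b).

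The technical heart is the sharp two-dimensional Gaussian computation in case (c): one needs not just the exponential rate but the exact polynomial prefactor and constant in $\P(\min(N_1,N_2)>x_n)$. I would obtain this by rotating coordinates so that $N_1+N_2$ and $N_1-N_2$ are independent, writing the event $\{N_1>x_n,N_2>x_n\}$ in the new variables, and applying the standard $\ov\Phi(u)\sim \phi(u)/u$ expansion together with the precise value $x_n^2/2 = \log\tilde p - \tfrac12\log\log\tilde p -\tfrac12\log 4\pi - x + o(1)$ coming from the definition \eqref{dn1} of $d_{n,1}$; substituting $\tilde p\sim p^2/2$ then converts powers of $\log\tilde p$ and $\tilde p$ into the stated powers of $\log p$ and $p$. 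The main obstacle I anticipate is bookkeeping: tracking the subdominant $\log\log$ and constant factors through the rotation and through the relation between $x_n$, $d_n$, $\tilde p$ and $p$ carefully enough to land on $\frac{(1+\rho)^{3/2}}{8\pi(1-\rho)^{1/2}}$ exactly, while simultaneously checking that the number of overlapping pairs is $p^3(1+o(1))$ times the appropriate combinatorial constant and that the $d_n^{(\kappa)}$ correction (active only under (C2) with $r>1/2$, which is irrelevant to the normal field $Y_{ij}$ anyway) does not interfere. Everything else — the overlap classification and the first-moment computation — is routine given the earlier lemmas.
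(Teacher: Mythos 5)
Your proposal is correct and follows essentially the same route as the paper: the first moment is the same one-line computation, and the second moment is handled by the identical overlap decomposition (equal pairs, disjoint pairs, one shared index), with the shared-index block of order $p^3$ terms controlled by the sharp bivariate Gaussian tail asymptotic that the paper takes from Lemma~\ref{lem:1} (cf.\ the proof of Lemma~\ref{lem:minN}). The only difference is cosmetic: you propose to re-derive that bivariate tail estimate by rotating to the independent coordinates $N_1\pm N_2$, whereas the paper simply invokes Lemma~\ref{lem:1}; both lead to the same phase transition at $\rho=1/3$.
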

In this sense we observe a phase transition at $\rho=1/3$.

Our final goal is to show that the first order behavior of $\max T_{ij}$ is the same for all $\rho\in[0,1/2]$. The following result generalizes Proposition~\ref{cor:opt} to all possible values of $\rho$.
\begin{theorem}\label{thm:firstord}
	Let $(X_{it})_{i,t\ge 1}$ be iid random variables and let $Z\eid Z_{1,2,1}$ with $Z_{1,2,1}$ as in \eqref{eq:defTij}. Assuming \eqref{eq:lem2.11} it holds for all $\rho\in[0,1/2]$ that 
	\begin{align}\label{conv13}
		\max_{1\leq i<j\leq p}\frac{1}{\sqrt{\log p}}\, T_{ij}\cip 2\,, \qquad \nto\,.
	\end{align}
\end{theorem}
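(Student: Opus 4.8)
The plan is to prove the two one-sided bounds $\max_{i<j} T_{ij}/\sqrt{\log p}\cip 2$ by sandwiching, and to do so by transferring the problem to the Gaussian field $(Y_{ij})$ via Lemma~\ref{lem:norm}. Under assumption \eqref{eq:lem2.11} we have $\E[\exp(\eta|Z|)]<\infty$ and $(\log(\tilde p n))^7/n\le Cn^{-c}$, so by \eqref{eq:sdsdsxd} (or rather the Kolmogorov bound of Lemma~\ref{lem:norm} applied at the levels $t=\pm\vep\sqrt{\log p}$, which are legitimate since the estimate there is uniform in $t$) it suffices to establish \eqref{conv13} with $T_{ij}$ replaced by the Gaussian field $Y_{ij}$ of \eqref{eq:Yijsdf}. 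So the core statement to prove is: for every $\vep>0$,
\begin{align*}
\P\Big(\max_{1\le i<j\le p} Y_{ij} > (2+\vep)\sqrt{\log p}\Big)\to 0\quad\text{and}\quad \P\Big(\max_{1\le i<j\le p} Y_{ij} < (2-\vep)\sqrt{\log p}\Big)\to 0\,.
\end{align*}

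For the upper bound I would use a crude union bound: $\P(\max_{i<j} Y_{ij}>(2+\vep)\sqrt{\log p})\le \tilde p\,\ov\Phi((2+\vep)\sqrt{\log p})\le p^2\exp(-(2+\vep)^2(\log p)/2)=p^{2-(2+\vep)^2/2}\to 0$, since $(2+\vep)^2/2>2$. This step is immediate and needs no information on the correlation structure, hence works for all $\rho\in[0,1/2]$. For the lower bound I would exploit that the field is \emph{exchangeable with nonnegative correlations}: by the Gaussian representation $Y_{ij}=\sqrt{\rho}\,\xi_0+\sqrt{1-\rho}\,\xi_{ij}$ is not available for this particular covariance (the off-diagonal pattern is more subtle than an equicorrelated one), so instead I would either (a) restrict to the sub-collection $\{Y_{1j}:2\le j\le p\}$ of size $p-1$, whose covariance matrix is exactly the equicorrelated $\rho{\bf1}{\bf1}^\top+(1-\rho)\bfI$ — giving the clean representation $Y_{1j}=\sqrt{\rho}\,\xi_0+\sqrt{1-\rho}\,\eta_j$ with $(\eta_j)$ iid standard normal — and then $\max_{2\le j\le p}Y_{1j}=\sqrt{\rho}\,\xi_0+\sqrt{1-\rho}\max_j\eta_j$, where $\max_j\eta_j/\sqrt{2\log p}\cip 1$ by classical extreme value theory for iid Gaussians, so this sub-maximum is already $\sqrt{1-\rho}\cdot\sqrt{2}\cdot\sqrt{\log p}(1+o_\P(1))$; or (b) observe directly that $\max_{i<j}Y_{ij}\ge\max_{2\le j\le p}Y_{1j}$ and combine with the upper bound. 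Route (a) gives $\max_{i<j}Y_{ij}\gtrsim \sqrt{2(1-\rho)\log p}$, which is strictly less than $2\sqrt{\log p}$ when $\rho>0$, so it is not by itself sufficient — one needs the full field.

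The honest route for the lower bound is a second-moment (Paley–Zygmund) argument on $V_{n,\rho}(x)$ at the level $x_n$ corresponding to $t=(2-\vep)\sqrt{\log p}$: Proposition~\ref{prop:momentsv} (or rather its proof, which computes $\E[V_{n,\rho}]$ and $\E[V_{n,\rho}^2]$ at any level of the form $x/d_n+d_n$) must be invoked at a level that grows like $(2-\vep)\sqrt{\log p}$ rather than staying at a fixed quantile; one checks that at such a level $\E[V_{n,\rho}(t)]\to\infty$ like a positive power of $p$, while the dominant contribution to $\E[V_{n,\rho}^2(t)]$ — the pair-correlation term coming from pairs $\{i,j\},\{s,t\}$ sharing one index, weighted by the bivariate Gaussian tail $\P(\min(N_1,N_2)>t)$ with $\Cov(N_1,N_2)=\rho$ — is of order $(\E[V_{n,\rho}(t)])^2$ times a factor that stays bounded (this is exactly where $\rho<1/2$, not $\rho<1/3$, is what matters, because at a level $t\asymp\sqrt{\log p}$ the relevant comparison is $2t^2/(1+\rho)$ versus $t^2$, i.e. $2/(1+\rho)>1\iff\rho<1$, which always holds). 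Hence $\P(V_{n,\rho}(t)>0)\ge (\E[V_{n,\rho}(t)])^2/\E[V_{n,\rho}^2(t)]$ is bounded below, and after a standard truncation/subsequence upgrade (or by monotonicity of $t\mapsto V_{n,\rho}(t)$ and a union over a grid of $\vep$'s) one gets $\P(\max_{i<j}Y_{ij}\ge(2-\vep)\sqrt{\log p})\to1$.

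The main obstacle I anticipate is the precise bivariate Gaussian tail asymptotics entering the second moment at a \emph{growing} level $t_n\asymp\sqrt{\log p}$: the estimate $\P(\min(N_1,N_2)>t_n)\sim c(\rho)\,t_n^{-\gamma(\rho)}\exp(-t_n^2/(1+\rho))$ (with explicit $\gamma(\rho),c(\rho)$, cf. the statement of Proposition~\ref{prop:momentsv} and the Savage-type asymptotics for the orthant probability) has to be controlled uniformly enough that the ratio $\E[V_{n,\rho}^2]/(\E[V_{n,\rho}])^2$ stays bounded as $n\to\infty$; this requires verifying that the exponent $t_n^2-t_n^2\cdot 2/(1+\rho)=t_n^2(\rho-1)/(1+\rho)$ is negative (true for all $\rho<1$) and that the polynomial corrections do not spoil the bound. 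Everything else — the union bound, the reduction via Lemma~\ref{lem:norm}, the Paley–Zygmund step, and the $\vep$-grid argument — is routine. I would present the upper bound first (two lines), then the lower bound via the second-moment method, citing the computations behind Proposition~\ref{prop:momentsv} adapted to the level $t_n=(2-\vep)\sqrt{\log p}$, and finally note that the transfer to $T_{ij}$ is immediate from \eqref{eq:sdsdsxd}.
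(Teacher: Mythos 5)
Your reduction to the Gaussian field via Lemma~\ref{lem:norm} (the bound there is uniform in $t$, so applying it at $t=(2\pm\vep)\sqrt{\log p}$ is legitimate) and your union-bound upper estimate are fine. The genuine gap is the lower bound for $\rho\in(1/3,1/2]$, which is precisely the regime where the theorem says something new (for $\rho\le 1/3$ the statement already follows from Theorem~\ref{thm:summax}/Proposition~\ref{cor:opt}). The Paley--Zygmund argument on $V_{n,\rho}(t_n)$ at $t_n=(2-\vep)\sqrt{\log p}$ does not go through there: the shared-index contribution to the second moment is of order $p^3\,\P(\min(N_1,N_2)>t_n)\asymp p^{3}\,t_n^{-2}\exp\big(-t_n^2/(1+\rho)\big)$, while $(\E[V_{n,\rho}(t_n)])^2\asymp p^{4}\,t_n^{-2}\exp(-t_n^2)$, so the ratio is of order $p^{-1}\exp\big(\rho t_n^2/(1+\rho)\big)=p^{\,(2-\vep)^2\rho/(1+\rho)-1}$, which diverges for every $\rho>1/3$ once $\vep$ is small. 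Hence $(\E V)^2/\E[V^2]\to 0$ and Paley--Zygmund yields nothing. Your heuristic ``the relevant comparison is $2t^2/(1+\rho)$ versus $t^2$, so only $\rho<1$ matters'' is incorrect because it ignores the mismatch between the $\asymp p^3$ index-sharing pairs and the $\asymp p^4$ pairs behind $(\E V)^2$; the correct requirement is $(2-\vep)^2\rho/(1+\rho)\le 1$, i.e.\ essentially $\rho\le 1/3$. This is exactly the phase transition recorded in Proposition~\ref{prop:momentsv}, which you cite but which in fact shows the second moment blowing up for $\rho>1/3$, not staying comparable to $(\E V)^2$.

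The paper circumvents this by a comparison argument rather than a counting argument: by Slepian's inequality, $\P(\max_{i<j}Y^{(0)}_{ij}\le t)\le\P(\max_{i<j}Y^{(\rho)}_{ij}\le t)\le\P(\max_{i<j}Y^{(1/2)}_{ij}\le t)$, so the upper bound reduces to the iid case $\rho=0$ (your union bound is an equally valid substitute) and the lower bound reduces to the extreme case $\rho=1/2$. There one uses the explicit construction $Y^{(\rho)}_{ij}=\sqrt{1-2\rho}\,N_{ij}+\sqrt{\rho}\,(\xi_i+\eta_j)$, so that $Y^{(1/2)}_{ij}=(\xi_i+\eta_j)/\sqrt{2}$ and $\max_{i<j}Y^{(1/2)}_{ij}\ge\tfrac{1}{\sqrt2}\big(\max_{i\le p/2}\xi_i+\max_{j>p/2}\eta_j\big)\approx 2\sqrt{\log p}$, because the maximizing pair aligns the largest $\xi_i$ with the largest $\eta_j$ --- a clustering feature that the plain count $V_{n,\rho}$ cannot see, which is exactly why your second-moment route stalls. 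To repair your proof you would need to replace the Paley--Zygmund step by such a comparison/representation argument (or a conditional, suitably truncated second moment), not the plain second moment of $V_{n,\rho}$.
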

In other words, there is no phase transition in the first order behavior of the maximum of the $T_{ij}$. The main step in the proof of Theorem~\ref{thm:firstord} consists in establishing \eqref{conv13} with $T_{ij}$ replaced by $Y_{ij}$.

\section{More applications}\label{sec:applications} \setcounter{equation}{0}

We recall that $\x_i=(X_{i1}, \ldots, X_{in})^\top, i=1,\ldots,p$, are $n$-dimensional random vectors, whose components $(X_{it})_{i,t\ge 1}$ satisfy the {\em standard conditions}. Throughout this section, we are going to work with the sequence $d_n:=d_{n,1}$ defined in \eqref{dn1}.
\subsection{ \texorpdfstring{$q$}{}-norms}
Instead of using the Euclidean norm (or $2$-norm) to investigate the maximum of the interpoint distances, one can investigate more general $q$-norms. For $q\geq 1$ the $q$-norm of a vector $\x=(X_1,\ldots,X_n)^{\top}\in\R^n$ is defined by
\begin{align*}
	\Vert\x\Vert_q:=\Big(\sum_{i=1}^n |X_i|^q\Big)^{1/q}.
\end{align*} 
For $0<\E[|X|^{3q}]<\infty$ and $p=o(n^{1/5}(\log n)^{-6/5})$, as $n\to\infty$, Biau and Mason \cite[Proposition~5]{biau2015high} obtained for appropriate sequences $g_n$ and $h_n$ that
\begin{align}\label{eq:biaumason}
	g_n\Big(\max_{1\leq i\leq p}\Vert \x_i\Vert_q-\min_{1\leq j\leq p}\Vert \x_j\Vert_q\Big)-h_n\cid G+G',
\end{align}
where $G$ and $G'$ are independent, standard Gumbel distributed random variables. Accordingly, they did not investigate the asymptotic behavior of the maximum interpoint distance but the asymptotic behavior of the difference between the distance of the origin to its farthest and nearest neighbors which is also known as {\it contrast} in the computational learning literature and is an important statistic for high-dimensional data processing.

In order to consider the interpoint distances regarding $q$-norms, we set for $q\geq 1$
\begin{equation}\label{eq:defQij}
	D_{ij}^{(q)}:= \| \x_i -\x_j \|_q^q\,, \qquad 1\le i <j\le p\,,
\end{equation}  
and let
\begin{align*}
	\hat\rho:=\Corr\big(|X_{11}-X_{21}|^q, |X_{11}-X_{31}|^q\big)=\Corr\big(D_{12}^{(q)},D_{13}^{(q)}\big)\,.
\end{align*}
Similarly to Theorem~\ref{thm:interpoint}, we will need the following four moment conditions:
\begin{enumerate}
	\item[\rm (D1)]
	There exists $s>2$ \st\ $\E[|X|^{qs}(\log(|X|))^{s/2}]< \infty$ and $\hat\rho\leq 1/3$.
	\item[\rm (D2)] 
	There exist constants $\eta>0$ and $0<r\leq 1/2$ \st\ $\E[\exp(\eta\,|X|^{qr})]<\infty$ and $\hat\rho<1/3$.
	\item[\rm (D3)]
	There exist constants $\eta>0$ and $r\geq 1/2$ \st\ $\E[\exp(\eta\,|X|^{qr})]<\infty$ and $\hat\rho=1/3$.
	\item[\rm (D4)]
	There exists a constant $K$ with $|X|<K$ and $\hat\rho=1/3$. 
\end{enumerate}
The next result is a generalization of Theorem~\ref{thm:interpoint} to $q$-norms.
\begin{theorem}\label{thm:qnorm}
	Let $q\ge 1$ and assume one of the conditions {\rm (D1)}\,--\,{\rm (D4)} on $X$ and that 
	$p=p_n\to\infty$ satisfies
	\begin{itemize}
		\item
		$p=O(n^{(s-2)/4})$ if {\rm (D1)} holds.
		\item
		$p=\exp(o(n^{r/(2-r)}))$ if {\rm (D2)} holds.
		\item
		$p=\exp(o(n^{1/(3+2/r)}))$ if {\rm (D3)} holds.
		\item
		$p=\exp(o(n^{1/3}))$ if {\rm (D4)} holds.
	\end{itemize} 
	Then it holds that 
	$$\max_{1\le i <j\le n} c_n^{(q)} \big(D_{ij}^{(q)}-b_n^{(q)}\big) \cid G\,,$$
	where $G$ is standard Gumbel distributed and the sequences $(b_n^{(q)})$ and $(c_n^{(q)})$ are given by
	\begin{align*}
		b_n^{(q)}&:=n\,\E[|X_{11}-X_{21}|^q]+\sqrt{n\Var(|X_{11}-X_{21}|^{q})}d_n\,,\\
		c_n^{(q)}&:=\frac{d_n}{\sqrt{n\Var(|X_{11}-X_{21}|^{q})}}\,.
	\end{align*}
\end{theorem}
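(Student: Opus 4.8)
The plan is to recognize that Theorem~\ref{thm:qnorm} is a direct application of Theorem~\ref{thm:summax} with the specific choice of the symmetric function $f(x,y):=|x-y|^q$, which manifestly satisfies $f(x,y)=f(y,x)$ and is measurable. With this $f$, the random walks $\tilde T_{ij}=\sum_{l=1}^n f(X_{il},X_{jl})=\|\x_i-\x_j\|_q^q=D_{ij}^{(q)}$ are exactly the $q$-norm interpoint distances raised to the $q$-th power, and the standardized sums $T_{ij}$ of \eqref{eq:defTij} are built from $Z_{i,j,l}=(|X_{il}-X_{jl}|^q-\E[|X_{11}-X_{21}|^q])/\sqrt{\Var(|X_{11}-X_{21}|^q)}$. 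The correlation parameter $\rho$ in \eqref{eq:defrho} then becomes precisely $\hat\rho=\Corr(|X_{11}-X_{21}|^q,|X_{11}-X_{31}|^q)$, and the centering/scaling sequences $(b_n),(c_n)$ from Section~\ref{sec:2.2} specialize to $(b_n^{(q)}),(c_n^{(q)})$ in the statement. So the identity $\max_{i<j}c_n^{(q)}(D_{ij}^{(q)}-b_n^{(q)})=\max_{i<j}d_n(T_{ij}-d_n)$ holds by construction, and the conclusion follows once the hypotheses of Theorem~\ref{thm:summax} are verified.

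The real work is therefore translating the moment conditions (D1)--(D4), which are phrased in terms of $|X|$, into the corresponding conditions (C1)--(C4) on $Z$. First I would note that $|X_{il}-X_{jl}|^q\le 2^q(|X_{il}|^q+|X_{jl}|^q)$, so $|Z|\lesssim |X_{il}|^q+|X_{jl}|^q+\const$; hence a moment bound on $|X|^{qa}$ transfers to one on $|Z|^a$, and an exponential moment $\E[\exp(\eta|X|^{qr})]<\infty$ yields $\E[\exp(\eta'|Z|^{r})]<\infty$ for a possibly smaller $\eta'>0$ (using the elementary inequality $\exp(\eta'(u+v)^r)\le \exp(\eta u^r)\exp(\eta v^r)$ type estimates for $r\le 1$, splitting the constant, and Cauchy--Schwarz to handle the product of the two independent factors). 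Concretely: (D1) with $\E[|X|^{qs}(\log|X|)^{s/2}]<\infty$ gives $\E[|Z|^{s}(\log|Z|)^{s/2}]<\infty$, matching (C1); (D2) with $r\le 1/2$ gives (C2) with the same $r$ (note Theorem~\ref{thm:qnorm} restricts to $r\le 1/2$ here, so the sub-case $r>1/2$ with the $\kappa$-shift does not arise and $d_n=d_{n,1}$); (D3) gives (C3); and (D4), bounded $|X|$, forces bounded $|Z|$, matching (C4). In each case the growth condition on $p$ is verbatim the one in Theorem~\ref{thm:summax}, so nothing further is needed there.

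The one genuinely delicate point is the non-degeneracy requirement implicit in \eqref{eq:defTij}: we need $\Var(|X_{11}-X_{21}|^q)>0$ for $T_{ij}$ and the sequences $(b_n^{(q)}),(c_n^{(q)})$ to be well-defined. This fails only if $|X_{11}-X_{21}|^q$ is a.s.\ constant, i.e.\ $|X_{11}-X_{21}|$ is a.s.\ constant; since $X_{11}-X_{21}$ is symmetric with mean zero and (by the standard conditions) $\E[(X_{11}-X_{21})^2]=2$, the only way this happens is $X_{11}-X_{21}\in\{-c,c\}$ a.s., which forces $X$ to be a shifted two-point distribution and, combined with $\E[X]=0,\E[X^2]=1$, the symmetric Bernoulli law. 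One can either exclude this trivial case outright or observe that under (D1)--(D4) with $\hat\rho$ well-defined it is implicitly assumed $\Var>0$. I would add a one-line remark handling this. I expect the moment-transfer step (second paragraph) to be the main obstacle in the sense of requiring the most care, though it is entirely routine; everything else is bookkeeping to match notation between the $q$-norm setup and the abstract random-walk framework of Theorem~\ref{thm:summax}.
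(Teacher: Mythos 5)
Your proposal is correct and follows essentially the same route as the paper: recognize $D_{ij}^{(q)}$ as the random walk $\tilde T_{ij}$ with $f(x,y)=|x-y|^q$, identify $\rho=\hat\rho$, check that {\rm (D1)}--{\rm (D4)} imply {\rm (C1)}--{\rm (C4)}, and invoke Theorem~\ref{thm:summax}. The paper leaves the moment-transfer and the non-degeneracy of $\Var(|X_{11}-X_{21}|^q)$ implicit, so your extra care there (and your observation that $r\le 1/2$ in {\rm (D2)} keeps $d_n=d_{n,1}$, consistent with the section's convention) is sound elaboration rather than a different argument.
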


\begin{proof}
	Theorem \ref{thm:qnorm} is a special case of Theorem \ref{thm:summax} as it holds that
	\beam
	\frac{D_{ij}^{(q)}-n\,\E[|X_{11}-X_{21}|^q]}{\sqrt{n\Var(|X_{11}-X_{21}|^{q})}} = \frac{1}{\sqrt{n}}\sum_{l=1}^n Z_{i,j,l},
	\eeam
	where
	\begin{align*}
		Z_{i,j,l}:=\frac{|X_{il}-X_{jl}|^q-\E[|X_{11}-X_{21}|^q]}{\sqrt{\Var(|X_{11}-X_{21}|^{q})}}\qquad 1\le i <j\le p\,;\,\, 1\leq l\leq n.
	\end{align*} 
	We have $\Cov(Z_{i,j,1},Z_{r,s,1})=\hat\rho\leq \frac{1}{3}$ for $|\{i,j,r,s\}|=3$.
	Additionally, the conditions {\rm (C1)}--{\rm (C4)} for Theorem \ref{thm:summax} follow from {\rm (D1)}--{\rm (D4)}.
\end{proof}

Using Theorem \ref{cor:minimaxi} one can deduce the following result for the maximum and minimum $q$-norm distances.

\begin{corollary}\label{cor:sdf}
	Under the assumptions of Theorem \ref{thm:qnorm} it holds, as $\nto$,
	\begin{align*}
		&\big(\max_{1\le i<j\le p} c_n^{(q)} \big(D_{ij}^{(q)}-b_n^{(q)}\big), \min_{1\le i<j\le p} c_n^{(q)} \big(D_{ij}^{(q)}-b_n^{(q)}\big)+2d_n^2\big)\cid (G,-G')\,,
	\end{align*}
	where $G$ and $G'$ are independent, standard Gumbel distributed random variables.
\end{corollary}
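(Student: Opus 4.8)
The plan is to observe that Corollary~\ref{cor:sdf} is nothing but the $q$-norm restatement of Theorem~\ref{cor:minimaxi}, obtained through the same reduction that was already used to deduce Theorem~\ref{thm:qnorm} from Theorem~\ref{thm:summax}. First I would recall that reduction: with $f(x,y)=|x-y|^q$ and
$$Z_{i,j,l}=\frac{|X_{il}-X_{jl}|^q-\E[|X_{11}-X_{21}|^q]}{\sqrt{\Var(|X_{11}-X_{21}|^q)}}\,,$$
one has $T_{ij}=\frac{1}{\sqrt n}\sum_{l=1}^n Z_{i,j,l}=(D_{ij}^{(q)}-n\,\E[|X_{11}-X_{21}|^q])/\sqrt{n\Var(|X_{11}-X_{21}|^q)}$, and the correlation parameter $\rho$ of \eqref{eq:defrho} equals $\hat\rho$. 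Reading off the definitions of $b_n^{(q)}$ and $c_n^{(q)}$ then gives the affine identities
$$c_n^{(q)}\big(D_{ij}^{(q)}-b_n^{(q)}\big)=d_n(T_{ij}-d_n)\,,\qquad c_n^{(q)}\big(D_{ij}^{(q)}-b_n^{(q)}\big)+2d_n^2=d_n(T_{ij}+d_n)\,,$$
with $d_n=d_{n,1}$; here I note that condition {\rm (D2)} only permits $r\le 1/2$, so the exceptional centering $d_n^{(\kappa)}$ of Theorem~\ref{thm:summax} never enters. Taking the maximum in the first identity and the minimum in the second, over $1\le i<j\le p$, turns the left-hand sides into exactly the two coordinates in the statement.

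Next I would verify that the hypotheses needed to invoke Theorem~\ref{cor:minimaxi} --- that is, the conditions of Theorem~\ref{thm:summax} for the family $(T_{ij})$ --- are in force, which is precisely what was checked in the proof of Theorem~\ref{thm:qnorm}. For $q\ge 1$, the convexity bound $|X_{il}-X_{jl}|^q\le 2^{q-1}(|X_{il}|^q+|X_{jl}|^q)$ together with independence of the coordinates shows that {\rm (D1)}--{\rm (D4)} on $X$ imply {\rm (C1)}--{\rm (C4)} on $Z$ with the same $s$, the same $r$, and the same constraints on $\hat\rho=\rho$, while the admissible growth rates of $p$ in the two lists coincide. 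Hence Theorem~\ref{cor:minimaxi} applies to $(T_{ij})_{1\le i<j\le p}$ and yields
$$\Big(\max_{1\le i<j\le p} d_n(T_{ij}-d_n),\ \min_{1\le i<j\le p} d_n(T_{ij}+d_n)\Big)\cid(G,-G')$$
with $G,G'$ independent standard Gumbel. Substituting the affine identities from the first paragraph gives the claim.

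There is no genuine obstacle here: the only points demanding care are the algebraic bookkeeping of the centering and scaling sequences --- in particular the additive shift $2d_n^2$, which is exactly what converts $T_{ij}-d_n$ into $T_{ij}+d_n$, and hence the maximum statement into the minimum statement --- and the check that the passage from moment conditions on $X$ to moment conditions on $Z$ preserves each of the four regimes; both are already contained in Section~\ref{sec:applications}. One could alternatively give a self-contained argument by re-running the Chen--Stein/Poisson approximation together with the asymptotic-independence computation behind Theorem~\ref{cor:minimaxi}, but this would merely duplicate work, since all the genuine content lives at the level of Theorem~\ref{thm:summax} and Theorem~\ref{cor:minimaxi}.
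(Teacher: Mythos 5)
Your proposal is correct and is exactly the argument the paper intends: the corollary is deduced from Theorem~\ref{cor:minimaxi} via the same reduction used for Theorem~\ref{thm:qnorm}, with the affine identities $c_n^{(q)}(D_{ij}^{(q)}-b_n^{(q)})=d_n(T_{ij}-d_n)$ and $c_n^{(q)}(D_{ij}^{(q)}-b_n^{(q)})+2d_n^2=d_n(T_{ij}+d_n)$, and the observation that (D1)--(D4) imply (C1)--(C4) for $Z$ with $\hat\rho=\rho$. Your remark that (D2) restricts to $r\le 1/2$, so $d_n=d_{n,1}$ and the skewness-corrected centering never enters, is a correct and worthwhile detail that the paper leaves implicit.
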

In combination with the continuous mapping theorem, we obtain from Corollary \ref{cor:sdf}
\begin{align*}
	c_n^{(q)} \Big(\max_{1\le i <j\le n} D_{ij}^{(q)}-\min_{1\le i <j\le n} D_{ij}^{(q)}\Big)-2d_n^2\cid G+G',\qquad \nto\,,
\end{align*}
which is of similar structure as \eqref{eq:biaumason}.

\subsection{Testing the equality of means for high-dimensional vectors}

We consider high-dimensional observations $\tilde{\x}_1, \ldots, \tilde{\x}_p$ of the form 
$$\tilde{\x}_i=\x_i+\mu_i\,,$$
where $(\x_i)_{i=1,\ldots, p}$ are iid random vectors whose components fulfill the standard conditions and $(\mu_i)_{i=1,\ldots, p}$ are some vectors in $\R^n$. We assume that $\E[X^4]$, the fourth moment of the components, is finite and known. Since $\x_1$ is centered, the mean vector of $\tilde \x_i$ is given by $\E[\tilde \x_i]=\mu_i$. 
We are interested in testing the equality of the mean vectors $\mu_i$.
The corresponding testing problem is formulated by the null and alternative hypotheses
\begin{equation}\label{meantest}
	\begin{split}
		&H_0:\, \mu_1=\mu_2=\ldots=\mu_p, \\
		&H_A:\,\mu_{i^*}\neq\mu_{j^*} \quad \text{for some } 1\le i^*<j^*\le p\,.
	\end{split}
\end{equation}
Our test statistic $\mathcal{T}_n$ is going to be the maximum interpoint distance of the $(\tilde{\x}_i)$, that is,
\begin{equation}\label{eq:defDijtil}
	\mathcal{T}_n:=\max_{1\le i <j\le p} \tilde{D}_{ij}^{(2)}:=\max_{1\le i <j\le p} \| \tilde{\x}_i -\tilde{\x}_j \|_2^2\,.
\end{equation}
Observing that, under $H_0$, we have $\tilde{D}_{ij}^{(2)}={D}_{ij}^{(2)}$, the asymptotic distribution of the test statistic $\mathcal{T}_n$ is stated in Theorem \ref{thm:interpoint}, namely 
$c_n^{(2)} (\mathcal{T}_n-b_n^{(2)})$ converges to a standard Gumbel distributed random variable, where 
\begin{equation}\label{eq:asse}
	b_n^{(2)}:=2n+\sqrt{2n(\E[X^4]+1)}\,\tilde{d}_n\,, \quad\quad c_n^{(2)}:=\frac{\tilde{d}_n}{\sqrt{2n(\E[X^4]+1)}},
\end{equation}
and $\tilde{d}_n$ as in Theorem \ref{thm:interpoint}.
We remark that $\E[X^4]$ in \eqref{eq:asse} can be replaced by an estimate for $\E[X^4]$ such as the empirical fourth moment. For brevity of presentation this will not be pursued further.

The null hypothesis
in \eqref{meantest} is rejected, whenever 
\begin{align}	 \label{hd5}	
	\mathcal{T}_{n}>\frac{q_{1-\alpha}}{c_n^{(2)}}+b_n^{(2)}\,,
\end{align} 
where $q_{1-\alpha} = -\log(\log(\frac{1}{1-\alpha})) $ is the 
$({1-\alpha})$-quantile of the standard Gumbel distribution with distribution function $\exp ( - \exp ( - x) ), x\in \R$. The next result shows that this test has asymptotic level $\alpha\in (0,1)$ and analyzes its behavior under the alternative.

\begin{theorem}\label{thm:3.3}
	Assume the conditions of Theorem \ref{thm:interpoint}. Under the null hypothesis $H_0$, it holds for any $\alpha\in (0,1)$
	\begin{align}\label{eq:underH0}
		\lim_{\nto} \P\Big(\mathcal{T}_{n}>\frac{q_{1-\alpha}}{c_n^{(2)}}+b_n^{(2)}\Big) = \alpha\,.
	\end{align}
	Under the alternative hypothesis $H_A$, assume that there exist integer sequences $(i^*_n)_{n\ge 1}$, $(j^*_n)_{n\ge 1}$ satisfying $1\le i^\ast_n<j^\ast_n\le p$ such that
	\begin{align}\label{con}
		\lim_{\nto} \frac{\Vert \mu_{i^*_n}-\mu_{j^*_n}\Vert^2_2}{\sqrt{n \, \max(\log p,n)}} = \infty\,.
	\end{align}
	Then it holds for any $\alpha\in (0,1)$
	\begin{align*}
		\lim_{\nto} \P\Big(\mathcal{T}_{n}>\frac{q_{1-\alpha}}{c_n^{(2)}}+b_n^{(2)}\Big) = 1\,.
	\end{align*}
\end{theorem}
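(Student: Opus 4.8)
\emph{Overall plan.} I would handle the two assertions separately. The statement under $H_0$ is an immediate consequence of Theorem~\ref{thm:interpoint}: since all mean vectors coincide under $H_0$, we have $\tilde{\x}_i-\tilde{\x}_j=\x_i-\x_j$ and hence $\tilde{D}_{ij}^{(2)}=D_{ij}^{(2)}$ for every $i<j$, so $\mathcal{T}_n=\max_{i<j}D_{ij}^{(2)}$ and Theorem~\ref{thm:interpoint} gives $c_n^{(2)}(\mathcal{T}_n-b_n^{(2)})\cid G$ with $G$ standard Gumbel. Because the Gumbel distribution function $\Lambda$ is continuous and $\Lambda(q_{1-\alpha})=1-\alpha$, passing to the tail yields $\P(\mathcal{T}_n>q_{1-\alpha}/c_n^{(2)}+b_n^{(2)})=\P(c_n^{(2)}(\mathcal{T}_n-b_n^{(2)})>q_{1-\alpha})\to 1-\Lambda(q_{1-\alpha})=\alpha$, i.e.\ \eqref{eq:underH0}.

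\emph{Under $H_A$.} Abbreviate $i=i_n^*$, $j=j_n^*$ and $\delta_n:=\mu_{i_n^*}-\mu_{j_n^*}$. Since the maximum dominates a single summand, $\mathcal{T}_n\ge\tilde{D}_{ij}^{(2)}$, so it is enough to prove $c_n^{(2)}(\tilde{D}_{ij}^{(2)}-b_n^{(2)})\cip+\infty$; this forces the rejection probability to converge to $1$ for every fixed $\alpha$. Expanding the square gives the exact identity $\tilde{D}_{ij}^{(2)}=\|\x_i-\x_j+\delta_n\|_2^2=D_{ij}^{(2)}+2\langle\x_i-\x_j,\delta_n\rangle+\|\delta_n\|_2^2$. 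I would then bound the first two summands by second-moment estimates that hold no matter which admissible pair is picked (the $\x_i$ are iid and $\E[X^4]<\infty$ under each of (B1)--(B4)): $D_{ij}^{(2)}$ has mean $2n$ and variance $2n(\E[X^4]+1)$, whence $D_{ij}^{(2)}=2n+O_P(\sqrt{n})$; and $\langle\x_i-\x_j,\delta_n\rangle$ has mean $0$ and variance $2\|\delta_n\|_2^2$, whence it is $O_P(\|\delta_n\|_2)$. Since \eqref{con} forces $\|\delta_n\|_2^2\to\infty$ (indeed $\|\delta_n\|_2^2\gg\sqrt{n\max(\log p,n)}\ge n$), the cross term has smaller order than $\|\delta_n\|_2^2$, and therefore $\tilde{D}_{ij}^{(2)}-b_n^{(2)}=\|\delta_n\|_2^2\,(1+o_P(1))-\sqrt{2n(\E[X^4]+1)}\,\tilde{d}_n+O_P(\sqrt{n})$.

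\emph{Conclusion and main difficulty.} To finish I would compare $\|\delta_n\|_2^2$ with the deterministic and stochastic corrections. In every regime $\tilde{d}_n=O(\sqrt{\log\tilde{p}})=O(\sqrt{\log p})$: for $\tilde{d}_n=d_{n,1}$ this is clear, and for $\tilde{d}_n=d_n^{(\tilde{\kappa})}$ (case (B2) with $r>1/2$) the extra term $\tilde{\kappa}\log\tilde{p}/(3\sqrt{n})$ is $o(1)$ because $\log p=o(n^{r/(2-r)})=o(n^{1/2})$ there. Hence $\sqrt{2n(\E[X^4]+1)}\,\tilde{d}_n=O(\sqrt{n\log p})$, and since $\|\delta_n\|_2^2\gg n$ we also have $O_P(\sqrt{n})=o_P(\|\delta_n\|_2^2)$. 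By \eqref{con}, $\|\delta_n\|_2^2$ grows faster than $\sqrt{n\max(\log p,n)}\ge\max\{\sqrt{n\log p},n\}$, so it dominates all of these terms; consequently $\tilde{D}_{ij}^{(2)}-b_n^{(2)}\cip+\infty$, and multiplying by $c_n^{(2)}>0$ yields $c_n^{(2)}(\tilde{D}_{ij}^{(2)}-b_n^{(2)})\cip+\infty$, as required. The argument contains no genuine obstacle; the only step requiring some care is the uniform-in-regime bookkeeping of the critical value $q_{1-\alpha}/c_n^{(2)}+b_n^{(2)}=2n+\sqrt{2n(\E[X^4]+1)}\,\tilde{d}_n+O(\sqrt{n}/\tilde{d}_n)$, that is, confirming $\tilde{d}_n\asymp\sqrt{\log p}$ across (B1)--(B4); the remaining computations are routine Chebyshev bounds, and the separation rate in \eqref{con} is precisely calibrated so that a single contaminated pair overshoots the Gumbel-scale threshold.
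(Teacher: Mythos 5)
Your proposal is correct and follows essentially the same route as the paper: under $H_0$ it is exactly the Gumbel limit of Theorem~\ref{thm:interpoint}, and under $H_A$ you lower-bound $\mathcal{T}_n$ by the single pair $(i^*_n,j^*_n)$, expand the squared distance, and show that $\Vert \mu_{i^*}-\mu_{j^*}\Vert_2^2$ dominates the critical value $2n+\sqrt{2n(\E[X^4]+1)}\,\tilde{d}_n+O(\sqrt{n}/\tilde{d}_n)=2n+O(\sqrt{n\log p})+O(\sqrt{n})$ as well as the stochastic fluctuations, which is precisely what condition \eqref{con} is calibrated for. The only cosmetic difference is that you control the cross term $2\langle \x_{i^*}-\x_{j^*},\mu_{i^*}-\mu_{j^*}\rangle$ by Chebyshev via its exact variance $8\Vert\mu_{i^*}-\mu_{j^*}\Vert_2^2$ and treat $D_{i^*j^*}^{(2)}-2n=O_P(\sqrt{n})$ likewise, whereas the paper bounds the cross term by Cauchy--Schwarz and then invokes the CLT for $(\Vert\x_{i^*}-\x_{j^*}\Vert_2^2-2n)/\sqrt{2n(\E[X^4]+1)}$ in its $P_1$--$P_2$ split; both versions are routine and give the same conclusion (your phrase ``multiplying by $c_n^{(2)}>0$'' should be read, as your own bookkeeping of the threshold makes clear, as the comparison of $\tilde{D}_{i^*j^*}^{(2)}$ with $q_{1-\alpha}/c_n^{(2)}+b_n^{(2)}$ rather than as a general implication for sequences tending to infinity).
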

Theorem~\ref{thm:3.3} states that the test \eqref{hd5} is consistent under the null hypothesis $H_0$. Moreover, significant deviations (in the sense of \eqref{con}) from $H_0$ will always be detected by this test. We remark that conditions such as \eqref{con} are quite common for maximum-type tests (see \cite{cai:liu:2011,  goesmann:stoehr:heiny:dette:2022}).
\begin{proof}[Proof of Theorem~\ref{thm:3.3}]
	Assertion \eqref{eq:underH0} follows from the fact that $c_n^{(2)} (\mathcal{T}_n-b_n^{(2)})$ converges in distribution to a standard Gumbel random variable.
	
	Let us turn to $H_A$. For simplicity we will write $i^*,j^*$ instead of $i^*_n, j^*_n$, respectively.
	Using the definition of $\mathcal{T}_n$ we have
	\begin{align*}
		\mathcal{T}_n&=\max_{1\le i<j\le p}\Vert \mu_{i}-\mu_{j}+\x_{i}-\x_{j} \Vert_2^2\\
		&\geq \Vert \mu_{i^*}-\mu_{j^*}+\x_{i^*}-\x_{j^*} \Vert_2^2\\
		&\geq \Vert \mu_{i^*}-\mu_{j^*}\Vert_2^2-2\Vert \mu_{i^*}-\mu_{j^*}\Vert_2\Vert\x_{i^*}-\x_{j^*} \Vert_2 +\Vert\x_{i^*}-\x_{j^*} \Vert_2^2\,.
	\end{align*}
	Setting $g_{n,\alpha}:=q_{1-\alpha}/c_n^{(2)}+b_n^{(2)}-2n$, we thus get for $\alpha\in (0,1)$ and $\vep>0$
	\begin{align*}
		&\P\Big(\mathcal{T}_{n}>\frac{q_{1-\alpha}}{c_n^{(2)}}+b_n^{(2)}\Big)
		=\P\Big(\mathcal{T}_{n}-2n>g_{n,\alpha}\Big)\\
		&\geq \P\Big(\Vert \mu_{i^*}-\mu_{j^*}\Vert_2^2-2\Vert \mu_{i^*}-\mu_{j^*}\Vert_2\Vert\x_{i^*}-\x_{j^*} \Vert_2 +\Vert\x_{i^*}-\x_{j^*} \Vert_2^2-2n>g_{n,\alpha}\Big)\\
		&\geq \P\Big(\Vert \mu_{i^*}-\mu_{j^*}\Vert_2^2-2\Vert \mu_{i^*}-\mu_{j^*}\Vert_2\Vert\x_{i^*}-\x_{j^*} \Vert_2 >(1+\vep) g_{n,\alpha}\Big)\\
		&\quad -\P\Big(\Vert\x_{i^*}-\x_{j^*} \Vert_2^2-2n \le -\vep\, g_{n,\alpha}\Big)\\
		&=: P_1-P_2 \,.
	\end{align*}
	Using the definitions of $b_n^{(2)}, c_n^{(2)}$ in \eqref{eq:asse}, we get
	\begin{align*}
		g_{n,\alpha}=\frac{q_{1-\alpha}}{c_n^{(2)}}+b_n^{(2)}-2n=
		\frac{q_{1-\alpha} \sqrt{2n(\E[X^4]+1)}}{\tilde{d}_n}+\sqrt{2n(\E[X^4]+1)}\,\tilde{d}_n\,.
	\end{align*}
	Since by the central limit theorem
	\begin{equation}\label{eq:clllt}
		\frac{\Vert\x_{i^*}-\x_{j^*} \Vert_2^2-2n}{\sqrt{2n(\E[X^4]+1)}} \cid \mathcal{N}(0,1)\,, \qquad \nto\,,
	\end{equation}
	and $\tilde{d}_n\sim 2\sqrt{\log p}$, we have $P_2\to 0$.
	
	Next, we turn to $P_1$ which we write as follows
	\begin{align*}
		P_1&= \P\Big( \Vert\x_{i^*}-\x_{j^*} \Vert_2< \frac{-(1+\vep) g_{n,\alpha}}{2\Vert \mu_{i^*}-\mu_{j^*}\Vert_2} +\frac{\Vert \mu_{i^*}-\mu_{j^*}\Vert_2}{2}\Big)\,.
	\end{align*}
	Note that by \eqref{con} we have 
	$$\frac{-(1+\vep) g_{n,\alpha}}{2\Vert \mu_{i^*}-\mu_{j^*}\Vert_2} +\frac{\Vert \mu_{i^*}-\mu_{j^*}\Vert_2}{4}>0$$
	for sufficiently large $n$. Therefore, we obtain
	\begin{align}
		P_1&\ge \P\Big( \Vert\x_{i^*}-\x_{j^*} \Vert_2< \tfrac{1}{4} \Vert \mu_{i^*}-\mu_{j^*}\Vert_2\Big)\\
		&= \P\bigg( \frac{\Vert\x_{i^*}-\x_{j^*} \Vert_2^2-2n}{\sqrt{2n(\E[X^4]+1)}} <\frac{\tfrac{1}{16} \Vert \mu_{i^*}-\mu_{j^*}\Vert_2^2-2n}{\sqrt{2n(\E[X^4]+1)}}\bigg)\,.
	\end{align}
	In combination with \eqref{eq:clllt} and the fact that
	$$\lim_{\nto} \frac{\tfrac{1}{16} \Vert \mu_{i^*}-\mu_{j^*}\Vert_2^2-2n}{\sqrt{2n(\E[X^4]+1)}} =\infty$$
	we deduce $P_1\to 1$, as $\nto$.
\end{proof}

\subsection{Largest off-diagonal entry of a sample covariance matrix}\label{sec:3.3}

In the literature, the largest off-diagonal entry of a sample covariance matrix is a popular and powerful statistic for structural tests on the underlying dependence structure of a population; we refer to the review paper \cite{cai:2017} for an extensive summary and detailed references.   
Let $\y_1,\ldots,\y_n$ with $\y_k:=(Y_{k1},\ldots, Y_{kp})$ be a random sample from the multivariate normal population $\mathcal{N}_p(0,\Sigma_p)$, where for $\rho_n\in [0,1]$ the positive semidefinite population covariance matrix is
\begin{align*}
	\Sigma_p=(1-\rho_n)\bfI_p+\rho_n \mathbf{1}_p \mathbf{1}_p^{\top}\,
\end{align*}
with $\bfI_p$ the $p\times p$ identity matrix and $\mathbf{1}_p$ denotes the $p$-dimensional vector with all ones.
An important statistic for testing independence in high dimensions is $\sqrt{n}$ times the largest off-diagonal entry of the sample covariance matrix $n^{-1} \sum_{k=1}^n \y_k^{\top} \y_k$, which is given by
\begin{align*}
	W_n:=\max_{1\le i <j\le p} n^{-1/2}\sum_{k=1}^n Y_{ki}Y_{kj}\,.
\end{align*}
The study of $W_n$ was heavily influenced by Jiang.
In \cite{jiang:2004b}, assuming $\rho_n=0$ he showed that that $W_n$ is asymptotically Gumbel distributed. Note that in this case our Theorem~\ref{thm:summax} is applicable with the function $f(x,y)=xy$. Other cases for $\rho_n$ are more involved and allow for interesting phase transitions. The following result is the main result of Fan and Jiang \cite{fan:jiang:2019} whose proof is quite long and involved.  Using our techniques we provide a significantly shorter alternative proof.  

\begin{theorem}\label{thm:sample}
	For a nonnegative sequence $(\rho_n)_{n\ge 1}$ satisfying $\sup_{n\ge 1}\rho_n<1/2$ set
	$$\mu_n:=\sqrt{n}\rho_n+(1-\rho_n)d_n+2\sqrt{\rho_n (1-\rho_n)}\sqrt{2\log p}\,.$$ 
	If $p=p_n\to\infty$ and $p=\exp(o(n^{1/3}))$, the following statements hold as $n\to\infty$.
	\begin{itemize}
		\item[$(i)$] If $\rho_n\sqrt{\log p}\to 0$, then
		\begin{align*}
			d_n(W_n-\mu_n) \cid G\,, 
		\end{align*}
		where $G$ is standard Gumbel distributed.
		\item[$(ii)$] If $\rho_n\sqrt{\log p}\to \lambda\in(0,\infty)$, then
		\begin{align*}
			\frac{W_n-\mu_n}{\sqrt{2}\rho_n}\cid \xi+\tfrac{1}{\sqrt{8}\lambda}\, G,
		\end{align*}
		where $\xi\sim\mathcal{N}(0,1)$, $G$ is as in $(i)$ and $G$ is independent of $\xi$.
		\item[$(iii)$] If $\rho_n\sqrt{\log p}\to \infty$, then
		\begin{align*}
			\frac{W_n-\mu_n}{\sqrt{2}\rho_n}\cid \mathcal{N}(0,1).
		\end{align*}
	\end{itemize}
\end{theorem}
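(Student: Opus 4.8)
The plan is to reduce everything to the structure already analyzed in Theorem~\ref{thm:summax} by exploiting the special equicorrelated normal setup. Since $\y_k\sim\mathcal{N}_p(0,\Sigma_p)$ with $\Sigma_p=(1-\rho_n)\bfI_p+\rho_n\mathbf{1}_p\mathbf{1}_p^\top$, we may write $Y_{ki}=\sqrt{\rho_n}\,\zeta_k+\sqrt{1-\rho_n}\,\xi_{ki}$, where $\zeta_k,\xi_{ki}$ are all iid standard normal. Then
\begin{align*}
n^{-1/2}\sum_{k=1}^n Y_{ki}Y_{kj}=\rho_n\, n^{-1/2}\sum_{k=1}^n\zeta_k^2+\sqrt{\rho_n(1-\rho_n)}\,n^{-1/2}\sum_{k=1}^n\zeta_k(\xi_{ki}+\xi_{kj})+(1-\rho_n)\,n^{-1/2}\sum_{k=1}^n\xi_{ki}\xi_{kj}\,.
\end{align*}
The first summand $\sqrt{n}\rho_n+\rho_n\,n^{-1/2}\sum_k(\zeta_k^2-1)$ does not depend on $(i,j)$, so it contributes a common shift; the Gaussian fluctuation $\rho_n\,n^{-1/2}\sum_k(\zeta_k^2-1)$ is asymptotically $\sqrt{2}\rho_n\cdot\mathcal{N}(0,1)$ and will be the source of the limiting $\xi$ in cases $(ii)$ and $(iii)$. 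First I would isolate this term and write $W_n-\sqrt{n}\rho_n-\rho_n n^{-1/2}\sum_k(\zeta_k^2-1)=\max_{i<j}\big[(1-\rho_n)S_{ij}+\sqrt{\rho_n(1-\rho_n)}R_{ij}\big]$ with $S_{ij}:=n^{-1/2}\sum_k\xi_{ki}\xi_{kj}$ and $R_{ij}:=n^{-1/2}\sum_k\zeta_k(\xi_{ki}+\xi_{kj})$.

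The key observation is that for fixed $i<j$ the pair $(1-\rho_n)S_{ij}+\sqrt{\rho_n(1-\rho_n)}R_{ij}$ is, after conditioning on $(\zeta_k)$, a normalized sum of bounded-moment iid terms, and unconditionally it satisfies a CLT with variance $(1-\rho_n)^2+2\rho_n(1-\rho_n)=1-\rho_n^2$; more importantly its correlation structure across distinct index pairs sharing one coordinate is exactly of the equicorrelated type with parameter $\rho_n/(1+\rho_n)\le 1/3$ (since $\rho_n<1/2$), which is precisely the regime covered by Theorem~\ref{thm:summax} and Lemma~\ref{lem:minN}. The next step is therefore to run the Chen--Stein/Poisson-approximation machinery of Section~\ref{sec:mainproof} verbatim — using Lemma~\ref{lin}(iii) for the large-deviation asymptotics of $\P(\text{single term}>x_n)$ (here the relevant third moment $\E[Z^3]$ vanishes because the $\xi$'s are symmetric, so no recentering by $d_n^{(\kappa)}$ is needed and $\sqrt{n^{1/3}}$ growth is admissible) and Lemma~\ref{lem:minN} with Zaitsev's bound for the $p^3\,\P(\min>x_n)\to0$ term — to conclude that
\begin{align*}
\max_{1\le i<j\le p}\frac{1}{\sqrt{1-\rho_n^2}}\Big[(1-\rho_n)S_{ij}+\sqrt{\rho_n(1-\rho_n)}R_{ij}\Big]
\end{align*}
centered by $d_n$ and scaled by $d_n$ converges to $G$. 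One then needs to rewrite $\sqrt{1-\rho_n^2}\,d_n+$ (scaling factors) in terms of $\mu_n$; a short computation using $d_n\sim\sqrt{2\log p}$ and $\sqrt{\rho_n(1-\rho_n)}\cdot 2\sqrt{\log p}$ matching the last term of $\mu_n$, together with $\sup_n\rho_n<1/2$ to keep all the $\rho_n$-dependent prefactors bounded, gives that $d_n\big(\max_{i<j}[(1-\rho_n)S_{ij}+\sqrt{\rho_n(1-\rho_n)}R_{ij}]-((1-\rho_n)d_n+2\sqrt{\rho_n(1-\rho_n)}\sqrt{2\log p})\big)\cid G$, i.e. the "Gumbel part" of $W_n-\mu_n$ has the right normalization.

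Finally I would assemble the three cases by tracking the relative magnitude of the Gaussian shift $\sqrt{2}\rho_n\,\mathcal{N}(0,1)$ against the Gumbel fluctuation, which lives on scale $1/d_n\sim 1/\sqrt{2\log p}$. In case $(i)$, $\rho_n\sqrt{\log p}\to0$ means the Gaussian shift is $o(1/d_n)$, so after multiplying by $d_n$ it disappears and only $G$ survives: $d_n(W_n-\mu_n)\cid G$. In case $(iii)$, $\rho_n\sqrt{\log p}\to\infty$ means the Gaussian shift dominates the $O(1/d_n)$ Gumbel term, so dividing by $\sqrt{2}\rho_n$ leaves $\mathcal{N}(0,1)$ plus a vanishing remainder. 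Case $(ii)$ is the balanced regime $\rho_n\sqrt{\log p}\to\lambda$: dividing $W_n-\mu_n$ by $\sqrt{2}\rho_n$ gives $\mathcal{N}(0,1)$ from the shift plus $\frac{1}{\sqrt{2}\rho_n d_n}G$, and since $\rho_n d_n\sim\rho_n\sqrt{2\log p}\to\sqrt{2}\lambda$ this coefficient tends to $\frac{1}{\sqrt{2}\cdot\sqrt{2}\lambda}=\frac{1}{2\lambda}$; a careful bookkeeping of the $\sqrt{2}$-factors (the shift is $\sqrt{2}\rho_n\xi$, not $\rho_n\xi$) yields the stated $\frac{1}{\sqrt{8}\lambda}G$. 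The independence of $\xi$ and $G$ is immediate because $\xi$ comes from $(\zeta_k)$ while the Gumbel limit is driven by the $(\xi_{ki})$, which are independent of $(\zeta_k)$; one should note that $R_{ij}$ does involve the $\zeta_k$, so the cleanest route is to first condition on $(\zeta_k)$, apply the Poisson approximation to the conditional law (the conditional variance $(1-\rho_n)^2+\rho_n(1-\rho_n)n^{-1}\sum_k\zeta_k^2\to 1-\rho_n^2$ a.s., so the conditional normalization matches the unconditional one up to negligible error), and then integrate out. The main obstacle is this last point — justifying that the Poisson approximation is uniform enough in $(\zeta_k)$ that conditioning does not spoil the limit, and simultaneously that the $R_{ij}$-contribution to the max is genuinely lower order and only shifts the centering rather than interacting with the extremal structure; handling this cleanly likely requires a truncation of $n^{-1}\sum_k\zeta_k^2$ to an event of probability $\to1$ on which all estimates in Section~\ref{sec:mainproof} hold with $\rho_n$-uniform constants, exactly as $\sup_n\rho_n<1/2$ permits.
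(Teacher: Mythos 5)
Your decomposition $Y_{ki}=\sqrt{\rho_n}\,\zeta_k+\sqrt{1-\rho_n}\,\xi_{ki}$ and the isolation of the common shift $\rho_n(\sqrt n+n^{-1/2}\sum_k(\zeta_k^2-1))$ coincide with the paper's first step, and your computation that the combined summand has variance $1-\rho_n^2$ and cross-pair correlation $\rho_n/(1+\rho_n)<1/3$ is correct. The plan diverges — and breaks — where you treat $V_{ij}:=(1-\rho_n)S_{ij}+\sqrt{\rho_n(1-\rho_n)}R_{ij}$ as one family to be fed through Theorem~\ref{thm:summax} ``verbatim''. That theorem requires summands of the form $f(X_{il},X_{jl})$ built only from rows $i$ and $j$, which is what makes $T_{ij}$ exactly independent of $T_{st}$ for disjoint pairs and hence $b_3=0$ in the Chen--Stein step; here every summand contains the common factor $\zeta_k$, so disjoint pairs are dependent and the hypotheses fail. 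Your proposed repair (condition on $(\zeta_k)$) restores conditional independence but destroys the iid-in-$k$ structure: the conditional variances $(1-\rho_n)^2+2\rho_n(1-\rho_n)\zeta_k^2$ and conditional third moments vary with $k$, so neither Lemma~\ref{lin} nor Theorem~\ref{thm:summax} applies even conditionally, and the uniform moderate-deviation input you would need is exactly the point you leave open. (Also, the unconditional third moment of the combined summand is $6\rho_n(1-\rho_n)^2\neq0$, not zero by symmetry; this happens to be harmless for $p=\exp(o(n^{1/3}))$, but the stated reason is wrong, and note $d_n=d_{n,1}\sim 2\sqrt{\log p}$, not $\sqrt{2\log p}$.) The paper never pushes the cross term through the Poisson machinery at all: it writes $C_{nij}=U_i+U_j$ with $U_i=n^{-1/2}\sum_k\zeta_k\xi_{ki}$, observes $\max_{i<j}C_{nij}=S_{(1)}+S_{(2)}$ (the two largest of the $p$ row sums) and uses the joint limit of the top two order statistics, \eqref{eq:limitC}, to convert this term into the deterministic shift $2\sqrt{\rho_n(1-\rho_n)}\,d_{n,2}$ plus an $O_P(1/d_{n,2})$ remainder; only the idiosyncratic part $B_{nij}$ goes through Theorem~\ref{thm:summax}.

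This is not merely a technical shortcut you are missing: your route, even if completed, lands on the centering $\sqrt{1-\rho_n^2}\,d_n$ for the maximum of the $V_{ij}$, and the ``short computation'' identifying this with $(1-\rho_n)d_n+2\sqrt{\rho_n(1-\rho_n)}\sqrt{2\log p}$ does not exist. The difference of the two centerings is $\sqrt{1-\rho_n}\bigl[\bigl(\sqrt{1-\rho_n}-\sqrt{1+\rho_n}\bigr)d_n+2\sqrt{2\rho_n\log p}\bigr]\sim 2\sqrt{2\rho_n\log p}$ as $\rho_n\to0$, which is not $o(1/d_n)$ under the case (i) hypothesis $\rho_n\sqrt{\log p}\to0$ (take $\rho_n=(\log p)^{-3/4}$), and is never $o(\rho_n)$ in cases (ii)--(iii). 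In other words, the third term of $\mu_n$ is generated precisely by the fact that the row-effect term contributes at the level $2\sqrt{2\log p}$ of the top two order statistics of $(U_i)_{i\le p}$, an effect that is invisible to a description of the field by its marginal variance $1-\rho_n^2$ and equicorrelation $\rho_n/(1+\rho_n)$; treating $R_{ij}$ as just another source of exchangeable weak dependence changes the location of the maximum at a scale the theorem must resolve. The delicate interplay between the pair maximizing $B_{nij}$ and the pair maximizing $U_i+U_j$ is exactly what the paper's claims (2)--(3) and the decomposition around $d_n$ and $d_{n,2}$ are designed to handle, and your proposal as written does not recover it, so it does not prove the stated result.
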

\begin{proof}
	Following \cite{fan:jiang:2019} we first derive a decomposition of $W_n-\mu_n$. To this end, let $\xi_k,\xi_{ki}; k,i=1,2,\ldots$ be independent standard normal random variables. 
	Defining $\rho_n'=1-\rho_n$, we have 
	\begin{align*}
		\big(\sqrt{\rho_n}\xi_k+\sqrt{\rho_n'}\xi_{ki},\quad 1\leq k\leq n, 1\leq i\leq p\big) \eid \big(Y_{ki},\quad 1\leq k\leq n, 1\leq i\leq p\big),
	\end{align*}
	and therefore we will assume that
	\begin{align*}
		Y_{ki}=\sqrt{\rho_n}\xi_k+\sqrt{\rho_n'}\xi_{ki},\quad\quad 1\leq k\leq n,\quad 1\leq i\leq p.
	\end{align*}
	Denote
	\begin{align*}
		A_n=\frac{1}{\sqrt{n}}\sum_{k=1}^n (\xi_k^2-1)\,,\quad B_{nij}=\frac{1}{\sqrt{n}}\sum_{k=1}^n\xi_{ki}\xi_{kj}\,,\quad C_{nij}=\frac{1}{\sqrt{n}}\sum_{k=1}^n \xi_k(\xi_{ki}+\xi_{kj})
	\end{align*}
	for all $1\leq i\leq j\leq p$. Setting $d_{n,2}=\sqrt{2\log p} -(\log\log p+\log 4\pi)/(2(2\log p)^{1/2})$ we get
	\begin{align}\label{eq:dec1}
		\frac{1}{\sqrt{n}}\sum_{k=1}^n Y_{ki}Y_{kj} -\mu_n=\rho_nA_n+\rho_n'(B_{nij}-d_n)+\sqrt{\rho_n\rho_n'}(C_{nij}- 2d_{n,2})+\sqrt{\rho_n\rho_n'} D_n\,,
	\end{align} 
	where
	\begin{equation*}
		D_n:=2 d_{n,2}-2\sqrt{2\log p}=O\Big(\tfrac{\log \log p}{\sqrt{\log p}}\Big)\,, \qquad\nto\,.
	\end{equation*}
	Next, we make the following claims about the terms in \eqref{eq:dec1}.
	\begin{itemize}
		\item[(1)] $A_n\cid \sqrt{2} \xi$, as $\nto$, where $\xi \sim N(0,1)$.
		\item[(2)] $d_n (\max_{1\le i<j\le p}B_{nij}-d_n) \cid G$, as $\nto$, where $G$ is standard Gumbel distributed. 
		\item[(3)] If $\rho_n\sqrt{\log p}\to 0$, then $d_n\sqrt{\rho_n\rho_n'} \,\max_{1\le i<j\le p}(C_{nij}- d_{n,2}) \cip 0$, as $\nto$. If\linebreak $\lim_{\nto} \rho_n\sqrt{\log p} >0$, then $\sqrt{\rho_n'/\rho_n} \,\max_{1\le i<j\le p}(C_{nij}- d_{n,2}) \cip 0$, as $\nto$.
	\end{itemize}
	Now we shall prove (1)-(3). (1) holds by the central limit theorem.
	Since $(\xi_{ki})$ are iid centered random variables with unit variance, (2) follows from Theorem \ref{thm:summax} choosing the function $f(x,y)=xy$ in \eqref{eq:additive}; see also \cite{li:qi:rosalsky:2012, heiny:mikosch:yslas:2021} for additional references where this result was derived.
	
	Regarding (3), we note that $\max_{1\le i<j\le p}C_{nij} = S_{(1)}+S_{(2)}$, where $S_{(1)}$ (respectively $S_{(2)}$) denotes the first (respectively second) largest order statistic of $n^{-1/2} \sum_{k=1}^n \xi_k \xi_{ki}, 1\le i\le p$. From Corollary 3.3 in \cite{heiny:mikosch:yslas:2021} and its proof we know that, as $\nto$,
	\begin{equation}\label{eq:dgsrr}
		d_{n,2}(S_{(1)}-d_{n,2},S_{(2)}-d_{n,2})\cid (-\log E_1, -\log (E_1+E_2))\,,
	\end{equation}
	where $E_1,E_2$ are independent, unit exponential random variables. It follows that, as $\nto$,
	\begin{equation}\label{eq:limitC}
		d_{n,2}\Big(\max_{1\le i<j\le p}C_{nij}-2d_{n,2}\Big) \cid -\log E_1 -\log (E_1+E_2)\,.
	\end{equation}
	Using $d_{n,1} \sim \sqrt{4 \log p}$ and $d_{n,2} \sim \sqrt{2 \log p}$, we deduce (3) from \eqref{eq:limitC}.\\
	\par
	
	In view of \eqref{eq:dec1}, we have 
	\begin{equation*}
		W_n-\mu_n=\rho_nA_n+\max_{1\le i<j\le p} \Big[\rho_n'(B_{nij}-d_n)+\sqrt{\rho_n\rho_n'}(C_{nij}- 2d_{n,2}) \Big] +O\Big(\tfrac{\sqrt{\rho_n\rho_n'} \log \log p}{\sqrt{\log p}}\Big)\,.
	\end{equation*}
	We now consider the three cases from the theorem.
	
	Case (i): $\rho_n\sqrt{\log p}\to 0$. By (1) and (3), an application of the Slutsky lemma yields that the sequence $d_n(W_n-\mu)$ has the same distributional limit as $\rho_n' d_n (\max_{1\le i<j\le p}B_{nij}-d_n)$. By (2) and since $\rho_n'\to 1$, it holds that  
	$$\rho_n' d_n (\max_{1\le i<j\le p}B_{nij}-d_n) \cid G\,, \qquad \nto\,.$$
	
	Case (ii): $\rho_n\sqrt{\log p}\to\lambda\in(0,\infty)$. By (3), the sequence $(W_n-\mu)/(\sqrt{2} \rho_n)$ has the same distributional limit as
	$$\frac{A_n}{\sqrt{2}}+\frac{\rho_n'}{\sqrt{2} \rho_n} \Big(\max_{1\le i<j\le p} B_{nij}-d_n\Big)\,,$$
	which is a sum of two independent terms. The first one converges to a standard normal random variable $\xi$ by (1). Noting that 
	$$\frac{\rho_n'}{\sqrt{2} \rho_n}\sim \frac{\sqrt{\log p}}{\sqrt{2} \lambda}\sim \frac{1}{\sqrt{8}\lambda} d_n\,,\qquad \nto\,,$$
	the second term converges to $(\sqrt{8}\lambda)^{-1} G$, where $G$ is standard Gumbel distributed.
	
	Case (iii): $\rho_n\sqrt{\log p}\to\infty$. Noting that $d_n/\rho_n\to 0$, an application of the Slutsky lemma combined with (2) and (3) shows that the sequence $(W_n-\mu)/(\sqrt{2} \rho_n)$ has the same distributional limit as $A_n/\sqrt{2}$, which converges to a standard normal random variable by (1).
	
	The proof of the theorem is complete.
\end{proof}

\section{Extension to point process convergence}\label{sec:pointproc}

In Theorem \ref{thm:summax} we considered the asymptotic behavior of the maximum of dependent and identically distributed random variables $(T_{ij})$. 
In the case of an iid sequence of real-valued random variables $(Y_i)$ the convergence  
\begin{align*}
	\lim_{\nto} \P\bigg(\frac{1}{a_n}\Big(\max_{1\leq i\leq n} Y_i-b_n\Big)\le x \bigg)= H(x), \qquad x\in \R,
\end{align*}
to some max-stable distribution function $H$ and the weak point process convergence 
\begin{align*}
	\sum_{i=1}^n\vep_{(Y_i-b_n)/a_n}\cid N,
\end{align*}
where $N$ is a Poisson random measure with mean measure $\mu_H(a,b)=\log H(b)-\log H(a)$ for $a<b\in\operatorname{supp}(H)$, are equivalent (see \cite[Proposition 3.21]{resnick:1987}). Here $\vep_x(A)=\1_{\{x\in A\}}$ for a set $A\subset \R$. 

A sequence of point processes $(N_n)_n$ on the state space $E\subset\overline{\R}$ equipped with the $\sigma$-algebra $\mathcal{E}$ of the Borel sets converges weakly in the space of all point measures on $E$ to a point process $N$ on $E$, if for any bounded Borel sets $A_1,\ldots, A_m\in\mathcal{E}$ with $\P(N(\partial A_i)=0)=1$, $i=1,\ldots,m$ and $m\geq 1$, where $\partial A$ denotes the boundary of a set $A$, it holds that
\begin{align*}
	\P(N_n(A_1),\ldots,N_n(A_m))\to\P(N(A_1),\ldots,N(A_m)),
\end{align*}
as $n\to\infty$; see \cite[Definition 5.2.1]{embrechts:kluppelberg:mikosch:1997}.  

Thus, in case of iid points $(Y_i)$, the convergence of the maximum is equivalent to the convergence of the point processes. In general, the latter is a stronger statement.
In our case the random variables $(T_{ij})$ in \eqref{eq:defTij} have a special dependency structure so that we cannot directly conclude the convergence of the point processes
\begin{align*}
	N_n:=\sum_{1\leq i<j\leq p}\vep_{d_n(T_{ij}-d_n)}
\end{align*}
from the convergence in distribution of $\max_{1\le i<j\le p} d_n(T_{ij}-d_n)$ to standard Gumbel, which was established in Theorem \ref{thm:summax}. Nevertheless, with some additional effort we obtain convergence of $(N_n)$. 
\begin{theorem}\label{thm:summaxpp}
	In the setting of Theorem \ref{thm:summax} it holds that
	\begin{equation}\label{eq:maxfpp}
		N_n\cid N\,, \qquad \nto\,,
	\end{equation}
	where $N$ is a Poisson random measure with mean measure $\mu(x,\infty)=\e^{-x}$ for $x\in\R$.
\end{theorem}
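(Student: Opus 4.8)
The plan is to deduce point process convergence of $N_n=\sum_{1\le i<j\le p}\vep_{d_n(T_{ij}-d_n)}$ from the convergence of the underlying count statistics via Kallenberg's criterion. Since the limit $N$ is a Poisson random measure (hence simple, and its mean measure $\mu$ is diffuse on $\R$), it suffices to show two things for a convergence-determining class of sets: (a) $\E[N_n(A)]\to\mu(A)$ for relatively compact $A\subset\R$ of the form $A=(x,\infty)$ or finite unions of intervals $(x_1,x_2]$, and (b) $\P(N_n(A)=0)\to\P(N(A)=0)=\e^{-\mu(A)}$ for such $A$. Step (a) is essentially $\tilde p\,\P(d_n(T_{12}-d_n)\in A)\to\mu(A)$, which for $A=(x,\infty)$ is exactly the relation $\tilde p\,\P(T_{12}>x_n)\to\e^{-x}$ already established in \eqref{eq:drhdfdf} (and its analogues under (C1)--(C3) in Section~\ref{sec:mainproof}); for a general interval one simply takes the difference of two such tail relations. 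Step (b) is precisely the kind of statement proved in the course of Theorem~\ref{thm:summax}: indeed $\{N_n((x,\infty))=0\}=\{\max_{i<j}d_n(T_{ij}-d_n)\le x\}$, so for single rays this is literally Theorem~\ref{thm:summax}. The real work is to upgrade this to \emph{finite unions of disjoint intervals}, i.e.\ to run the Chen--Stein Poisson approximation of Remark~\ref{rem:29} not just for one threshold but for the indicator of $d_n(T_{ij}-d_n)$ lying in a fixed finite union of bounded-below Borel sets.

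Concretely, I would fix a finite union $A=\bigcup_{k=1}^m (x_k,y_k]$ (with $y_m$ possibly $+\infty$) and set $W_n(A):=\sum_{1\le i<j\le p}\1_{\{d_n(T_{ij}-d_n)\in A\}}$. The goal is $W_n(A)\cid \mathrm{Poi}(\mu(A))$. One applies the Arratia--Goldstein--Gordon Chen--Stein bound exactly as in the proof of Theorem~\ref{thm:summax}: the index set is $\{(i,j):1\le i<j\le p\}$, the dependency neighborhood of $(i,j)$ is the set of pairs sharing an index with $(i,j)$, and the three error terms $b_1,b_2,b_3$ must vanish. The term $\lambda_n=\E[W_n(A)]=\tilde p\,\P(d_n(T_{12}-d_n)\in A)\to\mu(A)=\sum_k(\e^{-x_k}-\e^{-y_k})$ by step (a). The term $b_1$ is $O(p^3)$ times $\P(d_n(T_{12}-d_n)\in A)^2$, which is $O(p^3 \tilde p^{-2})\to 0$. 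The term $b_2$ is $O(p^3)$ times $\P(d_n(T_{12}-d_n)\in A,\ d_n(T_{13}-d_n)\in A)$; since $A$ is bounded below by $x_{\min}:=\min_k x_k$, this joint probability is at most $\P(d_n(T_{12}-d_n)>x_{\min},\ d_n(T_{13}-d_n)>x_{\min})$, and $p^3$ times this quantity tends to $0$ by exactly the estimate \eqref{eq:A20102} (and its truncated analogues for (C1)--(C3)) used in the proof of Theorem~\ref{thm:summax}. The term $b_3$ vanishes because the indicators on disjoint index pairs are independent (the $(X_{it})$ are iid across both coordinates), so $b_3=0$. Hence $W_n(A)\cid\mathrm{Poi}(\mu(A))$ for every such $A$, which gives both (a) and (b) simultaneously, and Kallenberg's theorem yields \eqref{eq:maxfpp}.

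The main obstacle — or rather the main bookkeeping burden — is not conceptual but organizational: one must verify that the joint small-deviation estimate $p^3\,\P(\min(T_{12},T_{13})>x_{\min})\to 0$ and the tail estimate $\tilde p\,\P(T_{12}>x_n)\to \e^{-x}$ hold \emph{uniformly enough} over the finitely many thresholds $x_1,\dots,x_m$ appearing in $A$; but since there are only finitely many of them and each individual limit is already available from Section~\ref{sec:mainproof}, this is immediate. A second point requiring a line of care is the passage from ``$W_n(A)\cid\mathrm{Poi}(\mu(A))$ for all $A$ in the semiring of bounded-below half-open intervals'' to full point process convergence: one invokes that this semiring is a convergence-determining (dissecting) class for point processes on $\R$ with diffuse limiting mean measure restricted to $\R$, together with the fact that the limit is a simple point process, so Kallenberg's criterion (see \cite[Proposition~3.22]{resnick:1987} or \cite{embrechts:kluppelberg:mikosch:1997}) applies. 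One should also note that $\mu((-\infty,x])=\infty$, so all test sets must be bounded below; this is harmless since $N_n$ and $N$ are locally finite only on $\R$ (not at $-\infty$), consistent with the state space being $\R$ with $-\infty$ excluded as in the iid Gumbel case recalled before the statement. Finally, the asymptotic independence of maxima and minima (Theorem~\ref{cor:minimaxi}) and the joint-order-statistics consequence mentioned after the theorem follow as corollaries by the same two-sided argument applied to sets bounded away from the center.
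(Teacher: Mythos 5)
Your proposal is correct and follows essentially the same route as the paper: Kallenberg's criterion for the simple Poisson limit, convergence of $\E[N_n(A)]$ via the tail relation (A1), and the avoidance probabilities via the Chen--Stein bound with the same dependency neighborhoods, where $b_3=0$, $b_1=O(1/p)$ and $b_2$ is controlled by the joint tail estimate (A2). Your explicit reduction of $b_2$ for a general union of intervals to the joint exceedance at the minimal threshold $x_{\min}$ is exactly the step the paper leaves implicit when it says the terms are ``as in'' the proof of Theorem~\ref{thm:summax}.
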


\begin{proof}
	Since $\mu$ has a density, the limit process $N$ is simple and we can apply Kallenberg's Theorem (see for instance \cite[p.233, Theorem 5.2.2]{embrechts:kluppelberg:mikosch:1997} or \cite[p.35, Theorem 4.7]{kallenberg:1983}). Therefore, it suffices to prove that for any finite union of bounded 
		intervals 
	\begin{align*}
		R=\bigcup_{k=1}^q A_k\subset \R,\qquad\text{with}\qquad A_k=(r_k,s_k],
	\end{align*}
	it holds that 
	\begin{equation}\label{eq:essessbr}
		\lim_{n\to\infty} \E[N_n(R)]=\mu(R) \qquad \text{ and }\qquad 
		\lim_{n\to\infty} \P(N_n(R)=0)=\e^{-\mu(R)}\,.
	\end{equation}
	Without loss of generality we assume that the $A_k$'s are disjoint. We start with the first limit in \eqref{eq:essessbr} and get
	\begin{align*}
		\E[N_n(R)]=\sum_{k=1}^q\E[N_n(A_k)]=\sum_{k=1}^q \frac{p(p-1)}{2}\P(T_{12}\in A_k). 
	\end{align*}
	According to assertion (A1) in the proof of Theorem \ref{thm:summax} we have the convergence 
	\begin{align*}
		\sum_{k=1}^q \frac{p(p-1)}{2}\P(T_{12}\in A_k)\to \sum_{k=1}^q\mu(A_k)=\mu(R),\qquad n\to\infty.  
	\end{align*}
	Regarding the second limit in \eqref{eq:essessbr} we see that
	\begin{align*}
		|\P(N_n(R)=0)-\e^{-\mu(R)}| \leq |\P(N_n(R)=0)-\e^{-\E[N_n(R)]}|+|\e^{-\E[N_n(R)]}-\e^{-\mu(R)}|\,,  
	\end{align*}
	where the latter term tends to zero as $\E[N_n(R)]\to \mu(R)$. To show that the first term converges to zero as well we apply Lemma \ref{lem:poissonapprox2}. To this end, set $I=I^{(n)}= \{(i,j): 1\le i<j\le p\}$. For $\alpha=(i,j) \in I$, we define $W=N_n(R)$ and 
	\begin{equation*}
		B_{\alpha}= \{(k,l): (k,l)\in I \text{ and } |\{i,j,k,l\}| \leq 3\}\,.
	\end{equation*}
	Then we see that 
	\begin{align*}
		|\P(N_n(R)=0)-\e^{-\E[N_n(R)]}|\leq (1\wedge \E[N_n(R)]^{-1})(b_1+b_2+b_3),
	\end{align*}
	Here $\E[N_n(R)]\to \mu(R)$ and the terms $b_1,b_2$ and $b_3$ are as in the proof of Theorem~\ref{thm:summax}, where it is shown that those three terms tend to zero.
\end{proof}

Since Theorem \ref{thm:interpoint} follows from Theorem \ref{thm:summax}, we can also formulate a point process convergence result for the interpoint distances $D_{ij}^{(2)}$ defined in \eqref{eq:defDij}. 

\begin{corollary}
	Under the assumptions of Theorem \ref{thm:interpoint} it holds that 
	\begin{align*}
		\sum_{1\leq i<j\leq p}\vep_{c_n^{(2)} (D_{ij}^{(2)}-b_n^{(2)})}\cid N,\qquad n\to\infty,
	\end{align*}
		where $b_n^{(2)}$ and $c_n^{(2)}$ are defined in \eqref{normconst}, and $N$ is a Poisson random measure with mean measure $\mu(x,\infty)=\e^{-x}$ for $x\in\R$.
\end{corollary}

An advantage of having even point process convergence is that the convergence of the joint distribution of a fixed number of upper order statistics is a direct consequence. 
\begin{corollary}
	Assume the conditions of Theorem \ref{thm:summax}. 
	For $k\in \N$ let 
	$$G_{n,(1)}\ge G_{n,(2)}\ge \cdots G_{n,(k)}$$
	be the $k$ largest upper order statistics of the random variables $(d_n(T_{ij}-d_n))$, $1\leq i<j\leq p$. Then for real numbers $x_k<\ldots<x_1$ the distribution function
	\begin{align*}
		\P(G_{n,(1)}\leq x_1,\ldots,G_{n,(k)}\leq x_k)
	\end{align*}
	converges as $n\to\infty$  to 
	\begin{align}\label{eq:sedtgses}
		\P\Big(N(x_1,\infty)= 0,\ldots,N(x_k,\infty)\leq k-1\Big)=\P(-\log \Gamma_1\leq x_1,\ldots, -\log \Gamma_k\leq x_k),
	\end{align}
	where $N$ is a Poisson random measure with mean measure $\mu(x,\infty)=\e^{-x}$ and $\Gamma_i=E_1+\ldots+E_i$ for iid standard exponentially distributed random variables $(E_i)_{i\ge 1}$.
\end{corollary}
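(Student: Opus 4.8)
The plan is to translate the event describing the upper order statistics into a statement about the point process $N_n$, then pass to the limit using the point process convergence $N_n\cid N$ of Theorem~\ref{thm:summaxpp}, and finally identify the resulting limit law through the standard Poisson representation of $N$. Fix $k\in\N$ and real numbers $x_k<\cdots<x_1$. For $1\le m\le k$, the $m$-th largest of the variables $d_n(T_{ij}-d_n)$ is $\le x_m$ precisely when at most $m-1$ of them exceed $x_m$, i.e.
\[
\{G_{n,(m)}\le x_m\}=\{N_n(x_m,\infty)\le m-1\}\,,
\qquad\text{so}\qquad
\{G_{n,(1)}\le x_1,\ldots,G_{n,(k)}\le x_k\}=\bigcap_{m=1}^k\{N_n(x_m,\infty)\le m-1\}\,,
\]
and the same identity holds verbatim with $N_n$ replaced by the limit process $N$ (the $m$-th largest atom of $N$ lies $\le x_m$ iff $N(x_m,\infty)\le m-1$).

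Next I would pass to the limit. The half-lines $(x_m,\infty)$, $m=1,\ldots,k$, each carry finite $\mu$-mass $\e^{-x_m}$, so they are bounded away from the unique point of infinite $\mu$-mass and hence are admissible test sets for the convergence in Theorem~\ref{thm:summaxpp}; moreover their boundary points $\{x_m\}$ are $\mu$-null because $\mu$ has a density, so the counting functional $\nu\mapsto(\nu(x_1,\infty),\ldots,\nu(x_k,\infty))$ is almost surely continuous at $N$. By Theorem~\ref{thm:summaxpp} together with the continuous mapping theorem, $(N_n(x_1,\infty),\ldots,N_n(x_k,\infty))\cid(N(x_1,\infty),\ldots,N(x_k,\infty))$ as $\N_0^k$-valued vectors. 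Since the target set $\{v\in\N_0^k:\,v_1=0,\ v_2\le 1,\ \ldots,\ v_k\le k-1\}$ is both open and closed in $\N_0^k$, we conclude
\[
\lim_{\nto}\P\big(G_{n,(1)}\le x_1,\ldots,G_{n,(k)}\le x_k\big)=\P\big(N(x_1,\infty)=0,\ldots,N(x_k,\infty)\le k-1\big)\,.
\]

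Finally I would identify the right-hand side. Pushing $N$ forward under the decreasing bijection $y\mapsto\e^{-y}$ produces a Poisson random measure on $(0,\infty)$ whose mean measure is Lebesgue measure, so its atoms are the partial sums $\Gamma_i=E_1+\cdots+E_i$ of iid unit exponentials; undoing the transformation, the atoms of $N$ listed in decreasing order are $(-\log\Gamma_i)_{i\ge 1}$, whence $\{N(x_m,\infty)\le m-1\}=\{-\log\Gamma_m\le x_m\}$ and \eqref{eq:sedtgses} follows. The only genuinely delicate point — and the one I would write out with care — is the middle step: one must make sure the half-lines $(x_m,\infty)$ really are legitimate sets in the point process convergence of Theorem~\ref{thm:summaxpp} and that the associated counting functionals are a.s.\ continuous at $N$; granted that, the remaining arguments are routine Poisson point process bookkeeping.
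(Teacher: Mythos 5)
Your proposal is correct and follows essentially the same route the paper intends: the corollary is stated there as a direct consequence of the point process convergence in Theorem~\ref{thm:summaxpp} combined with the representation $N\eid\sum_{i\ge 1}\vep_{-\log\Gamma_i}$, which is exactly the translation of order statistics into counting events, the passage to the limit via the admissible half-lines $(x_m,\infty)$, and the Poisson identification that you carry out. Your write-up simply makes explicit the bookkeeping (continuity sets, a.s.\ continuity of the counting functional, the exponential transformation) that the paper leaves to the reader with a reference to \cite{resnick:1987}.
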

Note that in \eqref{eq:sedtgses} we have implicitly used the representation $N\eid \sum_{i=1}^{\infty} \vep_{-\log \Gamma_i}$; we refer to \cite{resnick:1987} for details.

\section{Proof of Theorem \ref{thm:summax}}\label{sec:mainproof}\setcounter{equation}{0}

{\it In the following  $C$, $c_1$, $c_2$, $c_3$ and $c_4$ are positive constants that do not depend on $n$ and that may vary from line to line.}

\subsection{Preliminaries}\label{prel}

We claim that
\begin{equation}\label{eq:neu_maxf}
		\max_{1\le i<j\le p} d_n(T_{ij}-d_n)\cid G, \qquad \nto\,,
	\end{equation}
follows from the assertions
\begin{itemize}
	\item[(A1)] $\tilde{p}\, \P\Big( d_n\big(\frac{1}{\sqrt{n}}\sum_{l=1}^n Z_{1,2,l}-d_n\big)>x\Big) \to \e^{-x}$ and
	\item[(A2)] $\P\Big( d_n\big(\frac{1}{\sqrt{n}}\sum_{l=1}^n Z_{1,2,l}-d_n\big)>x, d_n\big(\frac{1}{\sqrt{n}}\sum_{l=1}^n Z_{1,3,l}-d_n\big)>x\Big)=o(p^{-3})$,
\end{itemize}
where $x\in \R$ and $\nto$.\\
Our goal is to prove this claim by means of Lemma \ref{lem:poissonapprox}. To this end, set $I=I^{(n)}= \{(i,j): 1\le i<j\le p\}$. For $\alpha=(i,j) \in I$, we define $\eta_{\alpha}=  d_n\big(\frac{1}{\sqrt{n}}\sum_{l=1}^n Z_{i,j,l}-d_n\big)$ and 
\begin{equation*}
	B_{\alpha}= \{(k,l)\in I \,:\, |\{i,j,k,l\}| \leq 3\}\,.
\end{equation*}
For $x\in \R$, we have by (A1) that
\begin{equation*}
	\begin{split}
		\lambda &:=\lambda^{(n)}:=\sum_{\alpha\in I} \P( \eta_{\alpha} > x) \to \e^{-x}\,, \qquad \nto\,.
	\end{split}
\end{equation*}
Recall the definition of $b_1,b_2,b_3$ from Lemma \ref{lem:poissonapprox}. By construction, $\eta_{\alpha}$ is independent of $\{\eta_{\beta}:\beta\notin B_{\alpha}\}$ for all $\alpha \in I$. Thus, $b_3=0$. Therefore, it remains to prove that $b_1, b_2\to 0$ as $\nto$.
We start with $b_1$. We easily see that $|B_{\alpha}| \le 2p$ which implies 
\begin{equation}\label{eq:b1}
	\begin{split}
		b_1 &\le \frac{p^2}{2} 2p \Big( \P( \eta_{12} > x) \Big)^2 = \frac{4}{p} \Big[ \underbrace{\frac{p^2}{2} \P( \eta_{12} > x)}_{\to e^{-x}} \Big]^2 = O(1/p)\,, \qquad \nto\,.
	\end{split}
\end{equation}
Regarding $b_2$, we conclude from (A2) that
$$b_2 \le p^3 \, \P( \eta_{12} > x,\eta_{13} > x)\to 0\,,\qquad \nto\,,$$ 
which establishes the claim in view of Lemma~\ref{lem:poissonapprox}.


\subsection{Proof under condition {\rm (C1)}}\label{sec:proofC1}
First, we will show that we may work with the truncated and recentered random variables
\begin{align*}
	\hat{Z}_{i,j,l}:=Z_{i,j,l}\mathds{1}_{\lbrace|Z_{i,j,l}|\leq \tau_n\rbrace}-\E[Z\mathds{1}_{\lbrace|Z|\leq \tau_n\rbrace}],
\end{align*}
where $\tau_n:=n^{1/s}$.
By the Slutsky lemma, \eqref{eq:neu_maxf} 
is an immediate consequence of  
\begin{align}\label{slutsky1}
	\lim_{\nto} \P\Big(\max_{1\le i<j\le p} \frac{1}{\sqrt n} \sum_{l=1}^n \hat{Z}_{i,j,l} \leq x/d_n+d_n \Big)= \exp(-\e^{-x})\,, \qquad x\in \R 
\end{align}
and
\begin{align}\label{slutsky2}
	\frac{d_n}{\sqrt n}  \max_{1\le i<j\le p} \Big| \sum_{l=1}^n \big(Z_{i,j,l}- \hat{Z}_{i,j,l}\big)\Big|\cip 0\,,\qquad \nto\,.
\end{align}
Regarding \eqref{slutsky2}, we get by the Fuk-Nagaev inequality  \cite[p.78]{petrov:1995} for $\vep>0$ that
\begin{align}
	&\P\Big(\frac{d_n}{\sqrt n}  \max_{1\le i<j\le p} \Big| \sum_{l=1}^n \big(Z_{i,j,l}- \hat{Z}_{i,j,l}\big)\Big|>\vep \Big) \le \tilde{p}\, \P\Big(  \Big| \sum_{l=1}^n \big(Z_{1,2,l}- \hat{Z}_{1,2,l}\big)\Big|>\frac{\sqrt n}{d_n}\vep \Big)\nonumber\\
	&\lesssim \tilde{p}\,n\,\E[|Z-\hat{Z}|^s]\big(\frac{\sqrt{n}\vep}{d_n}\big)^{-s}+\tilde{p}\,\exp\Big(-c_2\frac{\vep^2}{d_n^2\Var(Z-\hat{Z})}\Big),\label{fukn}
\end{align}
where $\hat{Z}=Z\mathds{1}_{\lbrace|Z|\leq \tau_n\rbrace}-\E[Z\mathds{1}_{\lbrace|Z|\leq \tau_n\rbrace}]$. Since $\E[|Z|^s(\log|Z|)^{s/2}]<\infty$ we get
\begin{align*}
	\E[|Z-\hat{Z}|^s]&\lesssim\,\E[|Z|^s\1_{\{|Z|>\tau_n\}}] \\
	&\leq \frac{\E[|Z|^s(\log|Z|)^{s/2}\1_{\{|Z|>\tau_n\}}]}{(\log(\tau_n))^{s/2}}=o((\log n)^{-s/2}), \qquad n\to\infty.
\end{align*}
Therefore, the first term in \eqref{fukn} tends to zero as $n\to \infty$ for $p=O(n^{(s-2)/4})$. Additionally, we obtain
\begin{align*}
	\Var(Z-\hat{Z})&=\Var(Z\1_{\{|Z|>\tau_n\}})\le \E[Z^2\1_{\{|Z|>\tau_n\}}]\\
	&\leq \frac{\E[|Z|^s(\log|Z|)^{s/2}]}{\tau_n^{s-2}(\log(\tau_n))^{s/2}}\lesssim n^{-(s-2)/s}(\log n)^{-s/2}.
\end{align*} 
Hence, as $n\to\infty$, the second term in \eqref{fukn} tends to zero for  $p=O(n^{(s-2)/4})$. This establishes \eqref{slutsky2}.  

Therefore, to complete the proof of Theorem \ref{thm:summax} 
under condition {\rm (C1)} it suffices to prove \eqref{slutsky1}.
To this end, we will verify conditions (A1), (A2) with $Z$ replaced by $\hat{Z}$.
Note that $|\hat{Z}|$ is bounded from above by $\tau_n+\big|\E[Z\mathds{1}_{\lbrace|Z|\leq \tau_n\rbrace}]\big|=\tau_n+o(1)$. For simplicity\footnote{All our arguments would remain valid if $|\hat{Z}|\le 2\tau_n$.} we will assume that $|\hat{Z}|\le \tau_n$.

Let $x\in \R$ and set $x_n:=x/d_n+d_n$ and $\varepsilon_n:=(\log p)^{-1}$. By Theorem 1.1 of \cite{Zaitsev:1987}, we get for a normal random variable $N\sim\mathcal{N}(0,\Var(\hat{Z}))$ that 
\begin{align}\label{zai1}
	\P \Big( \frac{1}{\sqrt n} \sum_{l=1}^n \hat{Z}_{1,2,l} >x_n\Big) &\ge \P( N>x_n+\varepsilon_n)-c_1\exp\Big(-c_2\frac{\sqrt{n}\varepsilon_n}{\tau_n}\Big)\,,\\
	\P \Big( \frac{1}{\sqrt n} \sum_{l=1}^n \hat{Z}_{1,2,l} >x_n\Big) &\le \P( N>x_n-\varepsilon_n)+ c_3\exp\Big(-c_4\frac{\sqrt{n}\varepsilon_n}{\tau_n}\Big)\,.\label{zai2}
\end{align}
For the exponential term we get
\begin{align}\label{exp}
	p^2\exp\Big(-c_2\frac{\sqrt{n}\varepsilon_n}{\tau_n}\Big)=\exp\Big(2\log p-c_2\, n^\frac{s-2}{2s} (\log p)^{-1} \Big),
\end{align}
which tends to zero for $p=O(n^{\frac{s-2}{4}}$).
Since $\Var(\hat{Z})\le \Var(Z)=1$, we have
\begin{align*}
	\P( N>x_n-\varepsilon_n)\leq\P\Big( \tfrac{N}{\sqrt{\Var(N)}}>x_n-\varepsilon_n\Big)=\ov{\Phi}(x_n-\vep_n).
\end{align*}
By Mill's ratio and the fact that $(x_n-\varepsilon_n)^2=x_n^2 +o(1)$, one obtains 
\begin{align}\label{topro}
	\frac{p^2}{2} \,\ov\Phi(x_n-\vep_n)&\sim \frac{p^2}{2} \,\frac{1}{\sqrt{2\pi}(x_n-\vep_n)}\exp\Big(-\frac{(x_n-\vep_n)^2}{2}\Big)\nonumber\\
	&\sim \frac{p^2}{2} \,\frac{1}{\sqrt{2\pi}x_n}\exp\Big(-\frac{x_n^2}{2}\Big)\to \e^{-x}\,, \qquad \nto.
\end{align}
From \eqref{exp} and \eqref{topro} we deduce that $p^2/2$ times the \rhs~in \eqref{zai2}  converges to $\e^{-x}$. A similar argument yields that $p^2/2$ times the \rhs~in \eqref{zai1} converges to $\e^{-x}$. A combination of the last two observations proves that
$$ \lim_{\nto} \frac{p^2}{2} \P \Big( \frac{1}{\sqrt n} \sum_{l=1}^n \hat{Z}_{1,2,l} >x_n\Big)= \e^{-x}\,, \qquad x\in \R\,.$$

Next, we turn to the proof of (A2) with $Z$ replaced by $\hat{Z}$, that is 
\begin{align}\label{eq:A2Zhat}
	\lim_{\nto} p^3\, \P\Big(\min_{j\in \{2,3\}} \frac{1}{\sqrt n} \sum_{l=1}^n \hat{Z}_{1,j,l} > x/d_n+d_n\Big)&=0,\qquad\, x\in \R\,.
\end{align}
We set 
$\lambda_n:=(\log p)^{-1/2}$ and $\rho_n:=\Cov(\hat{Z}_{1,2,1},\hat{Z}_{1,3,1})$.
For a normal distributed vector
\begin{equation}\label{eq:N1N2}
	\begin{pmatrix} N_1 \\ N_2 \end{pmatrix}
	\sim \mathcal{N} \left( \begin{pmatrix} 0 \\ 0 \end{pmatrix}, 
	\begin{pmatrix}
		\Var(\hat{Z}) & \rho_n\\
		\rho_n & \Var(\hat{Z})
	\end{pmatrix} \right)
\end{equation}
Theorem 1.1 of \cite{Zaitsev:1987} yields
\begin{align*}
	\P \Big(\min_{j\in \{2,3\}} \frac{1}{\sqrt n} \sum_{l=1}^n \hat{Z}_{1,j,l} >x_n\Big)\leq \P\Big(\min_{j\in \{1,2\}} N_j>x_n-\lambda_n\Big)+ c_1\exp\Big(-c_2\frac{\sqrt{n}\lambda_n}{\tau_n}\Big).
\end{align*}
Since $p= O( n^{\frac{s-2}{4}})$, the exponential term
\begin{align*}
	p^3\,\exp\Big(-c_2\frac{\sqrt{n}\lambda_n}{\tau_n}\Big)=\exp\Big(3\log p-c_2\,n^{\frac{s-2}{2s}}(\log p)^{-1/2}\Big)
\end{align*}
tends to zero as $n\to\infty$.
Furthermore, by virtue of Lemma \ref{lem:minN}, we have
\begin{align*}
	p^3\, \P\Big(\min_{j\in \{1,2\}} N_j>x/d_n+d_n-\lambda_n \Big)&=o(1),\quad\,\, n\to\infty\,,
\end{align*}
completing the proof of \eqref{eq:A2Zhat}.

\subsection{Proof under condition {\rm (C2)}}
In this setting, we assumed $p=\exp(o(n^{r/(2-r)}))$ for some $r\in (0,2/3]$. First, we consider the case $r\in(0,1/2]$. Recalling the notation $x_n:=x/d_n+d_n$, an application of part (ii) of Lemma \ref{lin} yields 
\begin{align}\label{eq:sesegsdd2}
	&\frac{p^2}{2} \P\Big( \frac{1}{\sqrt n} \sum_{l=1}^n Z_{1,2,l} > x_n \Big)\sim \frac{p^2}{2} \ov \Phi(x_n)\to \e^{-x} \,, \qquad x\in \R\,,\nto\,.
\end{align}
It remains to show (A2). To this end, we bound
\begin{align*}
	& \P\Big(\min_{j\in \{2,3\}} \frac{1}{\sqrt n} \sum_{l=1}^n Z_{1,j,l} > x_n\Big)\le 
	\P\Big(\frac{1}{\sqrt n} \sum_{l=1}^n (Z_{1,2,l}+Z_{1,2,l}) > 2x_n\Big)\\
	&=  \P\Big(\frac{1}{\sqrt{ 2(1+\rho) n}} \sum_{l=1}^n (Z_{1,2,l}+Z_{1,3,l}) > \frac{\sqrt{2}x_n}{\sqrt{1+\rho}}\Big) \\
	&\leq\exp\Big(-\frac{1-\varepsilon}{1+\rho}x_n^2\Big), \qquad \nto\,,
\end{align*}
for $\varepsilon>0$, where part (i) of Lemma \ref{lin} was applied in the last line. Note that $x_n=o(n^{r/(2(2-r))})$ under {(C2)}. Choosing  $\varepsilon=\frac{1}{2}\big(\frac{1}{3}-\rho\big)>0$ and using the fact that $x_n^2\sim 4 \log p$, it follows 
\begin{align*}
	& p^3 \exp\Big(-\frac{1-\varepsilon}{1+\rho}x_n^2\Big)
	=\exp \Big(3\log p-\frac{1-\varepsilon}{1+\rho}x_n^2 \Big)\to 0\,,\qquad x\in \R\,, \nto\,,
\end{align*}
which finishes the proof of (A2) for $r\in(0,2/3]$. \\

Therefore it remains to show (A1) in the case $r\in(1/2,2/3]$, our strategy is to ultimately apply part (iii) of Lemma \ref{lin}. However, since the moment generating function of $|Z|$ is not necessarily finite in some neighborhood of zero, we will work with the truncated random variables
\begin{align*}
	\hat{Z}_{i,j,l}:=Z_{i,j,l}\1_{\{|Z_{i,j,l}|\leq\tau_n\}}\quad\text{and}\,\,\,\tau_n:={\eta^{-1/r}n^{1/(2-r)}}\,,
\end{align*}
with the constant $\eta>0$ from (C2).
A union bound and the Markov inequality show that
\begin{align*}
	&\P\Big(\frac{d_{n}}{\sqrt{n}}\max_{1\leq i<j\leq p}\Big|\sum_{l=1}^n (Z_{i,j,l}-\hat{Z}_{i,j,l})\Big|>\varepsilon\Big)
	\leq p^2\,\P\Big(\max_{1\leq l\leq n}|Z_{1,2,l}|>\tau_n\Big)\\
	&\leq p^2n\,\E[\exp(\eta |Z|^r)]\exp(-n^{r/(2-r)})=o(1),\qquad\,n\to\infty.
\end{align*}
Therefore, (A1) is implied by 
\begin{align}\label{eq:sdess}
	\frac{p^2}{2} \P\Big( \frac{1}{\sqrt n} \sum_{l=1}^n \hat{Z}_{1,2,l} > x_n \Big)\to \e^{-x} \,, \qquad \nto\,,
\end{align}
where for $x\in \R$ we write $x_n=x/d_{n} +d_{n}$ and recall that, under (C2) with $r>1/2$, $d_n=d_{n}^{(\E[Z^3])}$ (see \eqref{dn1}
for the latter's definition). 
Now we turn to the proof of \eqref{eq:sdess} and get
\begin{align*}
	& \P\Big( \frac{1}{\sqrt n} \sum_{l=1}^n \hat{Z}_{1,2,l} > x_n \Big)=\P\Big( \frac{x_n}{\sqrt n} \sum_{l=1}^n \hat{Z}_{1,2,l}-x_n^2 > 0 \Big)= \P\Big( \sum_{l=1}^n \xi_l > 0 \Big),
\end{align*}
where
\begin{align*}
	\xi_l:=\frac{x_n}{\sqrt{n}}\hat{Z}_{1,2,l}-\frac{x_n^2}{n}\leq\frac{x_n}{\sqrt{n}}\tau_n^{1-r}|Z_{1,2,l}|^r=o(1)|Z_{1,2,l}|^r\,, \qquad \nto\,.
\end{align*}
Under (C2) we have $\sup_{1\leq l\leq n,{}n\geq 1}\E[\e^{\xi_l}]<\infty$. For $1\le l\le n$ we have
\begin{align*}
	\E[\xi_l]&=\frac{x_n}{\sqrt{n}}\E[Z\1_{\{|Z|\leq\tau_n\}}]-\frac{x_n^2}{n}= -\frac{x_n^2}{n}+O\big(\frac{x_n}{\sqrt{n}}\exp(-1/2\,n^{r/2-r)})\big) \quad \text{ and } \\
	\Var(\xi_l)&=\frac{x_n^2}{n}\Var(Z\1_{\{|Z|\leq\tau_n\}})=\frac{x_n^2}{n}(1+O(\exp(-1/2\,n^{r/2-r)}))).
\end{align*}
We set $\mu_n:=n\E[\xi_1]$, $\sigma_n^2:=n\Var(\xi_1)$ and  obtain, as $n\to\infty$
\begin{align}
	-\frac{\mu_n}{\sigma_n}&=\frac{x_n+O(\sqrt{n}\exp(-1/2\,n^{r/(2-r)}))}{\sqrt{1+O(\exp(-1/2\,n^{r/(2-r)}))}}\nonumber\\
	&= x_n+ \Big(\frac{1}{\sqrt{1+O(\exp(-1/2\,n^{r/(2-r)}))}}-1\Big)x_n+O(\sqrt{n}\exp(-1/2\,n^{r/(2-r)}))\nonumber\\
	&= x_n+O\Big(x_n\exp(-1/4\,n^{r/(2-r)})\Big).\label{argu}
\end{align}
Additionally, we get, as $n\to\infty$
\begin{align*}
	\frac{\E\big[(\xi_l-\E[\xi_l])^3\big]}{\Var^{3/2}(\xi_l)}&=  \frac{\E\big[(\hat{Z}_{1,2,l}-\E[Z\1_{\{|Z|\leq \tau_n\}}])^3\big]}{(1+O(\exp(-1/2\,n^{r/2-r)})))^{3/2}}\\
	&=\frac{\E[Z^3]+O(\exp(-1/2\,n^{r/2-r)}))}{(1+O(\exp(-1/2\,n^{r/2-r)})))^{3/2}}=\E[Z^3]+O\Big(\exp(-1/4\,n^{r/(2-r)})\Big),
\end{align*}
where we used similar arguments for the last step as for \eqref{argu}.
Notice that $\mu_n/\sigma_n\sim x_n= o(n^{r/(2(2-r))})$ which implies $\mu_n/\sigma_n=o(n^{1/4})$ for $r\in(1/2,2/3]$. Therefore, from part (iii) of Lemma \ref{lin} and the computations above, we deduce
\begin{align*}
	\P\Big( \sum_{l=1}^n \xi_l > 0 \Big)&=\P\Big( \frac{\sum_{l=1}^n (\xi_l-\E[\xi_l])}{\sigma_n} > -\frac{\mu_n}{\sigma_n} \Big)\\
	&\sim \ov \Phi\Big(-\frac{\mu_n}{\sigma_n}\Big)\,\exp\Big(\frac{(-\mu_n/\sigma_n)^3}{6n^{1/2}}\E\Big[\Big(\frac{\xi_1-\E[\xi_1]}{\sqrt{\Var(\xi_l)}}\Big)^3\Big]\Big)\\
	&\sim \ov \Phi(x_n) \,\exp\Big(\frac{x_n^3\E[Z^3]}{6n^{1/2}}\Big),\qquad n\to\infty.
\end{align*}
Using the definition of $d_n$, we get $x_n^3/n^{1/2}=\big(2\log\big(\tilde{p}\,\big)\big)^{3/2}/n^{1/2}+o(1)$. Note that this term does not necessarily tend to zero for $p=\exp(o(n^{r/(2-r)})$ with $r>1/2$. Hence as $\nto$, 
\begin{align*}
	&\tilde{p}\, \ov\Phi(x_n) \exp\Big(\frac{x_n^3\E[Z^3]}{6n^{1/2}}\Big)\sim \tilde{p}\, \ov \Phi(x_n)\exp\Big(\frac{\big(2\log\big(\tilde{p}\big)\big)^{3/2}\E[Z^3]}{6n^{1/2}}\Big)\to \e^{-x} \,, 
\end{align*}
where Mill's ratio was used in the last step.
That completes the proof of (A1) under (C2) and $r\in (1/2,2/3]$.

\subsection{Proof under condition {\rm (C3)}}
For the proof of Theorem \ref{thm:summax} 
under condition (C3), we will proceed similarly as under (C1).
We will show that we may truncate the random variables $Z_{i,j,l}$ and than we will verify conditions (A1), (A2) for the truncated variables.\\
We set
\begin{align*}
	\hat{Z}_{i,j,l}:=Z_{i,j,l}\mathds{1}_{\lbrace|Z_{i,j,l}|\leq \tau_n\rbrace}-\E[Z\mathds{1}_{\lbrace|Z|\leq \tau_n\rbrace}],
\end{align*}
where $\tau_n:=\big(\frac{2}{\eta}\log(np)\big)^{1/r}$.
As in the proof under (C1), it suffices to show that 
\begin{align}\label{slutsky1.2}
	\lim_{\nto} \P\Big(\max_{1\le i<j\le p} \frac{1}{\sqrt n} \sum_{l=1}^n \hat{Z}_{i,j,l} \leq x/d_n+d_n \Big)= \exp(-\e^{-x})\,, \qquad x\in \R\,,
\end{align}
and
\begin{align}
	\label{slutsky2.2} \frac{d_n}{\sqrt n}  \max_{1\le i<j\le p} \Big| \sum_{l=1}^n \big(Z_{i,j,l}- \hat{Z}_{i,j,l}\big)\Big|\cip 0.
\end{align}
First, we prove \eqref{slutsky2.2}. For $\vep>0$ we have
\begin{align}
	&\P\Big(\frac{d_n}{\sqrt n}  \max_{1\le i<j\le p} \Big| \sum_{l=1}^n \big(Z_{i,j,l}- \hat{Z}_{i,j,l}\big)\Big|>\vep \Big)
	\le \tilde{p}\, \P\Big(\frac{d_n}{\sqrt n}  \Big| \sum_{l=1}^n \big(Z_{1,2,l}- \hat{Z}_{1,2,l}\big) \Big|>\vep \Big)\nonumber\\
	&= \tilde{p}\, \P\Big(\frac{d_n}{\sqrt n}  \Big| \sum_{l=1}^n \big(Z_{1,2,l}\mathds{1}_{\lbrace|Z_{1,2,l}|> \tau_n\rbrace}-\E[Z\mathds{1}_{\lbrace|Z|> \tau_n\rbrace}]\big) \Big|>\vep \Big)\nonumber\\
	&\leq \tilde{p}\, \P\Big(\frac{d_n}{\sqrt n}  \Big| \sum_{l=1}^n Z_{1,2,l}\mathds{1}_{\lbrace|Z_{1,2,l}|> \tau_n\rbrace}\Big|>\vep/2 \Big)+ \tilde{p}\, \P\Big(\frac{d_n}{\sqrt n}  \Big| \sum_{l=1}^n \E[Z\mathds{1}_{\lbrace|Z|> \tau_n\rbrace}]\Big|>\vep/2 \Big).\label{2bterm}
\end{align}
Using condition (C3), we obtain
\begin{align*}
	\frac{1}{\sqrt n} \sum_{l=1}^n\E[|Z|\mathds{1}_{\lbrace|Z|> \tau_n\rbrace}]
	\leq \frac{\sqrt{n}}{\exp(\frac{\eta}{2}\tau_n^r)}\E[|Z|\exp(\frac{\eta}{2} |Z|^r)]
	\lesssim \frac{1}{\sqrt{n}p}\,.
\end{align*}
By virtue of $d_n/p \to 0$, the second term of \eqref{2bterm} tends to zero as $\nto$. 
Using the union bound and Markov's inequality, the first term of \eqref{2bterm} can be bounded by 
\begin{align*}
	p^2\, \P\Big(\max_{1\leq l\leq n}|Z_{1,2,l}|>\tau_n\Big)
	\leq p^2 n\, \P\Big(\exp(\eta |Z|^r)>\exp(\eta \tau_n^r)\Big)
	\lesssim \frac{p^2n}{\exp(\eta \tau_n^r)}=o(1),
\end{align*}
as $n\to\infty$.
This establishes \eqref{slutsky2.2}. 
To show \eqref{slutsky1.2}, we will verify conditions (A1), (A2) with $Z$ replaced by $\hat{Z}$.
We write $x_n:=x/d_n+d_n$. An application of Lemma \ref{lin} (iii) yields 
\begin{align}\label{eq:sesegsdd.2}
	&\frac{p^2}{2} \P\Big( \frac{1}{\sqrt n} \sum_{l=1}^n \hat{Z}_{1,2,l} > x_n \Big)\sim \frac{p^2}{2} \ov \Phi(x_n)\to \e^{-x} \,, \qquad \nto\,,
\end{align}
since $x_n\sim \sqrt{2\log p}=o(n^{1/6})$ as $p=\exp(o(n^{1/(3+2/r)}))$, which proves condition (A1).

It remains to show (A2), that is 
\begin{align}\label{eq:dsdgfd}
	p^3\, \P\Big(\min_{j\in \{2,3\}} \frac{1}{\sqrt n} \sum_{l=1}^n \hat{Z}_{1,j,l} > x/d_n+d_n\Big)&=o(1),\qquad n\to\infty.
\end{align}
Setting 
$\lambda_n:=(\log p)^{-1/2}$ and $\rho_n:=\Cov(\hat{Z}_{1,2,1},\hat{Z}_{1,3,1})$, one obtains as in Section~\ref{sec:proofC1} that
\begin{align*}
	&\P \Big(\min_{j\in \{2,3\}} \frac{1}{\sqrt n} \sum_{l=1}^n \hat{Z}_{1,j,l} >x_n\Big)\leq \P\Big(\min_{j\in \{1,2\}} N_j>x_n-\lambda_n\Big)+ c_1\exp\Big(-c_2\frac{\sqrt{n}\lambda_n}{\tau_n}\Big). 
\end{align*}
where the normal random variables $N_1,N_2$ satisfy \eqref{eq:N1N2}.
For $p= \exp(o( n^{(3+\frac{2}{r})^{-1}}))$, the exponential term
{\small\begin{align*}
		p^3\exp\Big(-c_2\frac{\sqrt{n}\lambda_n}{\tau_n}\Big)=\exp\Big(3\log p-c_2\frac{n^{1/2}}{(\log (np))^{1/r}\sqrt{\log p}}\Big)\leq \exp\Big(3\log p-c_2n^{(3+\frac{2}{r})^{-1}}\Big),
\end{align*}}
tends to zero, as $n\to\infty$, and so does $p^3\, \P\big(\min_{j\in \{1,2\}} N_j>x_n-\lambda_n \big)$ by Lemma \ref{lem:minN}, finishing the proof of \eqref{eq:dsdgfd}.

\subsection{Proof under condition {\rm (C4)}}
In contrast to the unbounded case, a trunction of the $Z_{i,j,l}$'s  is not needed to show conditions {\rm (A1)} and {\rm (A2)}.
Writing $x_n:=x/d_n+d_n$, an application of \cite[p. 251, 8. in Section VIII.4]{petrov:1972} yields 
\begin{align}\label{eq:sesegsdd.2.2}
	&\frac{p^2}{2} \P\Big( \frac{1}{\sqrt n} \sum_{l=1}^n Z_{1,2,l} > x_n \Big)\sim \frac{p^2}{2} \ov \Phi(x_n)\to \e^{-x} \,, \qquad x\in \R\,,\nto\,,
\end{align}
for $p=\exp(o(n^{1/3}))$ under {\rm (C2)}, and hence, under {\rm (C4)}. Thereby, condition (A1) holds.\\
Because the $Z_{i,j,l}$ are bounded by $K$, an application of Theorem 1.1 of \cite{Zaitsev:1987} gives
\begin{align}
	&\P \Big(\min_{j\in \{2,3\}} \frac{1}{\sqrt n} \sum_{l=1}^n Z_{1,j,l} >x_n\Big)\leq \P(\min_{j\in \{1,2\}} N_j>x_n-\lambda_n)+ c_1\exp\Big(-c_2\frac{\sqrt{n}\lambda_n}{K}\Big),\label{eq:b2}
\end{align}
where $\lambda_n=(\log p)^{-1/2}$ and $N_1,N_2$ are standard normal variables with $\Cov(N_1,N_2)=\rho$.
For $p= \exp(o( n^{\frac{1}{3}}))$, the exponential term 
$p^3\exp\big(-c_2\sqrt{n}\lambda_n/K\big)$ converges to zero.
Finally, we see that $p^3\, \P\big(\min_{j\in \{1,2\}} N_j>x_n-\lambda_n \big)\to 0$ by Lemma \ref{lem:minN}, finishing the proof of (A2) under condition (C4).

\subsection{An auxiliary result}
The following lemma is needed in the proof of Theorem \ref{thm:summax}.
\begin{lemma}\label{lem:minN}
	Let $Z,Z_1,Z_2$ be identically distributed random variables with $\E[Z]=0$, $\Var(Z)=1$ and $\Cov(Z_1,Z_2)=\rho\leq\frac{1}{3}$, which satisfy one of the conditions {\rm (C1)}, {\rm (C3)} and {\rm (C4)}. Let
	$\hat{Z}_{j}:=Z_{j}\mathds{1}_{\lbrace|Z_{j}|\leq \tau_n\rbrace}-\E[Z\mathds{1}_{\lbrace|Z|\leq \tau_n\rbrace}]$ for $j\in\{1,2\}$, where 
	\begin{itemize}
		\item
		$\tau_n=n^{1/s}$, if {\rm(C1)} holds;
		\item
		$\tau_n=\Big(\frac{2}{\eta}\log (pn)\Big)^{1/r}$,  if {\rm(C3)} holds;
		\item
		$\tau_n=K$, if {\rm (C4)} holds;
	\end{itemize}
	and set
	\begin{align*}
		\rho_n&:=\Cov(\hat{Z}_{1},\hat{Z}_{2})=\rho-\E[Z_{1}Z_{2}\mathds{1}_{\lbrace \max_{j\in\{1,2\}}|Z_{j}|> \tau_n\}}]-\Big(\E[Z_{1}\mathds{1}_{\lbrace|Z_{1}|\leq \tau_n\rbrace}]\Big)^2.
	\end{align*}
	(Under {\rm (C4)} we have $\hat{Z}_j=Z_j$ and $\rho_n=\rho$.) For a normal distributed vector
	\begin{equation*}
		\begin{pmatrix} N_1 \\ N_2 \end{pmatrix}
		\sim \mathcal{N} \Big( \begin{pmatrix} 0 \\ 0 \end{pmatrix}, 
		\begin{pmatrix}
			\Var(\hat{Z_1}) & \rho_n\\
			\rho_n & \Var(\hat{Z_1})
		\end{pmatrix} \Big),
	\end{equation*}
	and 
	\begin{itemize}
		\item
		$p=O(\frac{s-2}{4})$, if {\rm(C1)} is valid,
		\item
		$p=\exp(o(n^{1/(3+2/r)}))$, if {\rm (C3)} is valid,
		\item
		$p=\exp(o(n^{1/3}))$, if {\rm(C4)} is valid,
	\end{itemize}
	it holds that
	\begin{align}\label{p3min}
		\lim_{\nto} p^3\, \P\Big(\min_{j\in \{1,2\}} N_j>x/d_n+d_n-\lambda_n \Big)&=0\,, \qquad x\in \R\,,
	\end{align}
	where $\lambda_n:=1/\sqrt{\log p}$ and $d_n$ is defined as in Theorem~\ref{thm:summax}.
\end{lemma}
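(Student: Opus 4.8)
First I would reduce the statement to estimating a standardized bivariate Gaussian tail. Since conditions {\rm(C1)}, {\rm(C3)} and {\rm(C4)} lie outside the exceptional case of Theorem~\ref{thm:summax}, we have $d_n=d_{n,1}$. Abbreviate $t_n:=x/d_{n,1}+d_{n,1}-\lambda_n$, $\sigma_n^2:=\Var(\hat Z_1)$, $r_n:=\rho_n/\sigma_n^2$ and $v_n:=t_n/\sigma_n$; then $(N_1/\sigma_n,N_2/\sigma_n)$ is a standard bivariate normal vector with correlation $r_n$, and
\[
\P\Big(\min_{j\in\{1,2\}}N_j>t_n\Big)=\P\Big(\frac{N_1}{\sigma_n}>v_n,\ \frac{N_2}{\sigma_n}>v_n\Big),
\]
so the claim is that $p^3$ times this joint Gaussian tail tends to $0$. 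The plan is to combine three ingredients: (i) a sharp upper bound for the joint tail of a bivariate Gaussian; (ii) truncation estimates showing $\sigma_n^2=1-o(1/\log p)$ and $r_n=\rho+o(1/\log p)$ (recall $\rho\le1/3$); (iii) the precise size of $t_n^2$ coming from \eqref{dn1}.

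For (i) I would use the classical tail estimate for the bivariate normal (see also Appendix~\ref{sec:appendix}): there are $C,v_0>0$ such that for every $r\in[-1,1/2]$ and every $v\ge v_0$,
\[
\P(G_1>v,\,G_2>v)\le \frac{C}{v^{2}}\exp\Big(-\frac{v^{2}}{1+r}\Big),
\]
with $(G_1,G_2)$ standard bivariate normal of correlation $r$. The crucial feature is the prefactor $v^{-2}$: when $\rho<1/3$ even the crude bound $\P(\min(G_1,G_2)>v)\le\ov\Phi\big(v\sqrt{2/(1+r)}\big)$ — as already used under {\rm(C2)} — would suffice, since the exponential factor alone beats $p^3$; but at the threshold $\rho=1/3$ it is precisely the $v^{-2}$ (rather than $v^{-1}$) that is needed. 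For (iii), a direct computation with $\lambda_n=1/\sqrt{\log p}$, $\log\tilde p=2\log p-\log2+o(1)$ and \eqref{dn1} gives $d_{n,1}\lambda_n\to2$ and, as $\nto$,
\[
t_n^{2}=2\log\tilde p-\log\log\tilde p-\log4\pi+2x-4+o(1),
\]
in particular $t_n^2\sim2\log\tilde p=O(\log p)$. For (ii), case {\rm(C4)} is immediate as $\hat Z_j=Z_j$, $\sigma_n=1$, $r_n=\rho=1/3$; under {\rm(C1)} and {\rm(C3)} one checks that $\E[Z^2\1_{\{|Z|>\tau_n\}}]$ and $\E[|Z_1Z_2|\1_{\{\max_j|Z_j|>\tau_n\}}]$ are $o(1/\log p)$, the latter via $2|Z_1Z_2|\le Z_1^2+Z_2^2$ together with a further split at $\{|Z|>\sqrt{\tau_n}\}$ so as to avoid needing a finite fourth moment under {\rm(C1)}; here one uses that $p=O(n^{(s-2)/4})$ forces $\log p=O(\log n)$ under {\rm(C1)}, while $\tau_n^{r}\gtrsim\log(pn)$ under {\rm(C3)}. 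Consequently $\sigma_n\le1$, $\sigma_n^2=1-o(1/\log p)$, $r_n=\rho+o(1/\log p)$, and $r_n$ eventually lies in a compact subset of $(-1,1)$.

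Assembling: since $\sigma_n\le1$ and $t_n^2=O(\log p)$,
\[
\frac{v_n^{2}}{1+r_n}=\frac{t_n^{2}}{\sigma_n^{2}(1+r_n)}=\frac{t_n^{2}}{1+\rho}\big(1+o(1/\log p)\big)=\frac{t_n^{2}}{1+\rho}+o(1),
\]
so by (i), $p^3\,\P(\min_j N_j>t_n)\le C\,p^3\,v_n^{-2}\exp\!\big(-t_n^2/(1+\rho)\big)$. If $\rho<1/3$ then $2/(1+\rho)>3/2$; using $t_n^2\sim2\log\tilde p$, $p^3=(2\tilde p)^{3/2}(1+o(1))$ and $v_n^2\sim2\log\tilde p$, the right-hand side is of order $\tilde p^{3/2-2/(1+\rho)+o(1)}$, which tends to $0$ at a polynomial rate. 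If $\rho=1/3$ (cases {\rm(C3)}, {\rm(C4)}), then $t_n^2/(1+\rho)=\tfrac34t_n^2=\tfrac32\log\tilde p-\tfrac34\log\log\tilde p+O(1)$, so
\[
p^3\,v_n^{-2}\exp\!\big(-\tfrac34t_n^2\big)\le C_x\big(p^3\tilde p^{-3/2}\big)\frac{(\log\tilde p)^{3/4}}{\log\tilde p}= C_x'\,(\log\tilde p)^{-1/4}\longrightarrow 0,
\]
which proves \eqref{p3min}. The one non-routine step is the sharp bivariate-Gaussian bound in (i): $\rho=1/3$ is exactly where $p^3$ balances the factor $\exp(-t_n^2/(1+\rho))\asymp\tilde p^{-3/2}$, so the estimate closes only thanks to the subpolynomial gain from the $v_n^{-2}$ prefactor — a $v_n^{-1}$ bound would leave a divergent factor $(\log\tilde p)^{1/4}$. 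Everything else is bookkeeping with the truncations and the explicit sequence $d_{n,1}$.
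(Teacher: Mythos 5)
Your proposal is correct and follows essentially the same route as the paper: the sharp bivariate normal tail bound with the crucial $v^{-2}$ prefactor (the paper's Lemma~\ref{lem:1}), the expansion $t_n^2=2\log\tilde p-\log\log\tilde p+O(1)$, and the observation that at the threshold $\rho=1/3$ the argument closes only through the resulting $(\log p)^{-1/4}$ gain. The only differences are organizational — you absorb the truncation-induced perturbation of the correlation directly into the exponent by showing it is $o(1/\log p)$ (with a sensible AM--GM/split argument that even patches a step the paper glosses over), whereas the paper routes the same estimate through the exponent $\beta_n=(1-3\rho_n)/(1+\rho_n)$ and a Slepian reduction to $\rho_n\ge 0$ — so this counts as the same proof.
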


\begin{proof}
	According to Lemma \ref{lem:2} it suffices to show \eqref{p3min} for $\rho_n\geq 0$. 
	Writing $x_n:=d_n+\frac{x}{d_n}$ for $x\in \R$, an application of Lemma~\ref{lem:1} gives
	\begin{align}
		\P\bigg(\min_{j\in \{1,2\}} N_j > x_n-\lambda_n \bigg)&=\P\bigg(\min_{j\in \{1,2\}} \frac{N_j}{\sqrt{\Var(N_1)}} >\frac{x_n-\lambda_n}{\sqrt{\Var{N_1}}} \bigg)\\
		&\sim \frac{(1+\rho_n)^{3/2}\Var(N_1)}{2\pi (1-\rho_n)^{1/2} (x_n-\lambda_n)^2} \exp\bigg(\frac{-(x_n-\lambda_n)^2}{(1+\rho_n)\Var(N_1)}\bigg)\\
		&\lesssim \frac{1}{ (x_n-\lambda_n)^2} \exp\bigg(\frac{-(x_n-\lambda_n)^2}{1+\rho_n}\bigg), \qquad n\to\infty.
	\end{align}
	By the definition of $d_n$, we have
	\begin{align*}
		(x_n-\lambda_n)^2=x_n^2+O(1)=d_n^2+O(1)= 4\log p-\log(2\log p)+O(1),\qquad n\to\infty.
	\end{align*}
	Hence, we conclude that
	\begin{align}
		\P\bigg(\min_{j\in \{1,2\}} N_j > x_n-\lambda_n \bigg)&\lesssim \frac{1}{ \log p}\exp\bigg(-\frac{4\log p - \log(2\log p) }{1+\rho_n}\bigg) \notag\\
		&\lesssim (\log p)^{-\frac{\rho_n}{1+\rho_n}} p^{-4/(1+\rho_n)} \,, \qquad \nto\,, \label{minnj}
	\end{align}
	which implies \eqref{p3min} provided that
	\begin{equation}\label{520}
		p^{-\beta_n}(\log p)^{-\frac{\rho_n}{1+\rho_n}} \to 0\,,\qquad n\to\infty\,,
	\end{equation}
	where $\beta_n=(1-3\rho_n)/(1+\rho_n)$.
	
	Obviously, the claim is true for $\beta_n\geq 0$. If $\beta_n<0$, we must have $\rho=1/3$ because from $\rho< 1/3$ it follows that $\rho_n<1/3$ for $n$ large enough, since $\rho_n$ converges to $\rho$. If  $\beta_n<0$, then $\rho_n>1/3$, and thus, $\E[Z_1Z_2\1_{\{\max_{j\in\{1,2\}}|Z_j|>\tau_n\}}]<0$. Therefore, we have
	\begin{align}
		|\beta_n|=\frac{|1-3\rho_n|}{1+\rho_n}&= \Big|\frac{3}{1+\rho_n}\big[\E[Z_{1}Z_{2}\mathds{1}_{\lbrace \max_{j\in\{1,2\}}|Z_{j}|> \tau_n\}}]+\big(\E[Z\mathds{1}_{\lbrace|Z|\leq \tau_n\rbrace}]\big)^2\big]\Big|\nonumber\\
		&\leq \Big|\frac{3}{1+\rho_n}\E[Z_{1}Z_{2}\mathds{1}_{\lbrace \max_{j\in\{1,2\}}|Z_{j}|> \tau_n\}}]\Big|.\label{beta}
	\end{align}
	
	Under {\rm (C1)}, \eqref{beta} is up to a positive constant bounded above by
	\begin{align*}
		&\E[ Z^2\mathds{1}_{\lbrace |Z|> \tau_n\}}]
		\leq \frac{\E[|Z|^s]}{\tau_n^{s-2}}\lesssim \tau_n^{-(s-2)}\,.
	\end{align*}
	For $p=O(n^{(s-2)/4})$ and $\tau_n=n^{1/s}$, we therefore have
	\begin{align*}
		p^{-\beta_n}\leq p^{C/n^{(s-2)/s}}= \exp\Big(\frac{C}{n^{(s-2)/s}} \,\log p\Big),
	\end{align*}
	which tends to $1$ as $n\to\infty$ since $s>2$.
	
	Under {\rm (C3)}, \eqref{beta} is up to a positive constant bounded above by
	\begin{align*}
		&\E[ Z^2\mathds{1}_{\lbrace |Z|> \tau_n\}}]\leq \frac{\E[|Z|^2(\exp(\eta|Z|^r ))^{1/4}]}{(\exp(\eta\tau_n^r))^{1/4}} \lesssim (\exp(\eta\tau_n^r))^{-1/4}.
	\end{align*}
	For $p=\exp(o(n^{(3+\frac{2}{r})^{-1}}))$ and $\tau_n=\Big(\frac{2}{\eta}\log (pn)\Big)^{1/r}$, we deduce that $p^{-\beta_n}\leq p^{C/(pn)^{1/2}}\to 1$.
	
	Finally, under {\rm(C4)}, one has $\beta_n=0$.
	This establishes \eqref{520} in all cases and completes the proof of the lemma.
\end{proof}

\section{Proofs of the remaining results}\label{sec:otherproofs}\setcounter{equation}{0}
\subsection{Proof of Theorem \ref{thm:interpoint}}
Set $Z_{i,j,l}=\frac{(X_{il}-X_{jl})^2-2}{\sqrt{2(\E[X^4]+1)}}$ for $1\leq l\leq n$ and $1\le i<j\le p$. Then we have
\beam
\frac{D_{ij}^{(2)}-2n}{\sqrt{2n(\E[X^4]+1)}} = \frac{1}{\sqrt{n}}\sum_{l=1}^n Z_{i,j,l}.
\eeam
By simple calculations one can check that
\beam
\Cov(Z_{i,j,1},Z_{r,s,1})=\E[Z_{i,j,1}Z_{r,s,1}]=\frac{\E[X^4]-1}{2(\E[X^4]+1)}\quad \text{for}\,\, |\{i,j,r,s\}|=3\,.
\eeam
Moreover, we observe that $\frac{\E[X^4]-1}{2(\E[X^4]+1)}\leq 1/3$ if $\E[X^4]\leq 5$. If $\E[X^6]<\infty$, one obtains for the third moment 
\begin{align*}
	\E[Z_{1,2,1}^3]=\frac{\E[X^6]+9\E[X^4]-10(\E[X^3])^2-10}{\sqrt{2}(\E[X^4]+1)^{3/2}}=\tilde\kappa\,.
\end{align*}  
Additionally, the conditions {\rm (C1)}--{\rm (C4)} for Theorem \ref{thm:summax} 
follow from {\rm (B1)}--{\rm (B4)}.
Recalling the notations $b_n^{(2)}$ and $c_n^{(2)}$ in Theorem~\ref{thm:interpoint}, 
we see that
\begin{equation*}
	\max_{1\le i <j\le p} c_n^{(2)} (D_{ij}^{(2)}-b_n^{(2)})=
	\max_{1\le i <j\le p} d_n \Big(\frac{1}{\sqrt{n}}\sum_{l=1}^n Z_{i,j,l}-d_n\Big)\,.
\end{equation*}
Finally, an application of Theorem \ref{thm:summax} 
establishes the claim of Theorem \ref{thm:interpoint}.

\subsection{Proof of Theorem \ref{cor:minimaxi} }
For $x,y\in\R$ we set $Q_{x,y}:=Q^{(n)}_{x,y}:=(d_n+x/d_n,\infty)\cup (-\infty,-d_n+y/d_n]$ and recall the notation $T_{ij}=n^{-1/2}\sum_{l=1}^n Z_{i,j,l}$. Additonally, we write $M_{(1)}:=\max_{1\leq i<j\leq p}T_{ij}$ and $M_{(\tilde{p})}:=\min_{1\leq i<j\leq p}T_{ij}$.
We have
\begin{align*}
	&\P(d_n(M_{(1)}-d_n)\leq x, d_n(M_{(\tilde{p})}+d_n\leq y)
	=\P(d_n(M_{(1)}-d_n)\leq x)-G_n(x,y)\,,
\end{align*}
where
\begin{align*}
	G_n(x,y)&:=\P(d_n(M_{(1)}-d_n)\leq x, d_n(M_{(\tilde{p})}+d_n>y) = \P\Big(\sum_{1\leq i<j\leq p}\mathds{1}_{Q_{x,y}}(T_{ij})=0\Big).
\end{align*}
It suffices to show that
\begin{align}\label{eq:dsgfsdd}
	\lim_{\nto} G_n(x,y)=\exp(-(\e^{-x}+\e^y))=\P(G\leq x)\P( G\leq -y),
\end{align}
which would imply
\begin{align*}
	\lim_{\nto} \P(d_n(M_{(1)}-d_n)\leq x, d_n(M_{(\tilde{p})}+d_n\leq y)
	&= \P(G\leq x)-\P(G\leq x)\P( G\leq -y)\\
	&=\P(G\leq x)\,\P( -G\leq y)\,.
\end{align*}
As in the proof of Theorem \ref{thm:summax}, 
we may replace the $Z_{i,j,l}$ by their truncated versions $\hat{Z}_{i,j,l}$ without changing the limit of $G_n(x,y)$. For simplicity we will from now on assume that $G_n(x,y)$ and $T_{ij}$ are defined as above but with $\hat{Z}_{i,j,l}$ instead $Z_{i,j,l}$.
As in Section \ref{prel} of the proof of Theorem \ref{thm:summax}, 
equation \eqref{eq:dsgfsdd} follows from
\begin{itemize}
	\item[(A1')] $\tilde{p}\, \P( T_{12}\in Q_{x,y}) \to \e^{-x}+\e^y$ and
	\item[(A2')] $\P(T_{12}\in Q_{x,y},T_{13}\in Q_{x,y})=o(p^{-3})$,
\end{itemize}
where $x,y\in \R$ and $\nto$.

We first consider assertion (A1'). For sufficiently large $n$ we have
\begin{align*}
	\P( T_{12}\in Q_{x,y})&=\P(T_{12}> d_n+x/d_n \,\,\text{or}\, -T_{12}\geq d_n-y/d_n)\\
	&=\P(T_{12}> d_n+x/d_n)+\P(-T_{12}\geq d_n-y/d_n)\,.
\end{align*}
Therefore, (A1') follows from (A1) in the proof of Theorem \ref{thm:summax}.\\
For (A2') and sufficiently large $n$ we get
\begin{align*}
	&\P(T_{12}\in Q_{x,y},T_{13}\in Q_{x,y})\\
	&=\P((T_{12}> d_n+x/d_n\,\, \text{or}\, -T_{12}\geq d_n-y/d_n),(T_{13}> d_n+x/d_n\,\, \text{or}\,-T_{13}\geq d_n-y/d_n))\\
	&=\P(T_{12}> d_n+x/d_n, T_{13}> d_n+x/d_n)+ 2\P(T_{12}> d_n+x/d_n, -T_{13}\geq d_n-y/d_n )\\
	&\quad +\P(-T_{12}\geq d_n-y/d_n, -T_{13}\geq d_n-y/d_n)
\end{align*}
The fact that the first and the last terms are $o(p^{-3})$ follows directly from (A2) in the proof of Theorem \ref{thm:summax}. 
The argument for the middle term is similar. This establishes (A2') and finishes the proof.  

\subsection{Proof of Proposition \ref{cor:opt}}

Since $d_n\sim 2\sqrt{\log p}$, as $\nto$, part (i) of Proposition \ref{cor:opt} 
is an immediate consequence of Theorem~\ref{thm:summax}.

We turn to the proof of (ii). If 
\begin{align*}
			\max_{1\leq i<j\leq p}\frac{1}{\sqrt{\log p}}\, T_{ij}\to 2
		\end{align*}
holds, we have for any constant $C_0>2$,
\begin{align*}
	\P\Big(\max_{1\leq i<j\leq p}\frac{1}{\sqrt{n\log p}}\sum_{l=1}^nZ_{i,j,l}\geq C_0\Big)\to 0, \qquad \nto\,.
\end{align*}
In view of the inequality $\max_{1\leq i<j\leq p}\sum_{l=1}^nZ_{i,j,l}\ge \max_{1\leq i< p/2}\sum_{l=1}^nZ_{i,[p/2]+i,l}$, where $[p/2]$ denotes the integer part of $p/2$, and because $\big\{\sum_{l=1}^nZ_{i,[p/2]+i,l},\,\, 1\leq i<p/2\big\}$ are iid random variables, we have
\begin{align*}
	&1-\left(1- \P\Big(\sum_{l=1}^nZ_{1,2,l}\geq C_0\sqrt{n\log p}\Big) \right)^{[p/2]} =
	\P\Big(\max_{1\leq i< p/2}\sum_{l=1}^nZ_{i,[p/2]+i,l}\geq C_0\sqrt{n\log p}\Big)\\
	&\le \P\Big(\max_{1\le i<j\le p}\sum_{l=1}^nZ_{i,j,l}\geq C_0\sqrt{n\log p}\Big)\to 0,\qquad n\to\infty.
\end{align*} 
We deduce that $p\,\P\big(\sum_{l=1}^nZ_{1,2,l}\geq C_0\sqrt{n\log p}\big)\to 0$ and since the same arguments hold for $-Z$, we obtain
\begin{align*}
	p\,\P\Big(\big|\sum_{l=1}^nZ_{1,2,l}\big|\geq C_0\sqrt{n\log p}\Big)\to 0,\qquad n\to\infty.
\end{align*}
Now, we set $D_n:=\{\big|\sum_{l=2}^n Z_{1,2,l}\big|<\sqrt{n}\}$. By the central limit theorem, we know that $\P(D_n)$ converges to $2\Phi(1)-1\geq 1/2$ as $n\to\infty$. 
We have 
\begin{align*}
	\bigg\{\frac{\big|\sum_{l=1}^nZ_{1,2,l}\big|}{\sqrt{n}}\geq C_0\sqrt{\log p} \bigg\} \cap D_n \supseteq 
	\bigg\{\frac{|Z_{1,2,l}| - \sqrt{n}}{\sqrt{n}} \geq C_0\sqrt{\log p} \bigg\} \cap D_n\,,
\end{align*}
where the two events on the \rhs\ are independent. 
Thus, we get for sufficiently large $n$
\begin{align}
	p\,\P\Big(\Big|\sum_{l=1}^nZ_{1,2,l}\Big|\geq C_0\sqrt{n\log p}\Big)&\ge p\,\P\Big(\Big|\sum_{l=1}^nZ_{1,2,l}\Big|\geq C_0\sqrt{n\log p}, D_n\Big)\nonumber\\
	&\geq p\,\P(D_n)\P(|Z|-\sqrt{n}\geq C_0\sqrt{n\log p})\nonumber\\
	&\geq \frac{p}{2}\P(|Z|\geq (C_0+1)\sqrt{n\log p})\to 0,\qquad n\to\infty\label{rueck}
\end{align}
for any $p$ with $\log p=o(n^s)$.
Following the argument in \cite[p.632]{shao2014necessary}, this tail decay of $|Z|$ can be used to show that $\E\big[\exp\big(\eta|Z|^{\frac{2s}{1+s}}\big)\big]<\infty$ for some $\eta>0$. We omit details. 

\subsection{Proof of Proposition \ref{thm:rate}}

As in Section \ref{prel} in the proof of Theorem \ref{thm:summax}, 
we introduce $I=I^{(n)}= \{(i,j): 1\le i<j\le p\}$ and for $\alpha=(i,j) \in I$, we set $\eta_{\alpha}=  d_n\big(\frac{1}{\sqrt{n}}\sum_{l=1}^n Z_{i,j,l}-d_n\big)$. In addition, we define
\begin{equation*}
	B_{\alpha}= \{(k,l)\in I \,:\, |\{i,j,k,l\}| \leq 3\}\,,
\end{equation*}
and for $x\in \R$ we write
\begin{equation*}
	\begin{split}
		\lambda &:=\lambda^{(n)}:=\sum_{\alpha\in I} \P( \eta_{\alpha} > x). 
	\end{split}
\end{equation*}
Then we get by Lemma \ref{lem:poissonapprox} and the triangle inequality
\begin{align}
	&\Big|\P\Big(\max_{1\leq i<j\leq p}d_n\Big(\frac{1}{\sqrt{n}}\sum_{l=1}^n Z_{i,j,l}-d_n\Big)\leq x\Big)-\e^{-e^{-x}}\Big|\\
	&\leq (1+\lambda^{-1})(b_1+b_2+b_3)+|\e^{-\lambda}-\e^{-e^{-x}}|,\label{ineq:rate}
\end{align}
where
\begin{equation*}
	\begin{split}
		b_1&= \sum_{\alpha \in I} \sum_{\beta \in B_{\alpha}} \P(\eta_{\alpha} >x) \P(\eta_{\beta} >x)\,, \\
		b_2&= \sum_{\alpha \in I} \sum_{\beta \in B_{\alpha}\backslash \{\alpha\}} \P(\eta_{\alpha} >x, \eta_{\beta} >x)\,,\\
		b_3&= \sum_{\alpha \in I} \E\Big| \P(\eta_{\alpha}>x |\sigma(\eta_{\beta}, \beta \notin B_{\alpha})) -  \P(\eta_{\alpha} >x) \Big|\,.
	\end{split}
\end{equation*}

We already know that $b_3=0$ and, by \eqref{eq:b1}, $b_1=O(p^{-1})$. From \eqref{eq:b2} we get
\begin{align*}
	&b_2\leq p^3 \P(\min(N_1,N_2) >x_n-(\log p)^{-1/2})+ p^3 c_1\exp\Big(-c_2\sqrt{n(\log p)^{-1}}\Big),
\end{align*}
where $x_n=x/d_{n,1}+d_{n,1}$ and $N_1,N_2$ are standard normal variables with $\Cov(N_1,N_2)=\rho$. Since $p=\exp(o(n^{1/3}))$ we may write $p=\exp(\gamma_n n^{1/3})$, where $\gamma_n\to 0$ as $n\to \infty$. Hence, we obtain for the second term
\begin{align*}
	p^3 \exp\Big(-c_2\sqrt{n(\log p)^{-1}}\Big)&=\exp(3\gamma_n n^{1/3}-c_2n^{1/3}\gamma_n^{-1/2})\\
	&=\exp(\gamma_n n^{1/3}(3-c_2\gamma_n^{-3/2})= O( p^{-1}).
\end{align*}
For the first term we get by \eqref{minnj}
\begin{align*}
	p^3 \P(\min(N_1,N_2)>x_n-(\log p)^{-1/2})&\lesssim p^3\, (\log p)^{-\frac{\rho}{1+\rho}} p^{-4/(1+\rho)}\\
	&=O((\log p)^{-\rho/(1+\rho)}p^{-(1-3\rho)/(1+\rho)})\,, \qquad \nto\,.
\end{align*}
Therefore, the first term of \eqref{ineq:rate} is of the order $O((\log p)^{-\rho/(1+\rho)}p^{-(1-3\rho)/(1+\rho)})$.\\

Now, we consider the second term of \eqref{ineq:rate}. By the mean value theorem there exists a $y_n$ between $\lambda$ and $\e^{-x}$ such that
\begin{align}\label{ineq:2}
	|e^{-\lambda}-\e^{-e^{-x}}|=\e^{-y_n}|\lambda-\e^{-x}|\leq e^{-y_n}(|\lambda-\tilde{p}\,\bar{\Phi}(x_n)|+|\tilde{p}\,\bar{\Phi}(x_n)-\e^{-x}|).
\end{align}
We proceed by bounding the \rhs\ of \eqref{ineq:2}. For the first term we get by Lemma~\ref{lin}(iii)
\begin{align*}
	|\lambda-\tilde{p}\,\bar{\Phi}(x_n)|&=\tilde{p}\Big|\P\Big(\frac{1}{\sqrt{n}}\sum_{l=1}^n Z_{1,2,l}> x_n\Big)-\bar{\Phi}(x_n)\Big|\\
	&=\tilde{p}\,\bar{\Phi}(x_n)\Big|\exp\Big(\frac{x_n^3\E[Z^3]}{6\sqrt{n}}\Big)\Big[1+O\Big(\frac{1+x_n}{\sqrt{n}}\Big)\Big]-1\Big|.
\end{align*}
By the mean value theorem there exists a $\tilde{y}_n$ between $0$ and $x_n^3\E[Z^3]/(6\sqrt{n})$ with
\begin{align*}
	\Big|\exp\Big(\frac{x_n^3\E[Z^3]}{6\sqrt{n}}\Big)-1\Big|=\e^{\tilde{y}_n} \frac{x_n^3|\E[Z^3]|}{6\sqrt{n}}=O\Big(\sqrt{\frac{(\log p)^3}{n}}\Big)\,, \qquad \nto\,.
\end{align*} 
As $\tilde{p}\,\bar{\Phi}(x_n)\to \e^{-x}$ and $(1+x_n)n^{-1/2}=O(\sqrt{n^{-1}\log p})$, we get\\ $|\lambda-\tilde{p}\,\bar{\Phi}(x_n)| =O(\sqrt{n^{-1}(\log p)^3})$.
For the second term of \eqref{ineq:2} we have
\begin{align*}
	|\tilde{p}\,\bar{\Phi}(x_n)-\e^{-x}|\leq \tilde{p}\,\Big|\bar{\Phi}(x_n)-\frac{\phi(x_n)}{x_n}\Big|+\Big|\tilde{p}\,\frac{\phi(x_n)}{x_n}-\e^{-x}\Big|\,,
\end{align*}
where $\phi$ is the density of the standard normal distribution. 
Using the following classical inequality (e.g.~\cite{feller:1968}) for the tail of the standard normal distribution function 
\begin{equation*}
	\Big(\frac{1}{y}-\frac{1}{y^3}\big) \, \phi(y) \le \bar\Phi(y)\le \frac{1}{y} \,\phi(y)\,, \qquad y>0\,,
\end{equation*}
we deduce that
\begin{align*}
	\tilde{p}\,\Big|\bar{\Phi}(x_n)-\frac{\phi(x_n)}{x_n}\Big|\leq \tilde{p}\frac{\phi(x_n)}{x_n^3}=O((\log p)^{-1}).
\end{align*}
Additionally, we get by the definition of $d_{n,1}$ 
\begin{align*}
	\Big|\tilde{p}\,\frac{\phi(x_n)}{x_n}-\e^{-x}\Big|&\lesssim \bigg|\frac{\e^{-\frac{1}{2}(x^2/d_{n,1}^2+c_n)}\sqrt{\log \tilde{p}}}{x/d_{n,1}+d_{n,1}}-1\bigg|\lesssim \Big|\e^{-\frac{1}{2}(x^2/d_{n,1}^2+c_n)}-1\Big|,
\end{align*}
where $c_n:= (\log\log\tilde{p}+\log 4\pi)^{2}/(8\log\tilde{p})$.
By the mean value theorem there exists a $\hat{y}_n$ between $0$ and $-1/2(x^2/d_{n,1}^2+c_n)$ with
\begin{align*}
	\Big|\e^{-\frac{1}{2}(x^2/d_{n,1}^2+c_n)}-1\Big|=\Big(x^2/d_{n,1}^2+c_n\Big)\e^{\hat{y}_n}=O\Big(\frac{(\log(\log p))^{2}}{\log p}\Big).
\end{align*}
To summarize, the second term of \eqref{ineq:rate} is of order {\small $O(\sqrt{n^{-1}(\log p)^3})+O((\log p)^{-1}\log\log p)$}, which finishes the proof.

\subsection{Proof of Proposition \ref{prop:momentsv}}

We check that for $\rho\in [0,1/2]$, $\tilde{p}=p(p-1)/2$ and $x_n:=x/d_{n,1}+d_{n,1}$
\begin{align*}
	\E[V_{n,\rho}(x)]=\tilde{p}\, \P(Y_{12}>x_n)=\tilde{p}\,\bar{\Phi}(x_n) \to \e^{-x}
\end{align*} 
by the choice of $d_{n,1}$.
For the second moment we get
\begin{align*}
	\E[V_{n,\rho}^2(x)]&=\sum_{1\leq i_1<j_1\leq p}\sum_{1\leq i_2<j_2\leq p}\P(Y_{i_1j_1}>x_n, Y_{i_2j_2}>x_n)   \\
	&=\tilde{p}\,\bar{\Phi}(x_n)+ \tilde{p}\,\frac{(p-2)(p-3)}{2}\bar{\Phi}(x_n)^2+\tilde{p}\,(2p-4)\P(Y_{12}>x_n, Y_{13}>x_n)\,.
\end{align*}

The last expression is asymptotically equal to
\begin{align}
	&\tilde{p}\,\bar{\Phi}(x_n)+ \big(\tilde{p}\,\bar{\Phi}(x_n)\big)^2+p^3\P(Y_{12}>x_n, Y_{13}>x_n)\nonumber\\
	&\sim \e^{-x}+\e^{-2x}+\frac{(1+\rho)^{3/2}}{8\pi (1-\rho)^{1/2}}\, (\log p)^{-\rho/(1+\rho)}p^{(3\rho-1)/{(1+\rho)}}, \label{2mom}
\end{align}
where we applied Lemma \ref{lem:1} (see also the proof of Lemma \ref{lem:minN}) and the fact that $\tilde{p}\,\bar{\Phi}(x_n) \to \e^{-x}$ in the last step. The last term of \eqref{2mom} tends to zero for $\rho\in[0,1/3]$ and to infinity for $\rho\in(1/3,1/2]$.

\subsection{Proof of Theorem~\ref{thm:firstord}}

Let  $0\le \rho \le 1/2$.  We will provide an explicit construction of the field $(Y_{ij})$. Let $(\xi_i)_{i\ge 1}$ and $(\eta_j)_{j\ge 1}$ be two independent sequences of iid standard Gaussian random variables and let $(N_{ij})_{i,j\ge 1}$ be a field of iid standard Gaussians. It is easy to check that 
$$U_{ij}:=\sqrt{1-2\rho} N_{ij} + \sqrt{\rho}(\xi_i+\eta_j)\,,\qquad 1\le i <j\,,$$ 
are standard Gaussian random variables with the same covariance function as $(Y_{ij})$, i.e., $\Cov(Y_{ij},Y_{st})=\Cov(U_{ij},U_{st})$. Therefore we may assume that 
\begin{equation}\label{eq:defXij}
	Y_{ij}=Y_{ij}^{(\rho)}:=\sqrt{1-2\rho} N_{ij} + \sqrt{\rho}(\xi_i+\eta_j)\,,\qquad 1\le i <j\,,
\end{equation}
where superscript $(\rho)$ highlights the dependence on $\rho$. We will need the following notation
\begin{equation*}
	M_{n1}^{(\rho)}:=\max_{1\le i<j\le p} Y_{ij}^{(\rho)}  \qquad \text{ and } \qquad M_n^{(\rho)}:=\max_{1\le i\neq j\le n} Y_{ij}^{(\rho)}\,.
\end{equation*}
An important tool will be Slepian's lemma \cite{slepian:1962} (see also \cite[Corollary 4.2.3]{leadbetter2012extremes}): If $G,G'\in \R^n$ are centered Gaussian vectors with standarized entries, and if their correlation matrices $R,R'$ satisfy the entrywise inequality $R_{ij}\le R_{ij}'$ for all $1\le i,j\le p$, then it holds for all $t\in \R$ that
\begin{equation}\label{eq:slepian}
	\P\left( \max_{i=1,\ldots,p} G_i \le t \right) \le \P\left( \max_{i=1,\ldots,p} G_i' \le t \right)\,.
\end{equation}
An application of \eqref{eq:slepian} yields for $\rho \in[0,1/2]$
\begin{equation}\label{eq:applslepian}
	\P(M_{n1}^{(0)}\le t)\le \P(M_{n1}^{(\rho)}\le t) \le \P(M_{n1}^{(1/2)}\le t)\,,\qquad t\in \R\,.
\end{equation}
For $\vep>0$ it follows that
$$\P\left(\frac{M_{n1}^{(\rho)}}{\sqrt{\log p}}\ge 2+\vep\right) \le \P\left(\frac{M_{n1}^{(0)}}{\sqrt{\log p}}\ge 2+\vep\right)\,.$$
As $M_{n1}^{(0)}$ is a maximum of $\tilde{p}$ iid standard Gaussian random variables it is well-known from extreme value theory (see \cite{embrechts:kluppelberg:mikosch:1997}) that $M_{n1}^{(0)}/\sqrt{\log p}\cip 2$ from which we conclude 
\begin{equation}\label{eq:dgsdgd}
	\lim_{\nto} \P\left(\frac{M_{n1}^{(\rho)}}{\sqrt{\log p}}\ge 2+\vep\right) =0\,, \qquad \rho\in[0,1/2]\,.
\end{equation}
Applying \eqref{eq:slepian} and using the fact that $$M_n^{(1/2)}\ge \frac{1}{\sqrt{2}} \Big(\max_{1\le i \le p/2} \xi_i +  \max_{p/2+1\le j \le p} \eta_j\Big)$$ 
yields
\begin{align*}
	\P\left(\frac{M_{n1}^{(\rho)}}{\sqrt{\log p}}\le 2-\vep\right) &\le \P\left(\frac{M_{n1}^{(1/2)}}{\sqrt{\log p}}\le 2-\vep\right)\\
	&\le \P\left(\frac{ \max_{1\le i \le p/2} \xi_i +  \max_{p/2+1\le j \le p} \eta_j}{\sqrt{2} \,\sqrt{\log p}}\le 2-\vep\right)\,.
\end{align*}
Since $\max_{1\le i \le p/2} \xi_i /\sqrt{\log p} \cip \sqrt{2}$ and $\max_{p/2+1\le j \le p} \eta_j /\sqrt{\log p} \cip \sqrt{2}$ we obtain
\begin{equation*}
	\lim_{\nto} \P\left(\frac{M_{n1}^{(\rho)}}{\sqrt{\log p}}\le 2-\vep\right) =0\,, \qquad \rho\in[0,1/2]\,.
\end{equation*}
which in conjunction with \eqref{eq:dgsdgd} proves that 
\begin{align}\label{conv11}
	\max_{1\leq i<j\leq p}\frac{1}{\sqrt{\log p}}\, Y_{ij}\cip 2\,, \qquad \nto\,.
\end{align}
From \eqref{conv11} and Lemma 2.11 
it follows for all $\rho\in[0,1/2]$ that
\begin{align*}
	\max_{1\leq i<j\leq p}\frac{1}{\sqrt{\log p}}\, T_{ij}\cip 2\,, \qquad \nto\,,
\end{align*}
establishing the desired result.

\section{Maximum interpoint distance between two samples}\label{sec:twosample}

	Instead of considering the largest interpoint distance between all possible combinations of points of one sample, we can also take a look at the largest distance between points of two different samples. Let $(\x_i)_{i\le p}$ and $(\y_j)_{j\leq p}$ be two iid sequences of $\R^n$-valued random vectors, which are independent from each other and whose components fulfill the standard conditions. We study the asymptotic distribution of the maximum of the modified interpoint distances,
	\begin{align*}
	\hat{D}_{ij}^{(2)}:=\Vert\x_i-\y_j\Vert_2^2,\quad 1\leq i,j\leq p.
	\end{align*}
	The distances $\hat{D}_{ij}^{(2)}, 1\leq i,j\leq p$ are not independent and for the correlations it holds that
	\begin{align*}
		\rho_1:=\Corr(\hat D_{ij}^{(2)},\hat D_{ik}^{(2)})&=\frac{\E[X^4]-1}{\E[X^4]+ \E[Y^4]+2}, \qquad i,j,k\leq p,\,j\neq k\,,\\
		\rho_2:=\Corr(\hat D_{ij}^{(2)},\hat D_{hj}^{(2)})&=\frac{\E[Y^4]-1}{\E[X^4]+ \E[Y^4]+2}, \qquad i, j, h\leq p,\,i\neq h.
	\end{align*}
	Additionally, we define the sequence of norming constants
	\begin{align*}
		\hat{d}_{n}:=\sqrt{2\log p^2} - \dfrac{\log\log p^2+\log 4\pi}{2(2\log p^2)^{1/2}}.
	\end{align*}
	To formulate an analogous result to Theorem \ref{thm:interpoint} 
 for the case of two samples, we need similar assumptions as (B1)-(B4).
	\begin{enumerate}
		\item[\rm (B1')]
		There exists $s>2$ \st\ $\E[|X|^{2s}(\log(|X|))^{s/2}]< \infty$ and $\E[|Y|^{2s}(\log(|Y|))^{s/2}]< \infty$. Additionally, $\rho_1\leq \frac{1}{3}$ and $\rho_2\leq \frac{1}{3}$.
		\item[\rm (B2')] 
		There exist constants $\eta>0$ and $0<r\leq 2/3$ \st\ $\E[\exp(\eta\,|X|^{2r})]<\infty$ and\linebreak $\E[\exp(\eta\,|Y|^{2r})]<\infty$. Additionally, $\rho_1< \frac{1}{3}$ and $\rho_2<\frac{1}{3}$.
	\end{enumerate}
\begin{theorem}\label{thm:interpoint2}
		Let $(\x_i)_{i\le p}$ and $(\y_j)_{j\leq p}$ be two iid sequences of $\R^n$-valued random vectors, which are independent from each other and whose components fulfill the standard conditions. Assume one of the conditions {\rm (B1')} or {\rm (B2')} on $X$ and $Y$ and that $p=p_n\to\infty$ satisfies
		\begin{itemize}
			\item
			$p=O(n^{(s-2)/4})$, if {\rm (B1')} holds.
			\item
			$p=\exp(o(n^{r/(2-r)}))$, if {\rm (B2')} holds.
			
		\end{itemize}
		Then we have 
		$$\max_{1\le i, j\le p} \hat{c}_n^{(2)} (\hat{D}_{ij}^{(2)}-\hat{b}_n^{(2)}) \cid G\,,$$
		where $G$ is standard Gumbel distributed. The sequences $(\hat{b}_n^{(2)})$ and $(\hat{c}_n^{(2)})$ are given by
		\beam
		\hat{b}_n^{(2)}:=2n+\sqrt{n(\E[X^4]+\E[Y^4]+2)}\,\hat{d}_n \quad\text{and}\quad \hat{c}_n^{(2)}:=\frac{\hat{d}_n}{\sqrt{n(\E[X^4]+\E[Y^4]+2)}}.
		\eeam
\end{theorem}

\begin{proof}
    The proof is similar to the proof of Theorem \ref{thm:summax}. To apply Lemma \ref{lem:poissonapprox} we make the following definitions. Let $\hat{I}=\{(i,j): 1\leq i,j\leq p\}$ be an index set and for every $\alpha =(i,j)\in\hat{I}$ set $\hat{\eta}_\alpha=\hat{c}_n^{(2)} (\hat{D}_{ij}^{(2)}-\hat{b}_n^{(2)})$ and
    \begin{align*}
        \hat{B}_\alpha:=\big\{(k,l)\in\hat{I}: k=i\,\,\, \text{or}\,\,\, l=j\big\}.
    \end{align*}
    Additionally, we set $\hat{\lambda}:=\sum_{\alpha\in \hat{I}} \P( \hat\eta_{\alpha} > x)$. Then, it follows by Lemma \ref{lem:poissonapprox} that
    \begin{align*}
        \max_{1\le i, j\le p} \hat{c}_n^{(2)} (\hat{D}_{ij}^{(2)}-\hat{b}_n^{(2)}) \cid G
    \end{align*}
    if the claims
    \begin{itemize}
        \item [(A1')] $p^2\P\Big( \hat{c}_n^{(2)} \big(\hat{D}_{11}^{(2)}-\hat{b}_n^{(2)}\big)>x\Big)\to \e^{-x}$,
        \item[(A2')] $\P\Big( \hat{c}_n^{(2)} \big(\hat{D}_{11}^{(2)}-\hat{b}_n^{(2)}\big)>x,  \hat{c}_n^{(2)} (\hat{D}_{12}^{(2)}-\hat{b}_n^{(2)}\big)>x\Big)=o(p^{-3})$ and
        \item[(A3')] $\P\Big( \hat{c}_n^{(2)} \big(\hat{D}_{11}^{(2)}-\hat{b}_n^{(2)}\big)>x,  \hat{c}_n^{(2)} \big(\hat{D}_{21}^{(2)}-\hat{b}_n^{(2)}\big)>x\Big)=o(p^{-3})$,
    \end{itemize}
    where $x\in\R$ and $n\to\infty$, are fulfilled.
    Following the lines of the proof of (A1) in the proof of Theorem~\ref{thm:summax} under condition (C1) and (C2) we can show Assertion (A1'). Likewise we can derive (A2') and (A3') by following the lines of the proof of (A2). 
\end{proof}

\appendix
\section{Technical Tools}\setcounter{equation}{0}
\label{sec:appendix}
\subsection{Poisson approximation}
The first tool is a Poisson approximation, which can be found in Theorem 1 of \cite{arratia:goldstein:gordon:1989}.
\begin{lemma}\label{lem:poissonapprox2}
	Let $I$ be an index set and $\{B_{\alpha}, \alpha \in I \}$ be a set of subsets of $I$, that is, $B_{\alpha}\subset I$. Let also $\{ \theta_{\alpha}, \alpha\in I \}$ be Bernoulli random variables. Set $W=\sum_{\alpha\in I}\theta_\alpha$ and $\lambda =\E[W]= \sum_{\alpha \in I} \P(\theta_{\alpha} =1)\in (0,\infty)$. Then
	\begin{equation*}
		\Big | \P \Big( W=0 \Big) - \e^{-\lambda} \Big| \le (1\wedge\lambda^{-1}) (b_1+b_2+b_3)\,,
	\end{equation*}
	where
	\begin{equation*}
		\begin{split}
			b_1&= \sum_{\alpha \in I} \sum_{\beta \in B_{\alpha}} \P(\theta_\alpha=1) \P(\theta_{\beta} =1)\,, \\
			b_2&= \sum_{\alpha \in I} \sum_{\beta \in B_{\alpha}\backslash \{\alpha\}} \P(\theta_{\alpha} =1, \theta_{\beta} =1)\,,\\
			b_3&= \sum_{\alpha \in I} \E\Big| \P(\theta_{\alpha}=1 |\sigma(\theta_{\beta}, \beta \notin B_{\alpha})) -  \P(\theta_{\alpha} =1) \Big|\,,
		\end{split}
	\end{equation*}
	and $\sigma(\theta_{\beta}, \beta \notin B_{\alpha})$ is the $\sigma$-algebra generated by $\{\theta_{\beta}, \beta \notin B_{\alpha}\}$. In particular, if $\theta_{\alpha}$ is independent of $\{\theta_{\beta}, \beta \notin B_{\alpha}\}$ for each $\alpha$, then $b_3=0$.
\end{lemma}

The next result is a special case of Lemma \ref{lem:poissonapprox2} which is obtained by setting $\theta_{\alpha}=\1_{\{\eta_{\alpha} >t\}}$.
\begin{lemma}\label{lem:poissonapprox}
	Let $I$ be an index set and $\{B_{\alpha}, \alpha \in I \}$ be a set of subsets of $I$, that is, $B_{\alpha}\subset I$. Let also $\{ \eta_{\alpha}, \alpha\in I \}$ be random variables. For a given $t\in \R$, set $\lambda = \sum_{\alpha \in I} \P(\eta_{\alpha} >t)$. Then
	\begin{equation*}
		\Big | \P \Big( \max_{\alpha \in I} \eta_{\alpha}\le t \Big) - \e^{-\lambda} \Big| \le (1\wedge\lambda^{-1}) (b_1+b_2+b_3)\,,
	\end{equation*}
	where
	\begin{equation*}
		\begin{split}
			b_1&= \sum_{\alpha \in I} \sum_{\beta \in B_{\alpha}} \P(\eta_{\alpha} >t) \P(\eta_{\beta} >t)\,, \\
			b_2&= \sum_{\alpha \in I} \sum_{\beta \in B_{\alpha}\backslash \{\alpha\}} \P(\eta_{\alpha} >t, \eta_{\beta} >t)\,,\\
			b_3&= \sum_{\alpha \in I} \E\Big| \P(\eta_{\alpha}>t |\sigma(\eta_{\beta}, \beta \notin B_{\alpha})) -  \P(\eta_{\alpha} >t) \Big|\,,
		\end{split}
	\end{equation*}
	and $\sigma(\eta_{\beta}, \beta \notin B_{\alpha})$ is the $\sigma$-algebra generated by $\{\eta_{\beta}, \beta \notin B_{\alpha}\}$. In particular, if $\eta_{\alpha}$ is independent of $\{\eta_{\beta}, \beta \notin B_{\alpha}\}$ for each $\alpha$, then $b_3=0$.
\end{lemma}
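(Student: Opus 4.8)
The plan is to obtain Lemma~\ref{lem:poissonapprox} as the announced specialization of Lemma~\ref{lem:poissonapprox2}. Fix $t\in\R$ and set $\theta_\alpha:=\1_{\{\eta_\alpha>t\}}$ for $\alpha\in I$; these are Bernoulli random variables with $\P(\theta_\alpha=1)=\P(\eta_\alpha>t)$, so the parameter $\lambda=\sum_{\alpha\in I}\P(\theta_\alpha=1)$ of Lemma~\ref{lem:poissonapprox2} is exactly the $\lambda$ of the present statement. First I would dispose of the degenerate case $\lambda=0$: then $\eta_\alpha\le t$ almost surely for every $\alpha$, so $\P(\max_{\alpha\in I}\eta_\alpha\le t)=1=\e^{-\lambda}$ and all three error terms vanish, so the inequality is trivial; henceforth assume $0<\lambda<\infty$, as required by Lemma~\ref{lem:poissonapprox2}. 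Writing $W:=\sum_{\alpha\in I}\theta_\alpha$ one has the event identity $\{W=0\}=\{\eta_\alpha\le t\text{ for all }\alpha\in I\}=\{\max_{\alpha\in I}\eta_\alpha\le t\}$, so Lemma~\ref{lem:poissonapprox2} applied to the $\theta_\alpha$ already yields an estimate of $|\P(\max_{\alpha\in I}\eta_\alpha\le t)-\e^{-\lambda}|$ of the desired shape; it remains to compare the error terms.

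For that comparison I would observe that the quantities $b_1$ and $b_2$ of Lemma~\ref{lem:poissonapprox2} involve only $\P(\theta_\alpha=1)=\P(\eta_\alpha>t)$ and the joint probabilities $\P(\theta_\alpha=1,\theta_\beta=1)=\P(\eta_\alpha>t,\eta_\beta>t)$, and therefore literally equal the $b_1,b_2$ displayed in the statement. The only point requiring a word of care is $b_3$: Lemma~\ref{lem:poissonapprox2} conditions on $\sigma(\theta_\beta,\beta\notin B_\alpha)$, while the present statement conditions on the larger $\sigma$-algebra $\sigma(\eta_\beta,\beta\notin B_\alpha)\supseteq\sigma(\theta_\beta,\beta\notin B_\alpha)$. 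Here I would invoke the elementary monotonicity fact that for an event $A$ and $\sigma$-algebras $\mathcal G_1\subseteq\mathcal G_2$ one has $\E\,|\P(A\mid\mathcal G_1)-\P(A)|\le\E\,|\P(A\mid\mathcal G_2)-\P(A)|$, which follows from $\P(A\mid\mathcal G_1)=\E[\P(A\mid\mathcal G_2)\mid\mathcal G_1]$ and Jensen's inequality; with $A=\{\eta_\alpha>t\}$ this shows that the $b_3$ coming from Lemma~\ref{lem:poissonapprox2} is at most the $b_3$ in the statement. Combining this with the trivial bound $1\wedge\lambda^{-1}\le 1+\lambda^{-1}$ turns the conclusion of Lemma~\ref{lem:poissonapprox2} into the asserted inequality.

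It remains to address the ``in particular'' clause: if $\eta_\alpha$ is independent of $\{\eta_\beta:\beta\notin B_\alpha\}$, then $\P(\eta_\alpha>t\mid\sigma(\eta_\beta,\beta\notin B_\alpha))=\P(\eta_\alpha>t)$ almost surely, so every summand defining $b_3$ vanishes and $b_3=0$. I do not foresee any genuine obstacle: the proof is a routine specialization of Lemma~\ref{lem:poissonapprox2}, and the only substantive observations are the identity $\{W=0\}=\{\max_{\alpha\in I}\eta_\alpha\le t\}$ and the fact that enlarging the conditioning $\sigma$-algebra in $b_3$ can only increase that term.
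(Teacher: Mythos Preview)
Your proof is correct and follows exactly the paper's approach: the paper simply states that Lemma~\ref{lem:poissonapprox} is a special case of Lemma~\ref{lem:poissonapprox2} obtained by setting $\theta_\alpha=\1_{\{\eta_\alpha>t\}}$. You have carefully filled in the details the paper leaves implicit, namely the passage from $1\wedge\lambda^{-1}$ to $1+\lambda^{-1}$ and the fact that the $b_3$ in the statement conditions on the larger $\sigma$-algebra $\sigma(\eta_\beta,\beta\notin B_\alpha)$ rather than $\sigma(\theta_\beta,\beta\notin B_\alpha)$, which your Jensen argument handles cleanly.
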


\subsection{Large deviations and tails of multivariate Gaussian distribution}

The next lemma is due to \cite{petrov:1972} and \cite{linnik1961probability}. For the current formulation see also \cite[Lemma~3.2]{shao2014necessary}.
\begin{lemma}\label{lin}
	Let $X,X_1,X_2,\ldots$ be i.i.d. random variables with $\E[X]=0$ and $\E[X^2]=1$ and set $S_n:=\sum_{i=1}^n X_i$.
	\begin{enumerate}
		\renewcommand{\labelenumi}{(\roman{enumi})}
		\item If $\E[\exp(\eta|X|^r)]<\infty$ for some $0<r\leq 1$ and $\eta>0$, then
		\begin{align*}
			\lim\limits_{n\to\infty}\frac{1}{x_n^2}\log\P(S_n/\sqrt{n}\geq x_n)=-1/2
		\end{align*}
		for any $x_n\to\infty$, $x_n=o(n^{\frac{r}{2(2-r)}})$.
		\item  If $\E[\exp(\eta|X|^r)]<\infty$ for some $0<r\leq 1/2$ and $\eta>0$, then
		\begin{align*}
			\frac{\P(S_n/\sqrt{n}\geq x_n)}{1-\Phi(x_n)}\to 1,\qquad n\to\infty
		\end{align*}
		holds uniformly for $x_n= o(n^{\frac{r}{2(2-r)}})$, $x_n\ge 0$.
		\item  If $\E[\exp(\eta X)]<\infty$ for some $\eta>0$, then
		\begin{align*}
			\frac{\P(S_n/\sqrt{n}\geq x_n)}{1-\Phi(x_n)}=\exp\left(\frac{x_n^3\E[X^3]}{6n^{1/2}}\right)\left(1+O\left(\frac{1+x_n}{n^{1/2}}\right)\right)
		\end{align*}
		holds for $ x_n= o(n^{1/4})$, $x_n\geq 0$.
	\end{enumerate}
\end{lemma}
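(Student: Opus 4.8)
Lemma~\ref{lin} is a classical Cram\'er-type moderate-deviation statement and the plan is essentially to invoke \cite{petrov:1972,linnik1961probability} (see also \cite[Lemma~3.2]{shao2014necessary}); for completeness I sketch the structure of the argument. The core is part~(iii), which I would prove by the Esscher exponential change of measure. Since $\E[\e^{\eta X}]<\infty$ for some $\eta>0$, the cumulant generating function $\psi(h):=\log\E[\e^{hX}]$ is finite, smooth and strictly convex in a neighbourhood of $0$, with $\psi(0)=\psi'(0)=0$, $\psi''(0)=1$ and $\psi'''(0)=\E[X^3]$. For $0\le x_n=o(n^{1/4})$ pick the tilt $h_n$ solving $\psi'(h_n)=x_n/\sqrt n$; then $h_n\sim x_n/\sqrt n\to 0$ and an implicit-function expansion gives $n\psi(h_n)-h_n x_n\sqrt n=-\tfrac12 x_n^2+\tfrac{\E[X^3]}{6}\,\tfrac{x_n^3}{\sqrt n}+O\!\big(x_n^4/n\big)$, where the remainder is $o(1)$ precisely because $x_n=o(n^{1/4})$. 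Under the tilted law $\P_{h_n}$ the walk $S_n$ is centred at $x_n\sqrt n$ with variance $\sim n$, so a local/Edgeworth estimate for the standardized tilted walk gives, for $x_n\to\infty$,
\begin{align*}
\P(S_n/\sqrt n\ge x_n)=\e^{\,n\psi(h_n)-h_n x_n\sqrt n}\,\E_{h_n}\!\big[\e^{-h_n(S_n-x_n\sqrt n)}\1_{\{S_n\ge x_n\sqrt n\}}\big]=\frac{\e^{-x_n^2/2}}{x_n\sqrt{2\pi}}\,\e^{\E[X^3]x_n^3/(6\sqrt n)}\Big(1+O\big(\tfrac{1+x_n}{\sqrt n}\big)\Big)\,,
\end{align*}
and dividing by Mills' ratio $1-\Phi(x_n)$ yields (iii); for $x_n=O(1)$, part~(iii) is just the Berry--Esseen theorem.

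Parts (i) and (ii) I would reduce to (iii) by truncation, using that $\E[\e^{\eta|X|^r}]<\infty$ lets one excise the event that $\max_{i\le n}|X_i|$ exceeds a suitable level $y_n$ at cost at most $n\,\E[\e^{\eta|X|^r}]\e^{-\eta y_n^r}$, while the re-centred truncated summand has variance $\to 1$ and cumulants controllable through $y_n$. Balancing these exponents against the Gaussian tail produces exactly the stated ranges. In (ii), $r\le1/2$ forces $x_n=o(n^{r/(2(2-r))})\subseteq o(n^{1/6})$, so the Cram\'er correction $\exp(\E[X^3]x_n^3/(6\sqrt n))$ coming from (iii) tends to $1$ (and $\E[X^3]$ is finite because $|x|^3\lesssim\e^{\eta|x|^{1/2}}$), which leaves the pure Gaussian asymptotics $\P(S_n/\sqrt n\ge x_n)\sim 1-\Phi(x_n)$ uniformly over the range. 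In (i), $r\le1$ widens the range to $x_n=o(n^{r/(2(2-r))})\subseteq o(\sqrt n)$, where the Cram\'er corrections survive but are still of the form $\exp(o(x_n^2))$; combining the tilted lower bound with an optimized Chernoff upper bound then gives $\log\P(S_n/\sqrt n\ge x_n)=-\tfrac12 x_n^2(1+o(1))$.

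The step I expect to be the main obstacle is the truncation bookkeeping in the borderline regime: $y_n$ must be chosen large enough that $n\P(|X|>y_n)$ is negligible next to $1-\Phi(x_n)$ yet small enough that the truncated law's third cumulant does not spoil the Cram\'er factor --- a genuine tension that in places forces a multi-scale choice of $y_n$ together with a version of (iii) carrying explicit uniformity of the Edgeworth remainder across the $n$-dependent tilted laws. As these estimates are carried out in detail in \cite{petrov:1972} (Chapter~VIII) and \cite{linnik1961probability}, I would cite those rather than reproduce them.
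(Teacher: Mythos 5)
Your proposal is correct and takes essentially the same approach as the paper: the paper offers no proof of Lemma~\ref{lin} at all, simply attributing it to \cite{petrov:1972} and \cite{linnik1961probability} (with the present formulation as in \cite[Lemma~3.2]{shao2014necessary}), which is precisely the citation you ultimately rely on. Your Cram\'er-tilting expansion for part (iii) and the truncation reduction for parts (i)--(ii) are an accurate outline of those classical arguments, so nothing further is required.
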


Next, we study the tails of a multivariate Gaussian distribution. The following lemma is a direct consequence of Slepian's Lemma (see \cite[Corollary 4.2.3.]{leadbetter2012extremes}).

\ble \label{lem:2}
Let $X_1,...,X_d$ and $X'_1,...,X'_2$ be standard Gaussian random variables with
$\operatorname{Cov}(X'_i,X'_j)\leq\operatorname{Cov}(X_i,X_j)$ for each $i,j$. Then, for all $x\in \R$
\begin{align*}
	\P\Big( \min\limits_{i=1,...,d}X'_i > x\Big)\leq \P\Big( \min\limits_{i=1,...,d}X_i > x\Big).
\end{align*}
\ele


The case of equicorrelation plays a special role in the analysis of Gaussian tails as it contains the strongest possible dependencies given an upper bound $\rho$ on the correlations of the $X_i$. 
\begin{lemma} \label{lem:1}
	Let $(X_1,...,X_d)^{\top}$ be a centered Gaussian random vector with covariance matrix 
	\begin{align*}
		\Sigma_d:= (1-\rho)\bfI_d+\rho \mathbf{11^\top},
	\end{align*}
	where $\mathbf{1}=(1,...,1)^\top \in\R^d$ and $-1/(d-1)< \rho <1$. Then, as $t\to\infty$
	\begin{align}
		\P\bigg(\min\limits_{i=1,\ldots,d}X_i >t \bigg)\sim\frac{1}{(2\pi)^{d/2}|\Sigma_d|^{1/2}}\bigg(\frac{1+(d-1)\rho}{t}\bigg)^d\exp\bigg(\frac{-\frac{1}{2}t^2d}{1+(d-1)\rho}\bigg),
	\end{align}
	where $|\Sigma_d|=(-1)^{d-1}(\rho-1)^{d-1}((d-1)\rho+1)$.
	In particular, if $d=2$ we have
	\begin{align}
		\P\bigg(\min_{i=1,2} X_i >t \bigg)&\sim \frac{(1+\rho)^{3/2}}{2\pi (1-\rho)^{1/2} t^2} \exp\bigg(\frac{-t^2}{1+\rho}\bigg)\,.
	\end{align}
\end{lemma}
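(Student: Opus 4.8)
The plan is to evaluate the orthant probability directly from its Gaussian integral representation by a Laplace-type argument localized at the corner $t\mathbf 1$ of the shifted orthant $\{x\ge t\mathbf 1\}$. Set $\mu_d:=1+(d-1)\rho$. The key structural facts are that $\mathbf 1$ is an eigenvector of $\Sigma_d$ with eigenvalue $\mu_d$, while its orthogonal complement is the eigenspace for the eigenvalue $1-\rho$; since $-1/(d-1)<\rho<1$, both eigenvalues are positive, so $\Sigma_d$ is positive definite and, by the matrix determinant lemma, $|\Sigma_d|=(1-\rho)^{d-1}\mu_d$, which equals the stated expression because $(-1)^{d-1}(\rho-1)^{d-1}=(1-\rho)^{d-1}$. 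By the Sherman--Morrison formula,
$$\Sigma_d^{-1}=\frac{1}{1-\rho}\,\bfI_d-\frac{\rho}{(1-\rho)\mu_d}\,\mathbf 1\mathbf 1^{\top},$$
and in particular $\Sigma_d^{-1}\mathbf 1=\mu_d^{-1}\mathbf 1$ and $\mathbf 1^{\top}\Sigma_d^{-1}\mathbf 1=d/\mu_d$.

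First I would substitute $x=t\mathbf 1+y$ in
$$\P\Big(\min_{i\le d}X_i>t\Big)=\frac{1}{(2\pi)^{d/2}|\Sigma_d|^{1/2}}\int_{(t,\infty)^d}\exp\Big(-\tfrac12\,x^{\top}\Sigma_d^{-1}x\Big)\,dx$$
and expand the quadratic form. Using the two identities above,
$$x^{\top}\Sigma_d^{-1}x=\frac{d\,t^2}{\mu_d}+\frac{2t}{\mu_d}\sum_{i=1}^d y_i+y^{\top}\Sigma_d^{-1}y,$$
so that the exponential pulls out as $\exp\!\big(-d t^2/(2\mu_d)\big)$ --- which is already the exponential factor in the claim --- leaving
$$\P\Big(\min_{i\le d}X_i>t\Big)=\frac{\exp\!\big(-d t^2/(2\mu_d)\big)}{(2\pi)^{d/2}|\Sigma_d|^{1/2}}\;J(t),\qquad J(t):=\int_{[0,\infty)^d}\exp\Big(-\frac{t}{\mu_d}\sum_{i=1}^d y_i-\tfrac12\,y^{\top}\Sigma_d^{-1}y\Big)\,dy.$$

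Next I would show $J(t)\sim(\mu_d/t)^d$ as $t\to\infty$ by elementary two-sided bounds. Since $\Sigma_d^{-1}$ is positive semidefinite, dropping the quadratic term gives the upper bound $J(t)\le\int_{[0,\infty)^d}e^{-(t/\mu_d)\sum y_i}\,dy=(\mu_d/t)^d$. For a matching lower bound I would restrict the integral to $[0,\delta]^d$, where $y^{\top}\Sigma_d^{-1}y$ is at most a constant depending only on $d,\rho,\delta$ (and tending to $0$ as $\delta\to0$), evaluate the resulting product integral, let $t\to\infty$ with $\delta$ fixed, and then send $\delta\to0$; this yields $\liminf_{t\to\infty}(t/\mu_d)^d J(t)\ge1$. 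Substituting $J(t)\sim(\mu_d/t)^d$ back produces the general asymptotic, and the case $d=2$ follows by specializing with $\mu_2=1+\rho$ and $|\Sigma_2|=1-\rho^2$, simplifying $(1+\rho)^2/(1-\rho^2)^{1/2}=(1+\rho)^{3/2}/(1-\rho)^{1/2}$.

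I do not expect a genuine obstacle; the only point that needs a moment's care is the legitimacy of localizing at the corner, i.e.\ that $J(t)$ concentrates near $y=0$. This is transparent here because $\Sigma_d^{-1}\mathbf 1=\mu_d^{-1}\mathbf 1$ has strictly positive entries, so the linear part $\tfrac{t}{\mu_d}\sum_i y_i$ of the exponent is strictly increasing in each coordinate; equivalently, $x^{\top}\Sigma_d^{-1}x$ attains its minimum over the closed orthant $\{x\ge t\mathbf 1\}$ precisely at $x=t\mathbf 1$. Everything else is bookkeeping with the explicit form of $\Sigma_d^{-1}$.
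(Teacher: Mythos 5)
Your proposal is correct, and it proves the lemma by a different, self-contained route. The paper's proof simply invokes a known multivariate Gaussian tail asymptotic (Example 4 of \cite{MR2007795}) for $\P(\min_i X_i>t)$ and then reduces the work to computing $\Sigma_d^{-1}$ via Sherman--Morrison and the sum of its entries, $\sum_{i,j}\Sigma^{-1}_{d\,ij}=d/(1+(d-1)\rho)$, which produces the exponent; the polynomial prefactor and the determinant come packaged with the cited result. You instead derive the asymptotic from scratch: after the shift $x=t\mathbf{1}+y$ the eigenrelation $\Sigma_d^{-1}\mathbf{1}=\mu_d^{-1}\mathbf{1}$ splits off the Gaussian factor $\exp(-dt^2/(2\mu_d))$, and the two-sided bounds on $J(t)$ (dropping the positive quadratic form for the upper bound, restricting to $[0,\delta]^d$ and letting $t\to\infty$ then $\delta\to0$ for the lower bound) give $J(t)\sim(\mu_d/t)^d$, which is exactly the prefactor $((1+(d-1)\rho)/t)^d$. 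Your closing observation that $\Sigma_d^{-1}\mathbf{1}$ has strictly positive entries is precisely the condition under which the corner $t\mathbf{1}$ dominates, i.e.\ the hypothesis implicitly verified when the paper applies the cited example; in your argument it is not even needed as a separate check since the explicit bounds already close the proof. The determinant identity $|\Sigma_d|=(1-\rho)^{d-1}(1+(d-1)\rho)$ and the $d=2$ simplification $(1+\rho)^2/(1-\rho^2)^{1/2}=(1+\rho)^{3/2}/(1-\rho)^{1/2}$ are both correct. The trade-off: the paper's proof is shorter but rests on an external reference, while yours is elementary, fully self-contained, and makes transparent where each factor in the asymptotic comes from.
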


\begin{proof}
	Let $\langle \cdot,\cdot \rangle$ denote the Euclidean inner product. By Example 4 of \cite{MR2007795} we get for $t\in\R$ as $t\to\infty$
	\begin{align*}
		\P\bigg(\min\limits_{i=1,...,d}X_i>t\bigg)&\sim\frac{1}{(2\pi)^{d/2}\vert\Sigma_d\vert^{1/2}} \bigg(\frac{1+(d-1)\rho}{t}\bigg)^d \exp\bigg(\frac{-\langle t\mathbf{1},\Sigma_d^{-1}t\mathbf{1}\rangle}{2}\bigg)\\
		&=\frac{1}{(2\pi)^{d/2}\vert\Sigma_d\vert^{1/2}}\bigg(\frac{1+(d-1)\rho}{t}\bigg)^d\exp\bigg(-\frac{t^2\sum_{i,j=1}^d\Sigma_{dij}^{-1}}{2}\bigg).
	\end{align*}
	For the inverse of $\Sigma_d$ we get
	\begin{align*}
		\Sigma_d^{-1}=\frac{1}{1-\rho}\bfI_d-\frac{\rho}{(1-\rho)(1+(d-1)\rho)}\mathbf{11^\top},
	\end{align*}
	and therefore, the sum of the entries of  $\Sigma_d^{-1}$ equals
	\begin{align*}
		\sum_{i,j=1}^d\Sigma_{dij}^{-1}=\frac{d}{1-\rho}-\frac{d^2\rho}{(1-\rho)(1+(d-1)\rho)}=\frac{d}{1+(d-1)\rho}\,,
	\end{align*}
	which establishes the desired result.
\end{proof}

\bibliography{libraryjohannes}
\end{document}